\theoremstyle{plain}
\newtheorem{theorem}{Theorem}[section]
\newtheorem{corollary}[theorem]{Corollary}
\newtheorem{proposition}[theorem]{Proposition}
\newtheorem{lemma}[theorem]{Lemma}
{\theoremstyle{remark}

\newtheorem{remark}[theorem]{Remark}}
{\theoremstyle{definition}
\newtheorem{definition}[theorem]{Definition}
}
\newcommand{\N}{\mathbb{N}}
\newcommand{\Z}{\mathbb{Z}}
\newcommand{\C}{\mathbb{C}}
\newcommand{\T}{\mathbb{T}}
\newcommand{\cA}{{\mathcal A}}
\newcommand{\ip}[2]{\langle{#1},{#2}\rangle}
\newcommand{\Ca}{$C^*$-al\-ge\-bra }
\newcommand{\CA}{$C^*$-al\-ge\-bra}
\newcommand{\shom}{$*$-ho\-mo\-mor\-phism }
\newcommand{\rom}{\renewcommand{\labelenumi}{{\rm (\roman{enumi})}}%
\renewcommand{\itemsep}{0pt}}
\DeclareMathOperator{\id}{id}
\DeclareMathOperator{\Per}{P}
\DeclareMathOperator{\Orb}{Orb}
\begin{document}
\title[Ideal structure of $C^*$-algebras of SGDS]%
{Ideal structure of \boldmath{$C^*$}-algebras of singly generated dynamical systems}
\author[Takeshi KATSURA]{Takeshi KATSURA}
\address{Department of Mathematics, Keio University,
Yokohama, 223-8522 JAPAN}
\email{katsura@math.keio.ac.jp}

\subjclass[2000]{Primary 46L05; Secondary 46L55, 37B99}

\keywords{}

\begin{abstract}
In this paper, we show that the set of 
all ideals of the $C^*$-algebras of a singly generated dynamical system 
corresponds bijectively to 
the set of all subsets of 
the product of the space of the system and the circle 
 satisfying three conditions. 

\end{abstract}

\maketitle

\setcounter{section}{-1}

\section{Introduction}

In the theory of \CA s, 
it is very important yet very difficult to 
list all ideals of a given \CA . 
In this paper, we list all ideals of a \Ca of 
a singly generated dynamical system 
in terms of subsets of a certain space. 

In \cite{R2}, 
Renault introduces 
the notion of a singly generated dynamical system (SGDS) 
and associates a \Ca with it. 
In \cite{K1}, 
the author introduces 
the notion of a topological graph 
and associates a \Ca with it. 
In \cite{K2}, 
the author shows that an SGDS can be considered as 
a topological graph and their \CA s coincide. 
Conversely in \cite{Ksh} every \Ca of a topological graph 
is a \Ca of some SGDS (see also \cite{Yee} and \cite{KL}). 
Thus the class of \CA s of SGDSs is fairly large. 
Although in \cite{R2} spaces in SGDSs are assumed to be second countable, 
there is no such restriction in \cite{K1}. 
In this paper, spaces are not assumed to be second countable, 
and hence the associated \CA s are not necessarily separable. 

The purpose of this paper is to investigate the ideal structure 
of \CA s of SGDSs. 
Since we do not assume second countability, 
it is almost impossible to list primitive ideals of 
the associated \Ca in terms of points of the space. 
Instead in this paper, we list all ideals directly 
in terms of subsets of the product of the space and the circle 
(Theorem~\ref{MainThm}). 
The investigation here may be useful to describe 
the topology of primitive ideal space 
when the space is second countable 
(cf.\ \cite{SW}). 

This paper is organized as follows. 
In Section~\ref{Sec:C*S}, 
we introduce SGDSs and their \CA s. 
To define a \Ca of an SGDS, 
we do not use groupoids as in \cite{R2}, 
but use a definition similar to \cite{K1}. 
We examine the structure of a \Ca of an SGDS 
which we need later. 
In Section~\ref{Sec:irrep}, 
we construct lots of irreducible representations 
of a \Ca of an SGDS. 
In Section~\ref{Sec:gii}, 
we associate a subset of a space to each ideal, and an ideal to each subset. 
Then, we list all gauge-invariant ideals in terms of invariant subsets 
(Proposition~\ref{Prop:bij}). 
In Section~\ref{Sec:prime}, 
we list all prime ideals, and show that 
the set of primitive ideals constructed in Section~\ref{Sec:irrep} 
is sufficiently large (Corollary~\ref{Cor:inter}). 
In Section~\ref{Sec:IYI}, 
we associate a subset of the product of a space and the circle 
to each ideal, and an ideal to each subset. 
We show that ideals of the \Ca of an SGDS 
correspond injectively to a subset of the product of a space and the circle 
(Proposition~\ref{Prop:IYI}). 
In Section~\ref{Sec:YIad}, 
we show that this subset is an admissible set which is defined in 
Definition~\ref{Def:adm} (Proposition~\ref{Prop:YIinv}). 
In Section~\ref{Sec:YIY}, we show that the set of 
all ideals of the \Ca of an SGDS corresponds bijectively to 
the set of all admissible sets (Theorem~\ref{MainThm}). 

We try to make this paper self-contained as well as possible. 
However, in Section~\ref{Sec:prime} we need two kinds of 
uniqueness theorems for \CA s of topological graphs 
which are quoted in Appendix~\ref{Sec:UT}. 

\vspace{0.5cm}
\noindent
{\bfseries Acknowledgments.} 
This work was supported by JSPS KAKENHI Grant Number JP18K03345.

\section{Singly generated dynamical systems}\label{Sec:C*S}

\begin{definition}
A pair $\Sigma=(X,\sigma)$ is a 
{\em singly generated dynamical system} (in short SGDS) 
if $X$ is a locally compact space 
and $\sigma$ is a local homeomorphism 
from an open subset $U$ of $X$ to $X$. 
\end{definition}

It should be emphasized that $X$ is not necessarily second countable. 
Throughout this paper, 
$\Sigma=(X,\sigma)$ means an SGDS. 
We denote by $\N := \{0,1,2,\ldots\}$ the set of natural numbers. 

\begin{definition}
We define a decreasing sequence $\{U_n\}_{n\in\N}$ 
of open subsets of $X$ by $U_0=X$, $U_1=U$ and 
$U_{n+1}=\{x\in U_n\mid \sigma^{n}(x)\in U\}$. 
\end{definition}

The open set $U_n$ is the domain of $\sigma^{n}$, 
and $\sigma^{n}\colon U_n\to X$ is a local homeomorphism. 

\begin{definition}
For $\xi,\eta\in C_c(U)$, 
we define $\ip{\xi}{\eta}\in C_c(X)$ by 
\[
\ip{\xi}{\eta}(x)=\sum_{\sigma(y)=x}\overline{\xi}(y)\eta(y)
\]
for $x\in X$, 
where $\overline{\xi}\in C_c(U)$ 
is defined by $\overline{\xi}(y)=\overline{\xi(y)}$ for $y\in U$. 
\end{definition}

In the expression above, 
we implicitly assume that $y$ is in $U$. 
We use this convention. 
Namely if we write $\sigma^n(x)=\cdots$, 
we assume $x\in U_n$. 
The fact that $\ip{\xi}{\eta}$ is in $C_c(X)$ 
can be shown by using the fact that 
$\sigma$ is a local homeomorphism 
(see \cite[Lemma~1.5]{K1}). 

\begin{definition}\label{Def:C*S}
The \Ca $C^*(\Sigma)$ of an SGDS $\Sigma=(X,\sigma)$ 
is the universal \Ca generated by the images of 
a \shom $t^0\colon C_0(X)\to C^*(\Sigma)$ 
and a linear map $t^1\colon C_c(U)\to C^*(\Sigma)$ 
satisfying 
\begin{enumerate}
\rom
\item $t^1(\xi)^*t^1(\eta)=t^0(\ip{\xi}{\eta})$ for $\xi,\eta\in C_c(U)$, 
\item $t^1(\xi)t^1(\eta)^*=t^0(\xi\overline{\eta})$ for $\xi,\eta\in C_c(V)$ 
where $V\subset U$ is an open subset on which $\sigma$ is injective. 
\end{enumerate}
\end{definition}

We call the pair $(t^0,t^1)$ in Definition~\ref{Def:C*S} 
the {\em universal} pair for $C^*(\Sigma)$. 
Hereafter, 
we investigate properties of the universal pair $(t^0,t^1)$ for $C^*(\Sigma)$. 
For $f \in C_0(X)$ and $\xi \in C_c(U)$, 
we have $t^1(\xi)t^0(f)=t^1(\xi (f\circ \sigma))$ 
because one can see $d^*d=0$ for $d= t^1(\xi)t^0(f)-t^1(\xi (f\circ \sigma))$ 
by the computation 
\begin{align*}
t^1(\eta)^*\big(t^1(\xi)t^0(f)-t^1(\xi (f\circ \sigma))\big)
&=t^0(\ip{\eta}{\xi})t^0(f)-t^0(\ip{\eta}{\xi (f\circ \sigma)})\\
&=t^0\big(\ip{\eta}{\xi}f-\ip{\eta}{\xi (f\circ \sigma)})=0
\end{align*}
for arbitrary $\eta \in C_c(U)$. 
For $f \in C_0(X)$ and $\xi \in C_c(U)$, 
we have $t^0(f)t^1(\xi)=t^1(f \xi)$. 
To see this, first we may assume $\xi \in C_c(V)$ 
where $V\subset U$ is an open subset on which $\sigma$ is injective 
by the partition of unity 
because the fact that $\sigma$ is locally homeomorphic implies 
that the set of such $V$ covers $U$. 
Now for arbitrary $\eta \in C_c(V)$, we have 
\begin{align*}
\big(t^0(f)t^1(\xi) - t^1(f \xi)\big)t^1(\eta)^*
&= t^0(f)t^0(\xi\overline{\eta}) - t^0(f\xi\overline{\eta}) 
=0. 
\end{align*}
Therefore we get $dd^*=0$ for $d= t^0(f)t^1(\xi) - t^1(f \xi)$. 
We have shown that $t^0(f)t^1(\xi)=t^1(f \xi)$ 
for all $f \in C_0(X)$ and $\xi \in C_c(U)$. 
Finally we see that for $f \in C_c(U)$ 
there exist $\xi_1,\ldots ,\xi_n, \eta_1,\ldots ,\eta_n \in C_c(U)$ 
such that 
\begin{align*}
t^0(f)=\sum_{k=1}^n t^1(\xi_k)t^1(\eta_k)^*. 
\end{align*}
By the partition of unity, 
there exist $f_k \in C_c(V_k)$ for $k=1,\ldots, n$ 
such that $f= \sum_{k=1}^n f_k$ 
where $V_k\subset U$ is an open subset on which $\sigma$ is injective 
for $k=1,\ldots, n$. 
Then for $k=1,\ldots, n$ one can find $\xi_k,\eta_k \in C_c(V_k)$ 
such that $\xi_k\overline{\eta_k}=f_k$ 
(for example one can take $\eta_k=|f_k|^{1/2}$). 
Then we have 
\begin{align*}
t^0(f)=\sum_{k=1}^n t^0(f_k)=\sum_{k=1}^n t^1(\xi_k)t^1(\eta_k)^*. 
\end{align*}
From these computation, one can see that 
the \Ca $C^*(\Sigma)$ is the \Ca of the topological graph 
$E=(X,U,\sigma,\iota)$, 
where $\iota \colon U \to X$ is the embedding, 
as defined in \cite[Definition~2.10]{K1}. 
Thus by \cite[Proposition~10.9]{K2}, 
the \Ca $C^*(\Sigma)$ is isomorphic to the \Ca 
defined in \cite{R2} when $X$ is second countable. 

We try to make this paper self-contained as well as possible. 
We quote results in \cite{K1} which is used to prove 
the main theorem (Theorem~\ref{MainThm}) 
only in Appendix~\ref{Sec:UT}. 
We know that both $t^0$ and $t^1$ are injective 
by \cite[Proposition~3.7]{K1}. 
This fact is reproved in Lemma~\ref{Lem:inj}. 

\begin{definition}
Let $n\in \N$. 
For $\xi,\eta \in C_c(U_n)$, 
we define $\ip{\xi}{\eta}_n \in C_c(X)$ by
\[
\ip{\xi}{\eta}_n(x)=\sum_{\sigma^n(y)=x}\overline{\xi}(y)\eta(y)
\]
for $x\in X$. 
\end{definition}

Note that $\ip{\xi}{\eta}_0=\overline{\xi}\eta$ 
for $\xi,\eta \in C_c(X)$, and $\ip{\xi}{\eta}_1=\ip{\xi}{\eta}$ 
for $\xi,\eta \in C_c(U)$. 

\begin{lemma}\label{Lem:ip1}
Let $n,m\in \N$. 
For $\xi_1,\eta_1 \in C_c(U_n)$ and $\xi_2,\eta_2 \in C_c(U_m)$, 
define $\xi,\eta \in C_c(U_{n+m})$ 
by $\xi(x)=\xi_1(x)\xi_2(\sigma^n(x))$ and 
$\eta(x)=\eta_1(x)\eta_2(\sigma^n(x))$ for $x \in U_{n+m}$. 
Then we have
\[
\ip{\xi_2}{\ip{\xi_1}{\eta_1}_n\eta_2}_m
=\ip{\xi}{\eta}_{n+m}
\]
\end{lemma}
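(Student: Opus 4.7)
The plan is to unwind both sides of the claimed identity using the definition of the pairing and then recognize that summing over the preimage of $x$ under $\sigma^{n+m}$ is the same as iterating a sum over the preimage of $x$ under $\sigma^m$ followed by a sum over the preimage of each such point under $\sigma^n$. Concretely, I will first check that the objects $\xi,\eta\in C_c(U_{n+m})$ and $\ip{\xi_1}{\eta_1}_n\eta_2\in C_c(U_m)$ are well-defined, so that both pairings in the statement make sense; here one uses that $\ip{\xi_1}{\eta_1}_n\in C_c(X)$ (the higher-order analogue of the fact recalled after the definition of $\ip{\cdot}{\cdot}$, following \cite[Lemma~1.5]{K1}), together with $U_{n+m}=(\sigma^n)^{-1}(U_m)\subset U_n$ so that $x\mapsto \xi_2(\sigma^n(x))$ makes sense on $U_{n+m}$ and multiplication by the compactly supported $\xi_1$ yields a function in $C_c(U_{n+m})$.

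Next I carry out the direct computation. For $x\in X$,
\begin{align*}
\ip{\xi}{\eta}_{n+m}(x)
&=\sum_{\sigma^{n+m}(y)=x}\overline{\xi_1(y)}\,\overline{\xi_2(\sigma^n(y))}\,\eta_1(y)\,\eta_2(\sigma^n(y)).
\end{align*}
I then change the order of summation by setting $z=\sigma^n(y)$: every $y$ with $\sigma^{n+m}(y)=x$ gives a unique $z$ with $\sigma^m(z)=x$ and $\sigma^n(y)=z$, and conversely. Factoring out the terms depending only on $z$ gives
\begin{align*}
\ip{\xi}{\eta}_{n+m}(x)
&=\sum_{\sigma^m(z)=x}\overline{\xi_2(z)}\,\eta_2(z)\!\!\sum_{\sigma^n(y)=z}\!\!\overline{\xi_1(y)}\,\eta_1(y)\\
&=\sum_{\sigma^m(z)=x}\overline{\xi_2(z)}\,\ip{\xi_1}{\eta_1}_n(z)\,\eta_2(z)\\
&=\ip{\xi_2}{\ip{\xi_1}{\eta_1}_n\eta_2}_m(x),
\end{align*}
which is exactly the claim.

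There is no real obstacle: the lemma is an associativity/Fubini-type identity for the iterated pairings, and the content is only in correctly identifying the bijection between preimages under $\sigma^{n+m}$ and pairs of preimages under $\sigma^m$ and $\sigma^n$. The only point that requires minor care is ensuring the iterated sums are finite and the intermediate function $\ip{\xi_1}{\eta_1}_n\eta_2$ lies in $C_c(U_m)$, but this follows from the local homeomorphism property of $\sigma^n$ and the compact support hypotheses.
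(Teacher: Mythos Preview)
Your proof is correct and follows essentially the same approach as the paper: both unwind the definitions and use the bijection between $\{y:\sigma^{n+m}(y)=x\}$ and pairs $(z,y)$ with $\sigma^m(z)=x$, $\sigma^n(y)=z$. The only cosmetic difference is that the paper starts from the left-hand side and expands outward, whereas you start from the right-hand side; your additional remarks on well-definedness are fine but not needed for the paper's purposes.
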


\begin{proof}
For $x \in X$, we have 
\begin{align*}
\ip{\xi_2}{\ip{\xi_1}{\eta_1}_n\eta_2}_m(x)
&=\sum_{\sigma^m(y)=x}\overline{\xi_2}(y)\ip{\xi_1}{\eta_1}_n(y)\eta_2(y)\\
&=\sum_{\sigma^m(y)=x}\overline{\xi_2}(y)\Big(\sum_{\sigma^n(z)=y}\overline{\xi_1}(z)\eta_1(z)\Big)\eta_2(y)\\
&=\sum_{\sigma^m(y)=x}\sum_{\sigma^n(z)=y}
\overline{\xi_2}(\sigma^n(z))\overline{\xi_1}(z)\eta_1(z)\eta_2(\sigma^n(z))\\
&=\sum_{\sigma^{n+m}(z)=x}\overline{\xi}(z)\eta(z)\\
&=\ip{\xi}{\eta}_{n+m}(x). \qedhere
\end{align*}
\end{proof}

\begin{lemma}\label{Lem:ip2}
Let $n\in \N$. 
For $\xi_1,\ldots,\xi_n,\eta_1,\ldots,\eta_n \in C_c(U)$, 
define $\xi,\eta \in C_c(U_{n})$ 
by 
\begin{align*}
\xi(x)&=\xi_1(x)\xi_2(\sigma(x))\cdots \xi_n(\sigma^{n-1}(x))\\
\eta(x)&=\eta_1(x)\eta_2(\sigma(x))\cdots \eta_n(\sigma^{n-1}(x))
\end{align*}
for $x \in U_{n}$. 
Then we have
\[
\ip{\xi_n}{\ip{\xi_{n-1}}{\cdots ,\ip{\xi_2}{\ip{\xi_1}{\eta_1}\eta_2}
\cdots \eta_{n-1}}\eta_n}
=\ip{\xi}{\eta}_{n}
\]
\end{lemma}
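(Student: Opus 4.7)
I would prove the identity by induction on $n$, using Lemma~\ref{Lem:ip1} as the engine for the inductive step. The base case $n=1$ is the already-noted identity $\ip{\xi_1}{\eta_1}=\ip{\xi_1}{\eta_1}_1$ (the $n=0$ case would be vacuous). For an alternative direct approach, one could unfold the iterated inner product by substituting the definition of $\ip{\cdot}{\cdot}_k$ at each level and telescoping the resulting iterated sum into a single sum over preimages under $\sigma^{n}$, exactly as in the proof of Lemma~\ref{Lem:ip1}; but the induction is cleaner and reuses work already done.

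For the inductive step, assume the statement holds for $n-1$. Define $\xi',\eta'\in C_c(U_{n-1})$ by
\[
\xi'(x)=\xi_1(x)\xi_2(\sigma(x))\cdots\xi_{n-1}(\sigma^{n-2}(x)),
\qquad
\eta'(x)=\eta_1(x)\eta_2(\sigma(x))\cdots\eta_{n-1}(\sigma^{n-2}(x)).
\]
The inductive hypothesis applied to $\xi_1,\dots,\xi_{n-1},\eta_1,\dots,\eta_{n-1}$ gives
\[
\ip{\xi_{n-1}}{\cdots\ip{\xi_2}{\ip{\xi_1}{\eta_1}\eta_2}\cdots\eta_{n-1}}
=\ip{\xi'}{\eta'}_{n-1}.
\]
Substituting this into the iterated inner product of depth $n$ turns the left-hand side into
\[
\ip{\xi_n}{\ip{\xi'}{\eta'}_{n-1}\,\eta_n}.
\]

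Now I would apply Lemma~\ref{Lem:ip1} with its parameter $n$ replaced by $n-1$ and its parameter $m$ replaced by $1$, taking $\xi_1\mapsto\xi'$, $\eta_1\mapsto\eta'$, $\xi_2\mapsto\xi_n$, $\eta_2\mapsto\eta_n$. The conclusion of that lemma transforms the displayed expression into $\ip{\widetilde\xi}{\widetilde\eta}_n$, where
\[
\widetilde\xi(x)=\xi'(x)\,\xi_n(\sigma^{n-1}(x))
=\xi_1(x)\xi_2(\sigma(x))\cdots\xi_n(\sigma^{n-1}(x)),
\]
and analogously for $\widetilde\eta$. These are precisely the $\xi$ and $\eta$ of the lemma statement, so the induction closes.

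The main point to be careful about is not any deep technical issue but the bookkeeping of indices: one needs to check that the product $\xi_1(x)\cdots\xi_{n-1}(\sigma^{n-2}(x))$ living on $U_{n-1}$, composed via $\sigma^{n-1}$ with the single factor $\xi_n(\sigma^{n-1}(x))$, recovers the full product over $1,\dots,n$ on $U_n$. This is exactly the shape that the hypothesis of Lemma~\ref{Lem:ip1} demands, so the match is automatic and no genuine obstacle arises.
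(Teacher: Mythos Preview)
Your argument is correct and is exactly the paper's approach: the paper's proof is the single line ``Apply Lemma~\ref{Lem:ip1} $(n-1)$ times,'' and your induction on $n$ with Lemma~\ref{Lem:ip1} supplying the step from $n-1$ to $n$ is precisely that, written out in detail.
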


\begin{proof}
Apply Lemma~\ref{Lem:ip1} $(n-1)$ times. 
\end{proof}

\begin{lemma}\label{Lem:ttt}
Let $n\in \N$. 
For $\xi_1,\ldots,\xi_n,\eta_1,\ldots,\eta_n \in C_c(U)$, 
we have
\begin{align*}
\big(t^1(\xi_1)t^1(\xi_2)\cdots t^1(\xi_n)\big)^*&
\big(t^1(\eta_1)t^1(\eta_2)\cdots t^1(\eta_n)\big)\\
&=
t^0\big(\ip{\xi_n}{\ip{\xi_{n-1}}{\cdots ,\ip{\xi_2}{\ip{\xi_1}{\eta_1}\eta_2}
\cdots \eta_{n-1}}\eta_n}\big)
\end{align*}
\end{lemma}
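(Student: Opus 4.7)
The plan is to prove this by induction on $n$. The base case $n=1$ is precisely the defining relation (i) in Definition~\ref{Def:C*S}, namely $t^1(\xi_1)^* t^1(\eta_1) = t^0(\ip{\xi_1}{\eta_1})$.

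For the inductive step, I would write the left-hand side as
\[
t^1(\xi_n)^* \cdots t^1(\xi_2)^* \bigl(t^1(\xi_1)^* t^1(\eta_1)\bigr) t^1(\eta_2) \cdots t^1(\eta_n),
\]
apply the base case to the innermost pair to get $t^0(\ip{\xi_1}{\eta_1})$, and then use the identity $t^0(f)t^1(\eta_2) = t^1(f\eta_2)$ (established in the text just after Definition~\ref{Def:C*S}) with $f = \ip{\xi_1}{\eta_1} \in C_0(X)$ to absorb it. Since $\eta_2 \in C_c(U)$ and $\ip{\xi_1}{\eta_1} \in C_c(X)$, the product $\ip{\xi_1}{\eta_1}\eta_2$ lies in $C_c(U)$, so the expression becomes
\[
t^1(\xi_n)^* \cdots t^1(\xi_2)^* t^1(\ip{\xi_1}{\eta_1}\eta_2) t^1(\eta_3) \cdots t^1(\eta_n),
\]
which is of the form $\bigl(t^1(\xi_2)\cdots t^1(\xi_n)\bigr)^* \bigl(t^1(\zeta_2) t^1(\eta_3) \cdots t^1(\eta_n)\bigr)$ with $\zeta_2 := \ip{\xi_1}{\eta_1}\eta_2$, a product of $n-1$ factors on each side.

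Applying the inductive hypothesis to these $n-1$ factors yields
\[
t^0\bigl(\ip{\xi_n}{\ip{\xi_{n-1}}{\cdots \ip{\xi_3}{\ip{\xi_2}{\zeta_2}\eta_3}\cdots \eta_{n-1}}\eta_n}\bigr),
\]
and substituting $\zeta_2 = \ip{\xi_1}{\eta_1}\eta_2$ gives exactly the right-hand side of the lemma.

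There is no real obstacle here: the computation is essentially forced by the two relations $t^1(\xi)^*t^1(\eta) = t^0(\ip{\xi}{\eta})$ and $t^0(f)t^1(\eta) = t^1(f\eta)$ that have already been established, together with induction. The only point to be mildly careful about is tracking that at each step the new function $\ip{\xi_1}{\eta_1}\eta_2$ (or its higher analogue) indeed lies in $C_c(U)$ so that $t^1$ can be applied to it — but this follows immediately from the fact that pointwise products $C_0(X)\cdot C_c(U) \subset C_c(U)$.
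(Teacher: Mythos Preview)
Your proof is correct and is precisely the ``direct computation'' the paper alludes to; the induction using $t^1(\xi)^*t^1(\eta)=t^0(\ip{\xi}{\eta})$ together with $t^0(f)t^1(\eta)=t^1(f\eta)$ is exactly what is intended.
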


\begin{proof}
This follows from direct computation. 
\end{proof}

\begin{lemma}\label{Lem:fctn}
Let $n,m\in \N$. 
For $\xi \in C_c(U_{n+m})$, there exist
$\xi_1 \in C_c(U_n)$ and $\xi_2 \in C_c(U_m)$ 
such that 
$\xi(x)=\xi_1(x)\xi_2(\sigma^n(x))$ for $x \in U_{n+m}$. 
\end{lemma}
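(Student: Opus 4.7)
The plan is, given $\xi \in C_c(U_{n+m})$, simply to take $\xi_1 := \xi$ itself, viewed as an element of $C_c(U_n)$ via the open inclusion $U_{n+m}\subseteq U_n$, and to choose $\xi_2 \in C_c(U_m)$ to be identically $1$ on the compact set $\sigma^n(\supp\xi)\subseteq U_m$. The claimed identity $\xi(x)=\xi_1(x)\xi_2(\sigma^n(x))$ then holds trivially: on $\supp\xi$ the factor $\xi_2\circ\sigma^n$ equals $1$, and off $\supp\xi$ both sides vanish.

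The two set-theoretic ingredients that make this work are immediate from the definitions. First, $U_{n+m}\subseteq U_n$ by construction, so extension by zero realizes $C_c(U_{n+m})$ as a subset of $C_c(U_n)$; in particular $\xi_1=\xi$ is a legitimate element of $C_c(U_n)$. Second, by the definition of $U_{n+m}$ one has $\sigma^n(U_{n+m})\subseteq U_m$, so continuity of $\sigma^n$ on $U_n$ ensures that $K:=\sigma^n(\supp\xi)$ is a compact subset of the open set $U_m\subseteq X$.

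It remains only to produce $\xi_2 \in C_c(U_m)$ with $\xi_2\equiv 1$ on $K$, which is the standard Urysohn lemma for a compact subset of an open set inside a locally compact Hausdorff space; no dynamical input is needed for this step. The ``main obstacle'' is really just the conceptual one of resisting the temptation to seek a nontrivial factorization built from a local-homeomorphism partition of unity; one should instead let $\xi_1$ carry all the content of $\xi$ and use $\xi_2$ merely as a cutoff in the range variable. With that observation the argument is essentially a one-line application of Urysohn.
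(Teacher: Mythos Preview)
Your proof is correct and is essentially identical to the paper's own argument: the paper also takes $\xi_1=\xi$ and chooses $\xi_2\in C_c(U_m)$ equal to $1$ on the compact set $\sigma^n(\supp\xi)\subset U_m$. You have simply spelled out the justifications (the inclusion $U_{n+m}\subset U_n$, that $\sigma^n(\supp\xi)$ is compact in $U_m$, and the Urysohn step) in more detail than the paper does.
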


\begin{proof}
Take $\xi_1=\xi$, and choose 
$\xi_2 \in C_c(U_m)$ which is 1 on the compact set 
$\sigma^n(C) \subset U_m$ 
where $C \subset U_{n+m}$ is the compact support of $\xi$. 
\end{proof}

\begin{lemma}\label{Lem:fctn2}
Let $n\in \N$. 
For $\xi \in C_c(U_{n})$, there exist
$\xi_1,\xi_2, \ldots,\xi_n \in C_c(U)$ 
such that 
$\xi(x)=\xi_1(x)\xi_2(\sigma(x))\cdots \xi_n(\sigma^{n-1}(x))$
for $x \in U_{n}$. 
\end{lemma}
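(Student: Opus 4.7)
The plan is a straightforward induction on $n$, using Lemma~\ref{Lem:fctn} as the inductive step engine. The base case $n=1$ is trivial (take $\xi_1 = \xi$), and the base case $n=0$ is vacuous (or also trivial). For $n\geq 2$, I would apply Lemma~\ref{Lem:fctn} with the decomposition $n = 1 + (n-1)$: given $\xi \in C_c(U_n)$, this yields $\xi_1 \in C_c(U_1) = C_c(U)$ and $\eta \in C_c(U_{n-1})$ such that
\[
\xi(x) = \xi_1(x)\,\eta(\sigma(x)) \qquad \text{for } x\in U_n.
\]

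Next I would apply the inductive hypothesis to $\eta \in C_c(U_{n-1})$, obtaining $\xi_2, \ldots, \xi_n \in C_c(U)$ with
\[
\eta(y) = \xi_2(y)\,\xi_3(\sigma(y))\cdots \xi_n(\sigma^{n-2}(y)) \qquad \text{for } y\in U_{n-1}.
\]
Substituting $y = \sigma(x)$ (which lies in $U_{n-1}$ since $x \in U_n$) gives $\eta(\sigma(x)) = \xi_2(\sigma(x))\xi_3(\sigma^{2}(x))\cdots \xi_n(\sigma^{n-1}(x))$, so
\[
\xi(x) = \xi_1(x)\,\xi_2(\sigma(x))\cdots \xi_n(\sigma^{n-1}(x)),
\]
as required.

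There is no genuine obstacle here; the only thing to be mildly careful about is that the indices match up after reindexing under $y = \sigma(x)$, and that the tower $U_n \subset U_{n-1} \subset \cdots \subset U$ makes all the compositions $\sigma^k(x)$ well-defined for $x \in U_n$ and $0 \leq k \leq n-1$. Both are automatic from the definition of $U_n$ and the convention established after the definition of $\ip{\xi}{\eta}$.
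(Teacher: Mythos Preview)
Your proof is correct and is essentially the same as the paper's: the paper simply says ``Apply Lemma~\ref{Lem:fctn} $(n-1)$ times,'' which is exactly the induction you have spelled out. The only (harmless) difference is that you peel off the first factor via the decomposition $n=1+(n-1)$, whereas following the proof of Lemma~\ref{Lem:fctn} literally would peel off the last factor via $n=(n-1)+1$; both work identically.
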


\begin{proof}
Apply Lemma~\ref{Lem:fctn} $(n-1)$ times. 
\end{proof}

For each integer $n\geq 2$, 
we define a linear map $t^n\colon C_c(U_n)\to C^*(\Sigma)$. 
Take $\xi\in C_c(U_n)$. 
Take $\xi_1,\ldots,\xi_n\in C_c(U)$ 
such that 
\[
\xi(x)=\xi_1(x)\xi_2(\sigma(x))\cdots \xi_n(\sigma^{n-1}(x))
\] 
for all $x \in U_n$. 
Such functions exist by Lemma~\ref{Lem:fctn2}. 
We set $t^n(\xi)=t^1(\xi_1)t^1(\xi_2)\cdots t^1(\xi_n)$. 
This definition is well-defined 
because Lemma~\ref{Lem:ip2} and Lemma~\ref{Lem:ttt} imply $d^*d=0$ 
for the difference $d$ of two definitions. 
By the same reason, we see that $t^n$ is linear. 
We also see the following. 

\begin{lemma}\label{Lem:t*t}
For $\xi,\eta \in C_c(U_n)$, 
we have $t^n(\xi)^*t^n(\eta)=t^0(\ip{\xi}{\eta}_n)$. 
\end{lemma}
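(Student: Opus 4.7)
The plan is to reduce the statement immediately to the two preparatory lemmas that were proved precisely for this purpose, namely Lemma~\ref{Lem:ttt} (which computes products of the form $(t^1(\xi_1)\cdots t^1(\xi_n))^*(t^1(\eta_1)\cdots t^1(\eta_n))$) and Lemma~\ref{Lem:ip2} (which identifies iterated inner products with the single inner product $\ip{\cdot}{\cdot}_n$).

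First I would invoke Lemma~\ref{Lem:fctn2} to choose factorizations
\[
\xi(x)=\xi_1(x)\xi_2(\sigma(x))\cdots \xi_n(\sigma^{n-1}(x)),\qquad
\eta(x)=\eta_1(x)\eta_2(\sigma(x))\cdots \eta_n(\sigma^{n-1}(x))
\]
with $\xi_k,\eta_k\in C_c(U)$. By the definition of $t^n$ this gives $t^n(\xi)=t^1(\xi_1)\cdots t^1(\xi_n)$ and $t^n(\eta)=t^1(\eta_1)\cdots t^1(\eta_n)$, where well-definedness of $t^n$ (already established just before the statement via the same two lemmas) allows one to work with these particular factorizations without loss of generality.

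Next I would apply Lemma~\ref{Lem:ttt} directly to the product $t^n(\xi)^*t^n(\eta)$, obtaining
\[
t^n(\xi)^*t^n(\eta)=t^0\bigl(\ip{\xi_n}{\ip{\xi_{n-1}}{\cdots \ip{\xi_2}{\ip{\xi_1}{\eta_1}\eta_2}\cdots \eta_{n-1}}\eta_n}\bigr).
\]
Then, applying Lemma~\ref{Lem:ip2} to the argument on the right, the iterated inner product is exactly $\ip{\xi}{\eta}_n$. Composing the two steps yields $t^n(\xi)^*t^n(\eta)=t^0(\ip{\xi}{\eta}_n)$, as desired.

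There is essentially no genuine obstacle here; the lemma is engineered as a corollary of Lemmas~\ref{Lem:fctn2}, \ref{Lem:ttt}, and \ref{Lem:ip2}. The only minor point to keep straight is the bookkeeping between the particular factorizations of $\xi$ and $\eta$ used to evaluate $t^n$ and the intrinsic formula $\ip{\xi}{\eta}_n$, but since $t^n$ has been shown to be well-defined independently of the chosen factorization, once the chain of identities above is written down the conclusion is immediate.
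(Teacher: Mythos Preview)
Your proposal is correct and matches the paper's own proof, which simply says the result follows from Lemma~\ref{Lem:ip2} and Lemma~\ref{Lem:ttt}. You have spelled out the details (invoking Lemma~\ref{Lem:fctn2} to obtain the factorizations and noting that well-definedness of $t^n$ makes the choice immaterial), but the route is identical.
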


\begin{proof}
This follows from Lemma~\ref{Lem:ip2} and Lemma~\ref{Lem:ttt}. 
\end{proof}

\begin{lemma}\label{Lem:tt=t}
Let $n,m\in \N$. 
For $\xi_1 \in C_c(U_n)$ and $\xi_2 \in C_c(U_m)$, 
define $\xi \in C_c(U_{n+m})$ 
by $\xi(x)=\xi_1(x)\xi_2(\sigma^n(x))$ for $x \in U_{n+m}$. 
Then we have
\[
t^n(\xi_1)t^m(\xi_2)=t^{n+m}(\xi)
\]
\end{lemma}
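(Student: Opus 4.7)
The plan is to use the well-definedness of $t^n$, $t^m$, and $t^{n+m}$ to evaluate each of them on \emph{compatible} product decompositions, so that the identity falls out by direct inspection. The key observation is that if $\xi_1$ is presented as a product of $n$ factors from $C_c(U)$ and $\xi_2$ as a product of $m$ such factors, then concatenating them yields a length-$(n+m)$ product decomposition of $\xi$.

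I would first handle the main case $n,m\geq 1$. Apply Lemma~\ref{Lem:fctn2} to each of $\xi_1$ and $\xi_2$ to obtain $f_1,\ldots,f_n,g_1,\ldots,g_m\in C_c(U)$ with
\[
\xi_1(x)=f_1(x)f_2(\sigma(x))\cdots f_n(\sigma^{n-1}(x)), \qquad
\xi_2(y)=g_1(y)g_2(\sigma(y))\cdots g_m(\sigma^{m-1}(y)).
\]
Substituting $y=\sigma^n(x)$ and multiplying, the definition of $\xi$ gives
\[
\xi(x)=f_1(x)\cdots f_n(\sigma^{n-1}(x))\,g_1(\sigma^n(x))\cdots g_m(\sigma^{n+m-1}(x)),
\]
which is a length-$(n+m)$ product presentation of $\xi$ in the sense of Lemma~\ref{Lem:fctn2}. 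Because $t^{n+m}(\xi)$ is independent of the chosen presentation, I can compute it from this one, obtaining $t^{n+m}(\xi)=t^1(f_1)\cdots t^1(f_n)\,t^1(g_1)\cdots t^1(g_m)$, and the right-hand side factors as $t^n(\xi_1)\,t^m(\xi_2)$ by the definitions of $t^n$ and $t^m$.

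The boundary cases $n=0$ or $m=0$ fall slightly outside this pattern because $t^0$ is not presented through $t^1$. For $n=0$ I would absorb $\xi_1\in C_0(X)$ into the first factor of a decomposition of $\xi_2$ and invoke the identity $t^0(f)t^1(\eta)=t^1(f\eta)$ established in the body text, reducing $t^0(\xi_1)t^m(\xi_2)$ to $t^m(\xi_1\xi_2)$; for $m=0$ I would symmetrically absorb $\xi_2\in C_0(X)$ into the \emph{last} factor of a decomposition of $\xi_1$ using the companion identity $t^1(\xi)t^0(f)=t^1(\xi(f\circ\sigma))$. The only mild obstacle anywhere is keeping the indexing straight in these edge cases; the heart of the lemma is just that the well-definedness of $t^{n+m}$ allows us to pick the most convenient product presentation, namely the concatenation of those of $\xi_1$ and $\xi_2$.
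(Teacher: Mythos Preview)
Your proof is correct and follows essentially the same approach as the paper: for $n,m\geq 1$ the paper simply says the result ``follows from the definition of $t^n$,'' which is exactly your concatenation-of-decompositions argument exploiting well-definedness, and for the cases $n=0$ or $m=0$ the paper points back to the computations after Definition~\ref{Def:C*S}, whose outputs are precisely the identities $t^0(f)t^1(\eta)=t^1(f\eta)$ and $t^1(\xi)t^0(f)=t^1(\xi(f\circ\sigma))$ that you invoke.
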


\begin{proof}
When $n \geq 1$ and $m \geq 1$, 
this follows from the definition of $t^n$. 
When $n =0$ or $m=0$, 
this follows from a similar computation as 
the one after Definition~\ref{Def:C*S}. 
\end{proof}

\begin{lemma}\label{Lem:t^*t=t}
Let $n,m\in \N$. 
For $\xi \in C_c(U_n)$ and $\eta \in C_c(U_{n+m}) \subset C_c(U_n)$, 
we have 
\[
t^n(\xi)^*t^{n+m}(\eta)
=t^m(\ip{\xi}{\eta}_n).
\]
\end{lemma}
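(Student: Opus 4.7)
The plan is to reduce to Lemma~\ref{Lem:t*t} and Lemma~\ref{Lem:tt=t} after factoring $\eta$ across the join $U_{n+m} \to U_n \times_{\sigma^n} U_m$.

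First I would apply Lemma~\ref{Lem:fctn} to $\eta \in C_c(U_{n+m})$, obtaining $\eta_1 \in C_c(U_n)$ and $\eta_2 \in C_c(U_m)$ with $\eta(x) = \eta_1(x)\eta_2(\sigma^n(x))$ for $x \in U_{n+m}$. By Lemma~\ref{Lem:tt=t} this gives $t^{n+m}(\eta) = t^n(\eta_1)t^m(\eta_2)$, so
\[
t^n(\xi)^* t^{n+m}(\eta) = t^n(\xi)^* t^n(\eta_1)\, t^m(\eta_2) = t^0\big(\ip{\xi}{\eta_1}_n\big)\, t^m(\eta_2)
\]
by Lemma~\ref{Lem:t*t}. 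A direct computation of the inner product shows
\[
\ip{\xi}{\eta}_n(x) = \sum_{\sigma^n(y)=x} \overline{\xi}(y)\eta_1(y)\eta_2(\sigma^n(y)) = \eta_2(x)\, \ip{\xi}{\eta_1}_n(x),
\]
so the problem reduces to proving the module-type identity $t^0(f)\, t^m(\zeta) = t^m(f\zeta)$ for $f \in C_0(X)$ and $\zeta \in C_c(U_m)$, applied with $f = \ip{\xi}{\eta_1}_n$ and $\zeta = \eta_2$.

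This module identity is the main technical step, and it is the analog for $t^m$ of the identity $t^0(f)t^1(\xi)=t^1(f\xi)$ established right after Definition~\ref{Def:C*S}. I would prove it by using Lemma~\ref{Lem:fctn2} to factor $\zeta(x) = \zeta_1(x)\zeta_2(\sigma(x))\cdots \zeta_m(\sigma^{m-1}(x))$ with $\zeta_k \in C_c(U)$, so that $t^m(\zeta) = t^1(\zeta_1)\cdots t^1(\zeta_m)$ by definition. Then
\[
t^0(f)t^m(\zeta) = t^0(f)t^1(\zeta_1)\cdots t^1(\zeta_m) = t^1(f\zeta_1)t^1(\zeta_2)\cdots t^1(\zeta_m),
\]
and since $(f\zeta)(x) = f(x)\zeta_1(x)\zeta_2(\sigma(x))\cdots \zeta_m(\sigma^{m-1}(x))$, the right-hand side equals $t^m(f\zeta)$ by the definition of $t^m$. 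The cases $m=0$ and $m=1$ are already available (the former from $t^0$ being a $*$-homomorphism, the latter from the text following Definition~\ref{Def:C*S}).

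Combining these, $t^0(\ip{\xi}{\eta_1}_n)\, t^m(\eta_2) = t^m(\ip{\xi}{\eta_1}_n \cdot \eta_2) = t^m(\ip{\xi}{\eta}_n)$, which completes the proof. The only genuine obstacle is verifying that the module identity $t^0(f)t^m(\zeta) = t^m(f\zeta)$ is independent of the particular factorization of $\zeta$ chosen; but this is automatic once one knows $t^m$ itself is well-defined as a linear map, since both sides depend only on $\zeta$.
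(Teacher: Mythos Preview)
Your proof is correct and follows essentially the same route as the paper: factor $\eta$ via Lemma~\ref{Lem:fctn}, apply Lemma~\ref{Lem:tt=t} and Lemma~\ref{Lem:t*t}, then use the module identity and the computation $\ip{\xi}{\eta}_n=\ip{\xi}{\eta_1}_n\,\eta_2$. The one redundancy is that your ``main technical step'' $t^0(f)t^m(\zeta)=t^m(f\zeta)$ is precisely the $n=0$ case of Lemma~\ref{Lem:tt=t}, which you have already invoked; the paper simply cites Lemma~\ref{Lem:tt=t} again at that point rather than reproving it from the factorization.
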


\begin{proof}
By Lemma~\ref{Lem:fctn}, 
choose $\eta_1 \in C_c(U_n)$ and $\eta_2 \in C_c(U_m)$ 
such that 
$\eta(x)=\eta_1(x)\eta_2(\sigma^n(x))$ for $x \in U_{n+m}$. 
By Lemma~\ref{Lem:tt=t} and Lemma~\ref{Lem:t*t}, we have 
\begin{align*}
t^n(\xi)^*t^{n+m}(\eta)
&=t^n(\xi)^*t^{n}(\eta_1)t^{m}(\eta_2)
=t^0(\ip{\xi}{\eta_1}_n)t^{m}(\eta_2)\\
&=t^m(\ip{\xi}{\eta_1}_n\eta_2) 
=t^m(\ip{\xi}{\eta_1(\eta_2\circ \sigma^n)}_n)=t^m(\ip{\xi}{\eta}_n). 
\qedhere
\end{align*}
\end{proof}

\begin{lemma}\label{Lem:cspa}
The \Ca $C^*(\Sigma)$ is the closure of the linear span of the set 
\[
\big\{t^n(\xi)t^m(\eta)^*\mid n,m\in\N, \xi\in C_c(U_n), \eta\in C_c(U_m)\}. 
\]
\end{lemma}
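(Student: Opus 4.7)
Write $B$ for the closure of the linear span of the set $\{t^n(\xi)t^m(\eta)^*\}$ displayed in the statement. The plan is to show that $B$ is a $C^*$-subalgebra of $C^*(\Sigma)$ containing the images $t^0(C_0(X))$ and $t^1(C_c(U))$; since these two sets generate $C^*(\Sigma)$ by its universal definition, this will force $B=C^*(\Sigma)$.

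That $B$ is closed under the involution is immediate: $(t^n(\xi)t^m(\eta)^*)^*=t^m(\eta)t^n(\xi)^*$ is again a spanning element. The crux is closure under multiplication, which reduces to showing that a ``middle factor'' of the form $t^m(\eta)^*t^{n'}(\xi')$ can be rewritten as a single $t^k(\zeta)$ or $t^k(\zeta)^*$. Lemma~\ref{Lem:t^*t=t} (and its adjoint) handles this in two cases. If $n'\geq m$, then $\xi'\in C_c(U_{n'})\subset C_c(U_m)$ and Lemma~\ref{Lem:t^*t=t} (with $(n,m)$ there equal to $(m,n'-m)$) gives $t^m(\eta)^*t^{n'}(\xi')=t^{n'-m}(\ip{\eta}{\xi'}_m)$; I then use Lemma~\ref{Lem:tt=t} to absorb this on the right of $t^n(\xi)$, producing a single spanning element $t^{n+n'-m}(\cdot)\,t^{m'}(\eta')^*$. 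If $n'<m$, I apply the same lemma to $t^{n'}(\xi')^*t^m(\eta)$ and take adjoints to obtain $t^m(\eta)^*t^{n'}(\xi')=t^{m-n'}(\ip{\xi'}{\eta}_{n'})^*$, which I then combine with $t^{m'}(\eta')^*$ by taking an adjoint and applying Lemma~\ref{Lem:tt=t} once more, yielding a single spanning element $t^n(\xi)\,t^{m'+m-n'}(\cdot)^*$. Thus $B$ is a $*$-subalgebra.

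It remains to verify that $B$ contains the generators. Given $\xi\in C_c(U)$, I pick a real-valued $f\in C_c(X)$ which equals $1$ on the compact set $\sigma(\supp\xi)$; the identity $t^1(\xi)t^0(f)=t^1(\xi(f\circ\sigma))$ established just after Definition~\ref{Def:C*S}, together with self-adjointness of $t^0(f)$, yields $t^1(\xi)=t^1(\xi)t^0(f)^*$, a spanning element with $m=0$. For $f\in C_c(X)=C_c(U_0)$, pick a real-valued $g\in C_c(X)$ that equals $1$ on $\supp f$, so that $t^0(f)=t^0(f)t^0(g)^*$ is a spanning element with $n=m=0$; density of $C_c(X)$ in $C_0(X)$ and continuity of $t^0$ then give $t^0(C_0(X))\subset B$. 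Combined with the previous paragraph, $B$ contains both generating families, hence $B=C^*(\Sigma)$.

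The main obstacle I anticipate is the middle-factor computation in the second step: one must keep straight which of $\eta$ and $\xi'$ plays the role of Lemma~\ref{Lem:t^*t=t}'s $\xi$ versus $\eta$ in each of the two cases, and verify that the resulting single $t^k$ factor really does recombine with the outer $t^n(\xi)$ or $t^{m'}(\eta')^*$ via Lemma~\ref{Lem:tt=t} into a single element of the spanning set. The remainder of the argument is routine bookkeeping with cutoff functions together with the fact that $t^0$ and $t^1$ generate $C^*(\Sigma)$.
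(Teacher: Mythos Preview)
Your proof is correct and follows the same approach as the paper's: show that the span is closed under multiplication via Lemmas~\ref{Lem:tt=t} and~\ref{Lem:t^*t=t}, and observe that it contains the generators. The paper compresses all of this into two sentences, whereas you spell out the case split $n'\geq m$ versus $n'<m$ for the middle factor and exhibit explicit cutoff functions to realize $t^0(f)$ and $t^1(\xi)$ as spanning elements---but the underlying argument is identical.
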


\begin{proof}
By Lemma~\ref{Lem:tt=t} and Lemma~\ref{Lem:t^*t=t}, 
the set above is closed under multiplication. 
This set contains the images of $t^0$ and $t^1$. 
Hence the closure of the linear span of the set 
is $C^*(\Sigma)$. 
\end{proof}

\section{Irreducible representations of $C^*(\Sigma)$}\label{Sec:irrep}

Take an SGDS $\Sigma=(X,\sigma)$. 
In this section, we construct irreducible representations $\pi_{x_0,\gamma}$ 
of the \Ca $C^*(\Sigma)$. 

\begin{definition}
For $x_0\in X$, we define the {\em orbit} of $x_0$ by 
\[
\Orb(x_0)=\{x\in X\mid \text{$\sigma^n(x)=\sigma^m(x_0)$ 
for some $n,m\in\N$}\}.
\]
\end{definition}

Note that for $x \in \Orb(x_0)$ we have $\Orb(x)=\Orb(x_0)$. 
Hence
two orbits $\Orb(x)$ and $\Orb(y)$ are either same or disjoint. 

\begin{definition}
For $x_0\in X$, 
let $H_{x_0}$ be the Hilbert space whose complete orthonormal 
system is given by $\{\delta_x\}_{x\in \Orb(x_0)}$. 
The inner product of $H_{x_0}$ 
is denoted by $\ip{\cdot}{\cdot}_{x_0}$ 
which is linear in the second variable. 
\end{definition}

\begin{definition}
For $(x_0,\gamma_0)\in X\times\T$, 
we define two maps 
$t^0_{(x_0,\gamma_0)}\colon C_0(X)\to B(H_{x_0})$ 
and $t^1_{(x_0,\gamma_0)}\colon C_c(U)\to B(H_{x_0})$ by 
\begin{align*}
t^0_{(x_0,\gamma_0)}(f)\delta_x&=f(x)\delta_x,& 
t^1_{(x_0,\gamma_0)}(\xi)\delta_x&=\gamma_0\sum_{\sigma(y)=x}\xi(y)\delta_y, 
\end{align*}
for $x\in \Orb(x_0)$. 
\end{definition}

Take $(x_0,\gamma_0)\in X\times\T$ and fix it for a while. 
It is routine to check that $t^0_{(x_0,\gamma_0)}$ is a well-defined \shom 
and $t^1_{(x_0,\gamma_0)}$ is a well-defined linear map. 

\begin{lemma}\label{Lem:adjoint}
For $\xi\in C_c(U)$ and $y\in \Orb(x_0)$, 
we have 
\[
t^1_{(x_0,\gamma_0)}(\xi)^*\delta_y
=\begin{cases}
\gamma_0^{-1}\overline{\xi}(y)\delta_{\sigma(y)}&\text{if $y\in U$}\\
0&\text{otherwise.}
\end{cases} 
\]
\end{lemma}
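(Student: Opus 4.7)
The plan is to verify the formula by computing the Fourier coefficients of $t^1_{(x_0,\gamma_0)}(\xi)^*\delta_y$ with respect to the orthonormal basis $\{\delta_x\}_{x\in\Orb(x_0)}$ of $H_{x_0}$. For each basis vector $\delta_x$, I would use the adjoint relation
\[
\ip{\delta_x}{t^1_{(x_0,\gamma_0)}(\xi)^*\delta_y}_{x_0}
=\overline{\ip{t^1_{(x_0,\gamma_0)}(\xi)\delta_x}{\delta_y}_{x_0}}
\]
and then substitute the defining formula $t^1_{(x_0,\gamma_0)}(\xi)\delta_x=\gamma_0\sum_{\sigma(z)=x}\xi(z)\delta_z$.

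By orthonormality, $\ip{\delta_z}{\delta_y}_{x_0}$ is nonzero only when $z=y$, so the sum collapses: the coefficient is nonzero precisely when $y$ itself appears as one of the $z$'s, that is when $y\in U$ and $x=\sigma(y)$, in which case it equals $\overline{\gamma_0\xi(y)}=\gamma_0^{-1}\overline{\xi}(y)$ (using $\gamma_0\in\T$). Reassembling the vector from these coefficients yields $0$ when $y\notin U$, and $\gamma_0^{-1}\overline{\xi}(y)\delta_{\sigma(y)}$ when $y\in U$, as required.

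There is no real obstacle here; the argument is purely a bookkeeping exercise with an orthonormal basis. The one minor thing to check is that the statement even makes sense: when $y\in U\cap\Orb(x_0)$, the point $\sigma(y)$ lies in $\Orb(x_0)$ by the very definition of orbit, so $\delta_{\sigma(y)}$ is genuinely a basis vector of $H_{x_0}$. Boundedness of $t^1_{(x_0,\gamma_0)}(\xi)$, needed merely to make the adjoint defined, has already been noted in the paragraph preceding the lemma.
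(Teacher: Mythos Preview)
Your approach is exactly the paper's: compute $\ip{\delta_x}{t^1_{(x_0,\gamma_0)}(\xi)^*\delta_y}_{x_0}$ for each basis vector $\delta_x$ via the adjoint relation and the defining formula for $t^1_{(x_0,\gamma_0)}(\xi)$, then read off the result. One small slip to fix: with the paper's convention that $\ip{\cdot}{\cdot}_{x_0}$ is linear in the \emph{second} variable, the adjoint identity is $\ip{\delta_x}{T^*\delta_y}_{x_0}=\ip{T\delta_x}{\delta_y}_{x_0}$ with no complex conjugate; you inserted an extra bar and then implicitly treated the inner product as linear in the first slot when evaluating $\ip{T\delta_x}{\delta_y}_{x_0}=\gamma_0\xi(y)$, and the two errors cancel to give the correct coefficient $\gamma_0^{-1}\overline{\xi}(y)$.
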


\begin{proof}
For $x,y\in \Orb(x_0)$, 
we have 
\begin{align*}
\ip{\delta_x}{t^1_{(x_0,\gamma_0)}(\xi)^*\delta_y}_{x_0}
&=\ip{t^1_{(x_0,\gamma_0)}(\xi)\delta_x}{\delta_y}_{x_0}\\
&=\bigg\langle 
\gamma_0\sum_{\sigma(y')=x}\xi(y')\delta_{y'},\delta_y\bigg\rangle_{x_0}\\
&=\sum_{\sigma(y')=x}\gamma_0^{-1}\overline{\xi(y')}
\ip{\delta_{y'}}{\delta_y}_{x_0}\\
&=\begin{cases}
\gamma_0^{-1}\overline{\xi}(y)& \text{if $x=\sigma(y)$,}\\
0 & \text{otherwise}.
\end{cases}
\end{align*}
This completes the proof. 
\end{proof}

\begin{proposition}
The pair 
$(t^0_{(x_0,\gamma_0)}, t^1_{(x_0,\gamma_0)})$ 
satisfies (i) and (ii) in Definition~\ref{Def:C*S}. 
\end{proposition}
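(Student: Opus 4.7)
The plan is to verify both identities by evaluating them on the orthonormal basis $\{\delta_x\}_{x\in\Orb(x_0)}$ of $H_{x_0}$; since each side is a bounded operator, pointwise agreement on this basis suffices. The main tool is the formula for $t^1_{(x_0,\gamma_0)}(\xi)^*$ provided by Lemma~\ref{Lem:adjoint}, together with the definitions of $t^0_{(x_0,\gamma_0)}$ and $t^1_{(x_0,\gamma_0)}$.

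For condition (i), I would apply $t^1_{(x_0,\gamma_0)}(\xi)^*t^1_{(x_0,\gamma_0)}(\eta)$ to $\delta_x$. First $t^1_{(x_0,\gamma_0)}(\eta)\delta_x$ gives $\gamma_0\sum_{\sigma(y)=x}\eta(y)\delta_y$, and then Lemma~\ref{Lem:adjoint} sends each $\delta_y$ (automatically in $U$ since it lies in the preimage of $x$) to $\gamma_0^{-1}\overline{\xi}(y)\delta_{\sigma(y)}=\gamma_0^{-1}\overline{\xi}(y)\delta_x$. The unitary scalars $\gamma_0$ and $\gamma_0^{-1}$ cancel, leaving $\sum_{\sigma(y)=x}\overline{\xi}(y)\eta(y)\,\delta_x=\ip{\xi}{\eta}(x)\delta_x$, which is exactly $t^0_{(x_0,\gamma_0)}(\ip{\xi}{\eta})\delta_x$.

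For condition (ii), the mild subtlety is the role of injectivity of $\sigma$ on $V$. Applying $t^1_{(x_0,\gamma_0)}(\eta)^*$ to $\delta_x$ yields $\gamma_0^{-1}\overline{\eta}(x)\delta_{\sigma(x)}$, with $\overline{\eta}(x)\ne 0$ only if $x\in V$; next applying $t^1_{(x_0,\gamma_0)}(\xi)$ produces $\overline{\eta}(x)\sum_{\sigma(y)=\sigma(x)}\xi(y)\delta_y$. Because $\xi$ is supported in $V$ and $\sigma$ is injective on $V$, the only surviving term in the sum is $y=x$, giving $\xi(x)\overline{\eta}(x)\delta_x=t^0_{(x_0,\gamma_0)}(\xi\overline{\eta})\delta_x$; if $x\notin V$ both sides already vanish. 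The hard point, such as it is, is precisely this collapse of the sum to a single term, which is where the hypothesis on $V$ is used; everything else is bookkeeping with the cancellation of $\gamma_0^{\pm 1}$, so the parameter $\gamma_0$ plays no role in verifying these two relations.
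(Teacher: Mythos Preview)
Your proof is correct and follows essentially the same approach as the paper's: both verify the two relations by evaluating on basis vectors $\delta_x$, using Lemma~\ref{Lem:adjoint} for the adjoint and the injectivity of $\sigma$ on $V$ to collapse the sum in part~(ii). The only minor difference is notational (the paper uses $y$ where you use $x$ in part~(ii)) and that the paper separates the case $y\in\Orb(x_0)\cap V$ from $y\notin V$ slightly more explicitly, whereas you fold the $x\notin U$ subcase into the observation that $\overline{\eta}(x)=0$ outside $V$.
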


\begin{proof}
Take $\xi,\eta \in C_c(U)$. 
For $x \in \Orb(x_0)$, 
we have
\begin{align*}
t^1_{(x_0,\gamma_0)}(\xi)^*t^1_{(x_0,\gamma_0)}(\eta)\delta_x
&=t^1_{(x_0,\gamma_0)}(\xi)^*
\Big(\gamma_0\sum_{\sigma(y)=x}\eta(y)\delta_y\Big)\\
&=\gamma_0\sum_{\sigma(y)=x}\eta(y)t^1_{(x_0,\gamma_0)}(\xi)^*\delta_y\\
&=\gamma_0\sum_{\sigma(y)=x}\eta(y)
\gamma_0^{-1}\overline{\xi}(y)\delta_{\sigma(y)}\\
&=\sum_{\sigma(y)=x}\overline{\xi}(y)\eta(y)\delta_{x}\\
&=\ip{\xi}{\eta}(x)\delta_{x}
\end{align*}
by Lemma \ref{Lem:adjoint}.
This shows $t^1_{(x_0,\gamma_0)}(\xi)^*t^1_{(x_0,\gamma_0)}(\eta) =
t^0_{(x_0,\gamma_0)}(\ip{\xi}{\eta})$. 
Now take an open subset $V\subset U$ on which $\sigma$ is injective, 
and take $\xi,\eta\in C_c(V)$. 
For $y \in \Orb(x_0) \cap V$, 
we have
\begin{align*}
t^1_{(x_0,\gamma_0)}(\xi)t^1_{(x_0,\gamma_0)}(\eta)^*\delta_y
&=t^1_{(x_0,\gamma_0)}(\xi)\big(
\gamma_0^{-1}\overline{\eta}(y)\delta_{\sigma(y)}\big)\\
&=\gamma_0^{-1}\overline{\eta}(y)
\gamma_0\sum_{\sigma(y')=\sigma(y)}\xi(y')\delta_{y'}\\
&=\overline{\eta}(y)\xi(y)\delta_{y}
\end{align*}
by Lemma \ref{Lem:adjoint}.
For $y \in \Orb(x_0)\setminus V$, 
the same equation holds because the both sides become $0$. 
Hence we get $t^1_{(x_0,\gamma_0)}(\xi)t^1_{(x_0,\gamma_0)}(\eta)^*
=t^0_{(x_0,\gamma_0)}(\xi\overline{\eta})$. 
\end{proof}

\begin{definition}
We denote by 
$\pi_{(x_0,\gamma_0)}\colon C^*(\Sigma)\to B(H_{x_0})$ 
the \shom induced by the pair $(t^0_{(x_0,\gamma_0)}, t^1_{(x_0,\gamma_0)})$. 
\end{definition}

\begin{definition}
For $(x_0,\gamma_0)\in X\times\T$, 
we set $P_{(x_0,\gamma_0)}=\ker\pi_{(x_0,\gamma_0)}$. 
\end{definition}

\begin{lemma}\label{Lem:inj}
Let $(t^0,t^1)$ be the universal pair for $C^*(\Sigma)$. 
Then both $t^0$ and $t^1$ are injective. 
\end{lemma}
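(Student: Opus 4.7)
The plan is to exploit the irreducible representations $\pi_{(x_0,\gamma_0)}$ just constructed in this section, which give us explicit nonzero operators to test $t^0$ and $t^1$ against. The injectivity of $t^0$ will come essentially for free from the pointwise action formula, and the injectivity of $t^1$ will then reduce to the injectivity of $t^0$ via the inner-product axiom (i) from Definition~\ref{Def:C*S}.

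First, for $t^0$, I would take any nonzero $f\in C_0(X)$ and pick a point $x_0\in X$ with $f(x_0)\neq 0$. Choosing any $\gamma_0\in\T$ and looking at the representation $\pi_{(x_0,\gamma_0)}$, the definition gives
\[
\pi_{(x_0,\gamma_0)}\big(t^0(f)\big)\delta_{x_0}
= t^0_{(x_0,\gamma_0)}(f)\delta_{x_0}
= f(x_0)\delta_{x_0}\neq 0,
\]
so $t^0(f)\neq 0$. Hence $t^0$ is injective.

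For $t^1$, I would use the $C^*$-identity together with axiom (i): for any $\xi\in C_c(U)$,
\[
\|t^1(\xi)\|^2 = \|t^1(\xi)^*t^1(\xi)\| = \|t^0(\ip{\xi}{\xi})\|.
\]
Since $t^0$ is already known to be injective, it suffices to show that $\ip{\xi}{\xi}\in C_c(X)$ is nonzero whenever $\xi$ is. But if $\xi(y)\neq 0$ for some $y\in U$, then setting $x=\sigma(y)$,
\[
\ip{\xi}{\xi}(x)=\sum_{\sigma(z)=x}|\xi(z)|^2\geq |\xi(y)|^2>0,
\]
so $\ip{\xi}{\xi}\neq 0$, giving $t^1(\xi)\neq 0$.

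I do not expect any real obstacle here: the whole point of constructing the family $\pi_{(x_0,\gamma_0)}$ before stating this lemma is precisely to have enough concrete representations to separate elements in the image of $t^0$. The only mild subtlety is that the argument for $t^1$ is not done directly via the representations (one could, but the sum defining $t^1_{(x_0,\gamma_0)}(\xi)\delta_x$ can be zero at individual points even when $\xi\neq 0$), and is instead reduced to the $t^0$ case through the relation $t^1(\xi)^*t^1(\xi)=t^0(\ip{\xi}{\xi})$, which is exactly what the $C^*$-identity is built for.
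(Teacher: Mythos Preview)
Your proof is correct and follows essentially the same approach as the paper: the paper also uses the representations $\pi_{(x,\gamma)}$ to show $t^0$ is injective (phrased as $\ker t^0\subset\bigcap_x\ker t^0_{(x,1)}=0$ rather than evaluating at a single point, but this is the same idea), and then reduces injectivity of $t^1$ to that of $t^0$ via $t^1(\xi)^*t^1(\xi)=t^0(\ip{\xi}{\xi})$. Your write-up even spells out the step $\ip{\xi}{\xi}=0\Rightarrow\xi=0$, which the paper leaves implicit.
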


\begin{proof}
For each $x \in X$, 
we have 
$\ker t^0_{(x,1)} = C_0(X\setminus \overline{\Orb(x)})$. 
Since $t^0_{(x,1)}=\pi_{(x,1)} \circ t^0$, 
we have $\ker t^0 \subset \ker t^0_{(x,1)}$. 
Hence we have
\begin{align*}
\ker t^0 \subset \bigcap_{x \in X}\ker t^0_{(x,1)}
=\bigcap_{x \in X} C_0(X\setminus \overline{\Orb(x)})=0. 
\end{align*}
This shows that $t^0$ is injective. 
Take $\xi \in C_c(U)$ with $t^1(\xi)=0$. 
Then we have $t^0(\ip{\xi}{\xi})=t^1(\xi)^*t^1(\xi)=0$. 
Since $t^0$ is injective, we have $\ip{\xi}{\xi}=0$. 
This shows $\xi = 0$. 
Therefore $t^1$ is injective. 
\end{proof}

We are going to see that the representation $\pi_{(x_0,\gamma_0)}$ 
is irreducible, and hence $P_{(x_0,\gamma_0)}$ 
is a primitive ideal of $C^*(\Sigma)$. 

\begin{lemma}\label{compute}
For $n\in\N$, $\xi\in C_0(U_n)$
and $x\in \Orb(x_0)$, 
we have 
\begin{align*}
\pi_{(x_0,\gamma_0)}(t^n(\xi))\delta_x
&=\gamma_0^n\sum_{\sigma^n(y)=x}\xi(y)\delta_{y}\\ 
\pi_{(x_0,\gamma_0)}(t^n(\xi))^*\delta_x
&=\begin{cases}
\gamma_0^{-n}\overline{\xi}(x)\delta_{\sigma^n(x)}& 
\text{if $x\in U_n$,}\\
0 & \text{otherwise.}
\end{cases}
\end{align*}
\end{lemma}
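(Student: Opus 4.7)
The plan is induction on $n$. Both base cases are immediate from the definitions: for $n=0$, the first identity is the definition of $t^0_{(x_0,\gamma_0)}$ as a multiplication operator (noting $U_0=X$ and $\sigma^0=\id$) and the adjoint formula follows from $t^0(\xi)^*=t^0(\overline{\xi})$; for $n=1$, the first identity is the definition of $t^1_{(x_0,\gamma_0)}$, and the second is exactly Lemma~\ref{Lem:adjoint}.

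For the inductive step, assume both formulas hold for some $n\ge 1$ and take $\xi\in C_c(U_{n+1})$. By Lemma~\ref{Lem:fctn} choose $\xi_1\in C_c(U_n)$ and $\xi_2\in C_c(U)$ with $\xi(z)=\xi_1(z)\xi_2(\sigma^n(z))$ for $z\in U_{n+1}$, so Lemma~\ref{Lem:tt=t} gives $t^{n+1}(\xi)=t^n(\xi_1)t^1(\xi_2)$. Applying $\pi_{(x_0,\gamma_0)}$ and the $n=1$ base case,
\[
\pi_{(x_0,\gamma_0)}(t^{n+1}(\xi))\delta_x
=\gamma_0\sum_{\sigma(y)=x}\xi_2(y)\,\pi_{(x_0,\gamma_0)}(t^n(\xi_1))\delta_y,
\]
and feeding each $\delta_y$ through the inductive hypothesis turns the right-hand side into a finite double sum over pairs $(y,z)$ with $\sigma(y)=x$ and $\sigma^n(z)=y$. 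Such pairs correspond bijectively to $z$ with $\sigma^{n+1}(z)=x$ via $y=\sigma^n(z)$, and on them $\xi_1(z)\xi_2(\sigma^n(z))=\xi(z)$, so the sum collapses to $\gamma_0^{n+1}\sum_{\sigma^{n+1}(z)=x}\xi(z)\delta_z$, as required. All sums are finite since $\sigma$ and $\sigma^n$ are local homeomorphisms and the relevant functions have compact support.

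The adjoint identity follows from the first by computing matrix coefficients: $\ip{\delta_y}{\pi_{(x_0,\gamma_0)}(t^n(\xi))^*\delta_x}_{x_0}=\ip{\pi_{(x_0,\gamma_0)}(t^n(\xi))\delta_y}{\delta_x}_{x_0}$, which by the formula just proved equals $\gamma_0^{-n}\overline{\xi(x)}$ exactly when $x\in U_n$ and $y=\sigma^n(x)$, and vanishes otherwise. Since the statement involves $\xi\in C_0(U_n)$ while $t^n$ is defined on $C_c(U_n)$, one extends by density using the bound $\|t^n(\xi)\|^2=\|t^0(\ip{\xi}{\xi}_n)\|\le\|\ip{\xi}{\xi}_n\|_\infty$ from Lemma~\ref{Lem:t*t}. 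There is no genuine obstacle; the argument is a bookkeeping induction whose only delicate point is keeping the factorisation $\sigma^{n+1}=\sigma\circ\sigma^n$ consistent with the decomposition of $\xi$ when collapsing the double sum into a single one.
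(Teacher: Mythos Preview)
Your induction is correct and is essentially the paper's own argument reorganised: the paper factors $\xi$ fully via Lemma~\ref{Lem:fctn2} and unrolls the product $t^1(\xi_1)\cdots t^1(\xi_n)$ step by step, which is the same computation as your induction with the two-term factorisation from Lemma~\ref{Lem:fctn}, and the adjoint is handled the same way (matrix coefficients, as in Lemma~\ref{Lem:adjoint}).

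One caveat: your final remark about extending from $C_c(U_n)$ to $C_0(U_n)$ does not work as stated. The bound $\|t^n(\xi)\|^2\le\|\ip{\xi}{\xi}_n\|_\infty$ is correct, but $\|\ip{\xi}{\xi}_n\|_\infty$ is not controlled by $\|\xi\|_\infty$ in general (the fibre sums can have unboundedly many terms as $\xi$ ranges over $C_c(U_n)$), so $t^n$ need not extend continuously to $C_0(U_n)$. The appearance of $C_0(U_n)$ in the statement is almost certainly a typo for $C_c(U_n)$: the paper's own proof uses Lemma~\ref{Lem:fctn2}, which requires compact support, and nowhere does the paper define $t^n$ on $C_0(U_n)$. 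Drop that sentence and your proof is complete.
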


\begin{proof}
By Lemma~\ref{Lem:fctn2}, 
we can choose $\xi_1,\ldots,\xi_n\in C_c(U)$ 
such that 
\[
\xi(x)=\xi_1(x)\xi_2(\sigma(x))\cdots \xi_n(\sigma^{n-1}(x))
\] 
for all $x \in U_n$. 
Then we have 
\begin{align*}
\pi_{(x_0,\gamma_0)}(t^n(\xi))\delta_x
&=\pi_{(x_0,\gamma_0)}\big(t^1(\xi_1)t^1(\xi_2)\ldots 
t^1(\xi_{n-1})t^1(\xi_n)\big)\delta_x\\
&=\pi_{(x_0,\gamma_0)}\big(t^1(\xi_1)t^1(\xi_2)\ldots t^1(\xi_{n-1})\big)
\gamma_0\sum_{\sigma(y_1)=x}\xi_n(y_1)\delta_{y_1}\\
&=\pi_{(x_0,\gamma_0)}\big(t^1(\xi_1)\ldots t^1(\xi_{n-2})\big)
\gamma_0^2\sum_{\sigma(y_1)=x}\xi_n(y_1)
\sum_{\sigma(y_2)=y_1}\xi_{n-1}(y_2)\delta_{y_2}\\
&=\pi_{(x_0,\gamma_0)}\big(t^1(\xi_1)\ldots t^1(\xi_{n-2})\big)
\gamma_0^2\sum_{\sigma^2(y_2)=x}\xi_{n-1}(y_2)\xi_n(\sigma(y_2))\delta_{y_2}\\
&\ \,\vdots\\
&=\pi_{(x_0,\gamma_0)}\big(t^1(\xi_1)\big)\gamma_0^{n-1}
\!\!\!\sum_{\sigma^{n-1}(y_{n-1})=x}\!\!\!
\xi_2(y_{n-1})\xi_3(\sigma(y_{n-1}))\cdots\\
&\phantom{\pi_{(x_0,\gamma_0)}\big(t^1(\xi_1)\big)\gamma_0^{n-1}
\!\!\!\sum_{\sigma^{n-1}(y_{n-1})=x}}
\cdots\xi_{n-1}(\sigma^{n-3}(y_{n-1}))\xi_n(\sigma^{n-2}(y_{n-1}))\delta_{y_{n-1}}\\
&=\gamma_0^n\sum_{\sigma^n(y)=x}\xi(y)\delta_{y}. 
\end{align*}
From this equation, 
we can compute $\pi_{(x_0,\gamma_0)}(t^n(\xi))^*\delta_x$ 
in a similar way to the proof of Lemma \ref{Lem:adjoint}. 
\end{proof}

\begin{proposition}
For $(x_0,\gamma_0)\in X\times\T$, 
the representation 
$\pi_{(x_0,\gamma_0)}\colon C^*(\Sigma)\to B(H_{x_0})$ 
is irreducible. 
\end{proposition}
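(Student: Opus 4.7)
The plan is to apply Schur's lemma: I will show that any $T\in B(H_{x_0})$ commuting with $\pi_{(x_0,\gamma_0)}(C^*(\Sigma))$ must be a scalar. Writing $T\delta_x=\sum_{y\in\Orb(x_0)}c_{y,x}\delta_y$ for the matrix coefficients of $T$ in the orthonormal basis $\{\delta_x\}_{x\in\Orb(x_0)}$, I will first force $T$ to be diagonal and then force all diagonal entries to coincide.

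First I would exploit the commutation with the image of $t^0$. For each $f\in C_0(X)$, the operator $\pi_{(x_0,\gamma_0)}(t^0(f))$ acts diagonally with eigenvalues $f(x)$. Comparing $T\pi_{(x_0,\gamma_0)}(t^0(f))\delta_x$ and $\pi_{(x_0,\gamma_0)}(t^0(f))T\delta_x$ yields $(f(x)-f(y))c_{y,x}=0$ for all $x,y\in\Orb(x_0)$ and all $f\in C_0(X)$. Since $C_0(X)$ separates points of the locally compact Hausdorff space $X$, this forces $c_{y,x}=0$ whenever $y\neq x$. Hence $T\delta_x=c_x\delta_x$ for some scalars $(c_x)_{x\in\Orb(x_0)}$.

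Next I would exploit the commutation with the image of $t^1$. For $\xi\in C_c(U)$ and $x\in\Orb(x_0)$, the equality $T\pi_{(x_0,\gamma_0)}(t^1(\xi))\delta_x=\pi_{(x_0,\gamma_0)}(t^1(\xi))T\delta_x$ combined with the defining formula for $t^1_{(x_0,\gamma_0)}$ gives
\[
\gamma_0\sum_{\sigma(y)=x}(c_y-c_x)\xi(y)\delta_y=0.
\]
Since the $\delta_y$ with $\sigma(y)=x$ are distinct basis vectors, choosing $\xi\in C_c(U)$ with $\xi(y_0)=1$ for any prescribed $y_0\in U$ shows that $c_{y_0}=c_{\sigma(y_0)}$ for every $y_0\in U\cap\Orb(x_0)$.

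Finally, by the definition of the orbit, for any two points $x,x'\in\Orb(x_0)$ there exist $n,m\in\N$ with $\sigma^n(x)=\sigma^m(x')$; thus $x\in U_n$ and $x'\in U_m$, so each of $x,\sigma(x),\ldots,\sigma^{n-1}(x)$ and $x',\sigma(x'),\ldots,\sigma^{m-1}(x')$ lies in $U$. Iterating the relation $c_y=c_{\sigma(y)}$ along these two finite chains yields $c_x=c_{\sigma^n(x)}=c_{\sigma^m(x')}=c_{x'}$, so all $c_x$ coincide and $T$ is a scalar multiple of the identity. Irreducibility follows. No step is particularly delicate; the only substantive point is the last one, where the orbit relation propagates equality of diagonal entries through the local homeomorphism $\sigma$.
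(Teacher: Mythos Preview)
Your proof is correct. You and the paper use the same three structural ingredients---the diagonal action of $t^0(C_0(X))$ separating basis vectors, the action of $t^1(C_c(U))$ linking $\delta_y$ to $\delta_{\sigma(y)}$, and the orbit relation connecting any two basis vectors by finite $\sigma$-chains---but you run the argument through the commutant via Schur's lemma, whereas the paper works on the bicommutant side: it shows directly that every matrix unit $e_{x,y}$ lies in the weak closure of $\pi_{(x_0,\gamma_0)}(C^*(\Sigma))$, first obtaining each diagonal $e_{x,x}$ as a weak limit in $t^0_{(x_0,\gamma_0)}(C_0(X))$, then getting $e_{x,\sigma(x)}=e_{x,x}\,t^1_{(x_0,\gamma_0)}(\xi)$ for suitable $\xi$, and finally composing these along orbit chains to reach arbitrary $e_{x,y}$. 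Your route is slightly more elementary since it avoids any mention of weak closures; the paper's route is more constructive, as it exhibits the matrix units explicitly in the weak closure of the image.
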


\begin{proof}
Let $\{e_{x,y}\}_{x,y \in \Orb(x_0)}$ be 
the matrix units of $B(H_{x_0})$. 
Namely $e_{x,y} \in B(H_{x_0})$ satisfies 
\[
e_{x,y}\delta_z = \begin{cases}
\delta_x& \text{if $z=y$,}\\
0 & \text{otherwise}
\end{cases}
\]
for $z \in \Orb(x_0)$. 
For all $x\in \Orb(x_0)$, 
it is standard to see that 
$e_{x,x}$ is in the weak closure of 
$\pi_{(x_0,\gamma_0)}(t^0(C_0(X)))=t^0_{(x_0,\gamma_0)}(C_0(X))
\subset B(H_{x_0})$. 
Take $x \in \Orb(x_0)$ with $x \in U$. 
Take $\xi \in C_0(U)$ with $\xi(x)=\gamma_0^{-1}$. 
Then we have $e_{x,x}t_{(x_0,\gamma_0)}^1(\xi)=e_{x,\sigma(x)}$. 
Hence $e_{x,\sigma(x)}$ is in the weak closure of 
$\pi_{(x_0,\gamma_0)}(C^*(\Sigma))$ 
for all $x\in \Orb(x_0)$ with $x \in U$. 
Take $x,y \in \Orb(x_0)$. 
Then there exist $n,m \in \N$ with $\sigma^n(x)=\sigma^m(y)$. 
We have that 
\begin{align*}
e_{x,y} &= e_{x,\sigma^n(x)}(e_{y,\sigma^m(y)})^* \\
&= e_{x,x}e_{x,\sigma(x)}e_{\sigma(x),\sigma^2(x)}\ldots
e_{\sigma^{n-1}(x),\sigma^n(x)}
\big(e_{y,y}e_{y,\sigma(y)}e_{\sigma(y),\sigma^2(y)}\ldots
e_{\sigma^{m-1}(y),\sigma^m(y)}\big)^*
\end{align*}
is in the weak closure of 
$\pi_{(x_0,\gamma_0)}(C^*(\Sigma))$. 
Hence the weak closure of 
$\pi_{(x_0,\gamma_0)}(C^*(\Sigma))$. 
is whole $B(H_{x_0})$. 
\end{proof}

\begin{corollary}
For $(x_0,\gamma_0)\in X\times\T$, 
the ideal $P_{(x_0,\gamma_0)}$ is primitive. 
\end{corollary}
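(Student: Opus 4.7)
The plan is essentially immediate: the corollary is a direct consequence of the preceding proposition together with the definition of a primitive ideal. Recall that by definition, a (closed two-sided) ideal $I$ of a \Ca $A$ is called \emph{primitive} if there exists an irreducible representation $\pi\colon A\to B(H)$ on some Hilbert space $H$ with $I=\ker\pi$.

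With this in mind, my plan is as follows. First, I would recall that by definition $P_{(x_0,\gamma_0)}=\ker \pi_{(x_0,\gamma_0)}$, where $\pi_{(x_0,\gamma_0)}\colon C^*(\Sigma)\to B(H_{x_0})$ is the \shom induced by the pair $(t^0_{(x_0,\gamma_0)},t^1_{(x_0,\gamma_0)})$. Second, I would invoke the previous proposition, which asserts that $\pi_{(x_0,\gamma_0)}$ is irreducible. Combining these two facts, $P_{(x_0,\gamma_0)}$ is exhibited as the kernel of an irreducible representation, and hence is primitive by definition.

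There is essentially no obstacle here; all the nontrivial work (construction of the representation, verification of the Cuntz--Pimsner-type relations, and the argument that the image generates $B(H_{x_0})$ in the weak operator topology) has already been carried out in the preceding lemmas and the proposition just above. The corollary is simply a repackaging of the irreducibility statement in the language of primitive ideals, so the proof will consist of a single sentence citing the proposition and the definition of primitivity.
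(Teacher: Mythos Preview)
Your proposal is correct and matches the paper's approach exactly: the paper states this corollary without proof immediately after the proposition establishing irreducibility of $\pi_{(x_0,\gamma_0)}$, so it is understood to follow directly from the definition of a primitive ideal as the kernel of an irreducible representation.
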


\begin{lemma}\label{inv0}
For $x\in U$ and $\gamma\in\T$, 
we have $P_{(x,\gamma)}=P_{(\sigma(x),\gamma)}$. 
\end{lemma}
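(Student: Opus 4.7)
The plan is to observe that this lemma is essentially a tautology once we unwind the definitions: both representations are defined on the same Hilbert space by the same formulas, so they are literally equal as representations, not merely equivalent.

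First I would verify that $\Orb(x) = \Orb(\sigma(x))$ whenever $x \in U$. Take $y \in \Orb(x)$, so $\sigma^n(y) = \sigma^m(x)$ for some $n,m \in \N$. If $m \geq 1$ this says $\sigma^n(y) = \sigma^{m-1}(\sigma(x))$, witnessing $y \in \Orb(\sigma(x))$; if $m = 0$, then $\sigma^{n+1}(y) = \sigma(x)$, which also gives $y \in \Orb(\sigma(x))$. The reverse inclusion is even easier: $\sigma^n(y) = \sigma^m(\sigma(x)) = \sigma^{m+1}(x)$ directly exhibits $y \in \Orb(x)$. (Alternatively, this equality was already noted immediately after the definition of $\Orb$.)

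Consequently $H_x$ and $H_{\sigma(x)}$ have the same distinguished orthonormal basis $\{\delta_z\}_{z \in \Orb(x)}$, so they are the same Hilbert space. Now the crucial point is that the formulas
\[
t^0_{(x_0,\gamma_0)}(f)\delta_z = f(z)\delta_z, \qquad
t^1_{(x_0,\gamma_0)}(\xi)\delta_z = \gamma_0 \sum_{\sigma(y)=z} \xi(y)\delta_y
\]
depend on $x_0$ only through the underlying orbit $\Orb(x_0)$, not through the chosen base point. Therefore $t^0_{(x,\gamma)} = t^0_{(\sigma(x),\gamma)}$ and $t^1_{(x,\gamma)} = t^1_{(\sigma(x),\gamma)}$ as maps into $B(H_x) = B(H_{\sigma(x)})$.

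By the universal property of $C^*(\Sigma)$, the induced \shoM s agree, i.e.\ $\pi_{(x,\gamma)} = \pi_{(\sigma(x),\gamma)}$, and in particular $P_{(x,\gamma)} = \ker \pi_{(x,\gamma)} = \ker \pi_{(\sigma(x),\gamma)} = P_{(\sigma(x),\gamma)}$. There is no real obstacle; the entire content is checking that $\Orb(\cdot)$ is invariant under applying $\sigma$ where it is defined, which is immediate from the definition.
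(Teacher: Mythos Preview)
Your argument is correct and is exactly the paper's approach, only with the details spelled out: the paper's proof simply says that since $\Orb(x)=\Orb(\sigma(x))$ we have $\pi_{(x,\gamma)}=\pi_{(\sigma(x),\gamma)}$, hence the kernels agree. Your expansion of why the orbits coincide and why the representations then literally coincide is accurate.
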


\begin{proof}
Since $\Orb(x)=\Orb(\sigma(x))$, 
we have $\pi_{(x,\gamma)}=\pi_{(\sigma(x),\gamma)}$. 
Hence $P_{(x,\gamma)}=P_{(\sigma(x),\gamma)}$. 
\end{proof}

\begin{definition}
Let $x\in X$. 
If there exist $k,n\in\N$ with $n \geq 1$ 
such that $\sigma^{k+n}(x)=\sigma^{k}(x)$, 
then we say that $x$ is {\em periodic}, 
and define its {\em period} $p(x)$ 
to be the smallest positive integer $n$ 
satisfying $\sigma^{k+n}(x)=\sigma^{k}(x)$ for some $k$. 
We also denote $l(x)$ be the smallest natural number $k$ 
satisfying $\sigma^{k+p(x)}(x)=\sigma^{k}(x)$. 
A point $x$ which is not periodic 
is said to be {\em aperiodic}. 
We set $p(x)=l(x)=\infty$ for an aperiodic point $x$. 
\end{definition}

It is fairly easy to see, but worth remarking, that 
we have $\sigma^k(x)=\sigma^l(x)$ for $k,l\in \N$ with $k>l$ 
if and only if $l\geq l(x)$ and $k-l\in p(x)\N$.

\begin{lemma}\label{aper}
For an aperiodic point $x_0\in X$, 
we have $P_{(x_0,\gamma)}=P_{(x_0,1)}$ 
for all $\gamma\in\T$. 
\end{lemma}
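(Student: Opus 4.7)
The plan is to construct a unitary $U_\gamma$ on $H_{x_0}$ that intertwines the two representations $\pi_{(x_0,1)}$ and $\pi_{(x_0,\gamma)}$, which will immediately give equality of the kernels. The crucial ingredient enabling the construction is an unambiguous $\Z$-valued ``level'' function on the orbit, which only exists in the aperiodic case.

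First, for each $x \in \Orb(x_0)$ I would define $n(x) \in \Z$ by choosing $n,m \in \N$ with $\sigma^n(x) = \sigma^m(x_0)$ and setting $n(x) = n - m$. The main technical step is to verify this is well-defined. If $\sigma^{n'}(x)=\sigma^{m'}(x_0)$ is another such pair, then (assuming WLOG $n' \geq n$) one gets $\sigma^{n'-n+m}(x_0) = \sigma^{m'}(x_0)$, and the remark following the definition of $p(x)$ --- combined with aperiodicity, $p(x_0) = \infty$ --- forces $n'-n+m = m'$, i.e.\ $n'-m' = n-m$. The same remark also shows that if $\sigma(y)=x$ then $n(y) = n(x)+1$.

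Next, define a unitary $U_\gamma \in B(H_{x_0})$ by $U_\gamma \delta_x = \gamma^{n(x)}\delta_x$ for $x \in \Orb(x_0)$. I would then check that conjugation by $U_\gamma$ carries $t^0_{(x_0,1)}$ to $t^0_{(x_0,\gamma)}$ and $t^1_{(x_0,1)}$ to $t^1_{(x_0,\gamma)}$. The first is immediate since $U_\gamma$ is diagonal in the basis and $t^0_{(x_0,1)}(f)$ is too. For the second, a direct computation gives
\begin{align*}
U_\gamma t^1_{(x_0,1)}(\xi) U_\gamma^* \delta_x
&= \gamma^{-n(x)} \sum_{\sigma(y)=x}\xi(y)\,\gamma^{n(y)}\delta_y
= \gamma \sum_{\sigma(y)=x}\xi(y)\delta_y = t^1_{(x_0,\gamma)}(\xi)\delta_x,
\end{align*}
using $n(y)=n(x)+1$ for each $y$ with $\sigma(y)=x$.

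By the universal property, it follows that $\pi_{(x_0,\gamma)} = \Ad(U_\gamma) \circ \pi_{(x_0,1)}$, hence the two \shoM s have the same kernel, i.e.\ $P_{(x_0,\gamma)} = P_{(x_0,1)}$. The main obstacle throughout is really just the well-definedness of $n(x)$: without aperiodicity, $n(x)$ is only defined modulo $p(x_0)$, which is precisely the obstruction to the unitary intertwining existing in general, and corresponds to the fact that at periodic points the ideal $P_{(x_0,\gamma)}$ genuinely depends on $\gamma$.
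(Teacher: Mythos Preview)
Your proof is correct and follows essentially the same approach as the paper: both construct a diagonal unitary $U_\gamma\delta_x=\gamma^{n(x)}\delta_x$ from a $\Z$-valued level function on $\Orb(x_0)$ (the paper denotes it $c$ and characterizes it by $c(x_0)=0$, $c(\sigma(x))=c(x)-1$) and check it intertwines $\pi_{(x_0,1)}$ with $\pi_{(x_0,\gamma)}$. Your version is slightly more detailed in that you spell out the well-definedness of $n(x)$ using aperiodicity, which the paper leaves implicit.
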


\begin{proof}
If $x_0\in X$ is aperiodic, 
we can define a map $c\colon \Orb(x_0)\to \Z$ 
such that $c(x_0)=0$ and $c(\sigma(x))=c(x)-1$ for $x\in U$. 
We define a unitary $u_\gamma\in B(H_{x_0})$ 
by $u_\gamma\delta_x=\gamma^{c(x)}\delta_x$ for $x\in \Orb(x_0)$. 
It is not difficult to check that 
\begin{align*}
u_\gamma t^0_{(x_0,1)}(f)u_\gamma^*
&=t^0_{(x_0,1)}(f)=t^0_{(x_0,\gamma)}(f),\\ 
u_\gamma t^1_{(x_0,1)}(\xi)u_\gamma^*
&=\gamma t^1_{(x_0,1)}(\xi)=t^1_{(x_0,\gamma)}(\xi)
\end{align*}
for $f\in C_0(X)$ and $\xi\in C_c(U)$. 
Hence two representation $\pi_{(x_0,\gamma)}$ 
and $\pi_{(x_0,1)}$ are unitarily equivalent. 
This shows $P_{(x_0,\gamma)}=P_{(x_0,1)}$. 
\end{proof}

We denote the elements of $\Z/n\Z$ by $\{0,1,\ldots,n-1\}$, 
and sometimes consider them as elements in $\Z$. 

\begin{lemma}\label{per1}
For a periodic point $x_0\in X$ with period $n$, 
we have $P_{(x_0,\gamma)}=P_{(x_0,\mu)}$ 
if $\gamma^n=\mu^n$. 
\end{lemma}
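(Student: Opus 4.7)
The plan is to adapt the proof of Lemma~\ref{aper}. Since $\gamma^n = \mu^n$, the element $\nu := \gamma\mu^{-1} \in \T$ is an $n$-th root of unity. I will build a unitary $u \in B(H_{x_0})$ intertwining $\pi_{(x_0,\mu)}$ and $\pi_{(x_0,\gamma)}$, just as in Lemma~\ref{aper}, but with the $\Z$-valued cocycle there replaced by a $\Z/n\Z$-valued cocycle, since the orbit of a periodic point contains loops of length $n$.

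Concretely, I would define $c \colon \Orb(x_0) \to \Z/n\Z$ by choosing, for each $y \in \Orb(x_0)$, natural numbers $k,m$ with $\sigma^k(y) = \sigma^m(x_0)$ and setting $c(y) := m - k \pmod{n}$. The main obstacle is verifying this is well-defined: if also $\sigma^{k'}(y) = \sigma^{m'}(x_0)$, then applying $\sigma^{k'}$ to the first equation and $\sigma^k$ to the second produces $\sigma^{k+m'}(x_0) = \sigma^{k'+m}(x_0)$, and the remark immediately after the definition of period forces $(k'+m) - (k+m') \in n\Z$, giving $m - k \equiv m' - k' \pmod{n}$. With this convention, $c(x_0) = 0$ and $c(\sigma(x)) = c(x) - 1$ for every $x \in U \cap \Orb(x_0)$.

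Since $\nu^n = 1$, the formula $u\delta_x := \nu^{c(x)}\delta_x$ then defines a unitary $u \in B(H_{x_0})$ unambiguously. A routine verification, parallel to the one in Lemma~\ref{aper}, gives
\begin{align*}
u\, t^0_{(x_0,\mu)}(f)\, u^* &= t^0_{(x_0,\gamma)}(f), \\
u\, t^1_{(x_0,\mu)}(\xi)\, u^* &= t^1_{(x_0,\gamma)}(\xi),
\end{align*}
for $f \in C_0(X)$ and $\xi \in C_c(U)$; the second identity is where the cocycle relation $c(y) - c(x) = 1$ for $\sigma(y) = x$ contributes an extra factor $\nu$, so that $\nu\mu = \gamma$ appears in front of the sum. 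Hence $\pi_{(x_0,\mu)}$ and $\pi_{(x_0,\gamma)}$ are unitarily equivalent, and $P_{(x_0,\gamma)} = P_{(x_0,\mu)}$. Apart from the well-definedness of $c$, the argument is a direct copy of Lemma~\ref{aper}.
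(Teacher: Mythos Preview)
Your approach is exactly the paper's: build a $\Z/n\Z$-valued cocycle $c$ on $\Orb(x_0)$ and conjugate by the diagonal unitary $u\delta_x=\nu^{c(x)}\delta_x$ to intertwine $\pi_{(x_0,\mu)}$ and $\pi_{(x_0,\gamma)}$. The paper simply asserts that such a $c$ exists, while you supply an explicit formula and a well-definedness check; that extra detail is correct.

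There is one sign slip, however. With your formula $c(y)\equiv m-k$ where $\sigma^k(y)=\sigma^m(x_0)$, one actually gets $c(\sigma(x))=c(x)+1$, not $c(x)-1$: if $\sigma^k(x)=\sigma^m(x_0)$ then $\sigma^{k}(\sigma(x))=\sigma^{m+1}(x_0)$, so $c(\sigma(x))=(m+1)-k=c(x)+1$. Consequently, for $\sigma(y)=x$ you obtain $c(y)-c(x)=-1$, and the conjugation produces the factor $\nu^{-1}\mu$ rather than $\nu\mu=\gamma$. The fix is trivial---define $c(y)\equiv k-m$ instead (or replace $\nu$ by $\nu^{-1}$)---and then everything goes through exactly as you describe.
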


\begin{proof}
Similarly as in the proof of Lemma \ref{aper}, 
we can define a map $c\colon \Orb(x_0)\to \Z/n\Z$ 
such that $c(x_0)=0$ and $c(\sigma(x))=c(x)-1$ for $x\in U$. 
Set $\lambda :=\gamma\overline{\mu}\in\T$. 
We have $\lambda^n=1$. 
Hence we can define $u_\lambda\in B(H_{x_0})$ 
by $u_\lambda\delta_x=\lambda^{c(x)}\delta_x$ for $x\in \Orb(x_0)$. 
Similarly as in the proof of Lemma \ref{aper}, 
two representations $\pi_{(x_0,\gamma)}$ 
and $\pi_{(x_0,\mu)}$ are unitarily equivalent 
by the unitary $u_\lambda$. 
Hence we obtain $P_{(x_0,\gamma)}=P_{(x_0,\mu)}$. 
\end{proof}

\begin{definition}
For an integer $n\geq 2$, 
we denote by $\zeta_n=e^{2\pi i/n}$ 
the $n$-th root of unity. 
\end{definition}

\begin{lemma}\label{Lem:gnmn}
Let $x_0\in X$ be a periodic point with period $n$. 
If $x_0$ is isolated in $\Orb(x_0)$, 
then for $\gamma,\mu\in\T$, 
$P_{(x_0,\gamma)}=P_{(x_0,\mu)}$ 
if and only if $\gamma^n=\mu^n$. 
\end{lemma}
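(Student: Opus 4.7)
The plan is to prove the implication $P_{(x_0,\gamma)}=P_{(x_0,\mu)}\Rightarrow\gamma^n=\mu^n$, since the converse is exactly Lemma~\ref{per1}. Equivalently, assuming $\gamma^n\neq\mu^n$, I will exhibit an element $b\in P_{(x_0,\mu)}\setminus P_{(x_0,\gamma)}$.

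First I reduce to the case $\sigma^n(x_0)=x_0$. Set $l=l(x_0)$ and $z=\sigma^l(x_0)$, so $\sigma^n(z)=z$ and in particular $z\in U_n$. Iterating Lemma~\ref{inv0} gives $P_{(x_0,\nu)}=P_{(z,\nu)}$ for every $\nu\in\T$, so it suffices to work with $z$. The isolation hypothesis also transfers: because $x_0$ is periodic we have $x_0\in U$, so for any open $W$ with $W\cap\Orb(x_0)=\{x_0\}$, the set $\sigma(W\cap U)$ is an open neighborhood of $\sigma(x_0)$ (as $\sigma\colon U\to X$ is a local homeomorphism, hence open); any $y=\sigma(y')\in\sigma(W\cap U)\cap\Orb(x_0)$ forces $y'\in\Orb(y)=\Orb(x_0)$, so $y'\in W\cap\Orb(x_0)=\{x_0\}$ and $y=\sigma(x_0)$. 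Iterating this $l$ times shows $z$ is isolated in $\Orb(z)=\Orb(x_0)$. Thus from now on I assume $\sigma^n(x_0)=x_0$.

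Since $\sigma^n$ is a local homeomorphism on $U_n$, choose an open neighborhood $V\subset U_n$ of $x_0$ on which $\sigma^n$ is injective; shrinking if necessary, arrange $V\cap\Orb(x_0)=\{x_0\}$. Pick $f,\xi\in C_c(V)$ with $f(x_0)=\xi(x_0)=1$; note $\xi\in C_c(U_n)$. Since $f$ vanishes on $\Orb(x_0)\setminus\{x_0\}$, for every $\nu\in\T$,
\[
\pi_{(x_0,\nu)}(t^0(f))=e_{x_0,x_0},
\]
the rank-one projection onto $\C\delta_{x_0}$, while Lemma~\ref{compute} combined with the injectivity of $\sigma^n|_V$ gives
\[
\pi_{(x_0,\nu)}(t^n(\xi))\delta_{x_0}=\nu^n\sum_{\sigma^n(y)=x_0}\xi(y)\delta_y=\nu^n\delta_{x_0},
\]
since $x_0$ is the only preimage of itself lying in $\supp\xi\subset V$. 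Therefore $\pi_{(x_0,\nu)}(t^0(f)t^n(\xi)t^0(f))=\nu^n e_{x_0,x_0}$. Setting $b=t^0(f)t^n(\xi)t^0(f)-\mu^n t^0(f)$, we obtain $\pi_{(x_0,\mu)}(b)=0$ and $\pi_{(x_0,\gamma)}(b)=(\gamma^n-\mu^n)e_{x_0,x_0}\neq 0$, yielding $b\in P_{(x_0,\mu)}\setminus P_{(x_0,\gamma)}$. The only real subtlety is the reduction step, namely verifying that isolation of $x_0$ in $\Orb(x_0)$ is inherited by $\sigma^{l(x_0)}(x_0)$; once that is in place, Lemma~\ref{compute} and the rank-one-projection calculation handle the rest.
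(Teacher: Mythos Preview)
Your argument is correct, but it follows a different line from the paper's. The paper does \emph{not} reduce to the case $l(x_0)=0$; instead it works with the whole cycle $\Lambda=\{\sigma^{l(x_0)}(x_0),\ldots,\sigma^{l(x_0)+n-1}(x_0)\}$, chooses $\xi\in C_c(U)$ equal to $1$ on $\Lambda$ and $0$ on $\Orb(x_0)\setminus\Lambda$, and observes that the nonzero spectrum of $t^1_{(x_0,\gamma)}(\xi)$ is $\{\gamma,\gamma\zeta_n,\ldots,\gamma\zeta_n^{n-1}\}$. Since the spectrum of the image of $t^1(\xi)$ in $C^*(\Sigma)/P_{(x_0,\gamma)}$ is a $C^*$-algebraic invariant of the quotient, equality $P_{(x_0,\gamma)}=P_{(x_0,\mu)}$ forces these spectra to coincide, hence $\gamma^n=\mu^n$. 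Your approach trades that spectral invariant argument for an explicit separating element: after reducing to $l(x_0)=0$ via Lemma~\ref{inv0} and the openness of local homeomorphisms, you use $t^n(\xi)$ (rather than $t^1(\xi)$) sandwiched by the rank-one projection $t^0(f)$ to read off $\nu^n$ directly. The paper's route avoids the reduction step and the need to verify that isolation transfers to $\sigma^{l(x_0)}(x_0)$; your route avoids computing a spectrum and instead produces a concrete $b\in P_{(x_0,\mu)}\setminus P_{(x_0,\gamma)}$, which is arguably more transparent.
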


\begin{proof}
Let $\Lambda=\{\sigma^{l(x_0)}(x_0),\ldots,\sigma^{l(x_0)+n-1}(x_0)\}$. 
Since $\sigma$ is a local homeomorphism, 
if $x_0$ is isolated in $\Orb(x_0)$ 
we can find an open subset $V$ of $X$ 
such that $V\cap \Orb(x_0)=\Lambda$. 
Hence there exists $\xi\in C_c(U)$ 
such that $\xi(x)=1$ for $x\in\Lambda$ 
and $\xi(x)=0$ for $x\in\Orb(x_0)\setminus \Lambda$. 
For each $\gamma\in\T$, 
non-zero elements of the spectrum of $t^1_{(x_0,\gamma)}(\xi)\in B(H_{x_0})$ 
are $\gamma,\gamma\zeta_n,\ldots,\gamma\zeta_n^{n-1}$. 
Hence those of the image of $t^1(\xi)$ 
via the natural surjection $C^*(\Sigma)\to C^*(\Sigma)/P_{(x_0,\gamma)}$ 
are also $\gamma,\gamma\zeta_n,\ldots,\gamma\zeta_n^{n-1}$. 
This shows that 
if $P_{(x_0,\gamma)}=P_{(x_0,\mu)}$ then $\gamma^n=\mu^n$. 
The converse follows from Lemma \ref{per1}. 
\end{proof}

\begin{remark}
We will see in Proposition~\ref{Prop:notiso}
that for a periodic point $x_0\in X$ 
such that $x_0$ is not isolated in $\Orb(x_0)$, 
we have $P_{(x_0,\gamma)}=P_{(x_0,1)}$ for all $\gamma\in\T$. 
\end{remark}

\begin{remark}
We will see in Remark~\ref{Rem:2ndctbl}
that if $X$ is second countable $P_{(x,\gamma)}$'s are 
whole primitive ideal. 
(See \cite{SW} for the case $U=X$.) 
In general, there is a primitive ideal 
which is not in the form $P_{(x,\gamma)}$
(see \cite[Example~13.2]{K3}). 
\end{remark}

It is probably impossible to list all primitive ideals 
in terms of elements of $X$ and $\T$. 
However it is possible to list all prime ideals 
in terms of subsets of $X$ and $\T$. 
From next section, 
we will do this. 
This will show that the set $\{P_{(x,\gamma)}\}$ is sufficiently 
large (Corollary~\ref{Cor:inter}).

\section{gauge invariant ideals and $\sigma$-invariant sets}\label{Sec:gii}

\begin{definition}
A subset $X'$ of $X$ is said to be {\em $\sigma$-invariant} 
when $x\in X'$ if and only if $\sigma(x)\in X'$ for all $x\in U$. 
\end{definition}

An ideal $I$ of $C^*(\Sigma)$ is said to be {\em gauge-invariant} 
if $\beta_z(I)=I$ for all $z \in \T$ where $\beta$ is the gauge action 
defined in Appendix~\ref{Sec:UT}. 
We will see in Proposition~\ref{Prop:bij} 
that the set of gauge-invariant ideals 
corresponds bijectively to the set of closed $\sigma$-invariant subsets 
of $X$. 

\begin{definition}
For an ideal $I$ of $C^*(\Sigma)$, we define a closed set $X_I$ 
of $X$ by $t^0(C_0(X\setminus X_I))=t^0(C_0(X))\cap I$. 
\end{definition}

The following lemma is easy to see from the definition. 

\begin{lemma}\label{Lem:Xeasy}
For two ideals $I_1,I_2$ of $C^*(\Sigma)$, 
we have $X_{I_1\cap I_2}=X_{I_1} \cup X_{I_2}$. 
If $I_1 \subset I_2$, then $X_{I_1} \supset X_{I_2}$. 
\end{lemma}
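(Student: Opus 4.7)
The plan is to reduce everything to the standard bijection between closed subsets of $X$ and ideals of $C_0(X)$, transported through the injective \shom $t^0$ of Lemma~\ref{Lem:inj}. Since $t^0$ is injective, the subalgebra $t^0(C_0(X))\subset C^*(\Sigma)$ may be identified with $C_0(X)$, and the closed sets $X_I$ are characterized by saying that $X\setminus X_I$ is the open set whose vanishing ideal inside $C_0(X)$ corresponds (under $t^0$) to $t^0(C_0(X))\cap I$.

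For the first assertion, I would observe
\[
t^0(C_0(X))\cap(I_1\cap I_2)=\big(t^0(C_0(X))\cap I_1\big)\cap\big(t^0(C_0(X))\cap I_2\big),
\]
so by the definitions of $X_{I_1}$ and $X_{I_2}$ this equals $t^0(C_0(X\setminus X_{I_1}))\cap t^0(C_0(X\setminus X_{I_2}))$. Injectivity of $t^0$ lets me pull this intersection back to $C_0(X)$, where the intersection of the ideals $C_0(X\setminus X_{I_1})$ and $C_0(X\setminus X_{I_2})$ is exactly $C_0\big((X\setminus X_{I_1})\cap(X\setminus X_{I_2})\big)=C_0(X\setminus(X_{I_1}\cup X_{I_2}))$. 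Comparing with the defining equation for $X_{I_1\cap I_2}$ yields $X_{I_1\cap I_2}=X_{I_1}\cup X_{I_2}$.

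For the second assertion, if $I_1\subset I_2$ then $t^0(C_0(X))\cap I_1\subset t^0(C_0(X))\cap I_2$, i.e.\ $t^0(C_0(X\setminus X_{I_1}))\subset t^0(C_0(X\setminus X_{I_2}))$. Using injectivity of $t^0$ again and the standard fact that one ideal of $C_0(X)$ is contained in another precisely when the corresponding open sets are, we conclude $X\setminus X_{I_1}\subset X\setminus X_{I_2}$, hence $X_{I_1}\supset X_{I_2}$.

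There is no real obstacle: the content of the lemma is entirely contained in Gelfand duality for $C_0(X)$, and the only nontrivial ingredient is the injectivity of $t^0$, which has already been established. The mild care required is to make sure that $t^0(C_0(X))\cap I$ is literally the image under $t^0$ of a closed ideal of $C_0(X)$ (so that it corresponds to an honest open set), but this is automatic from injectivity of $t^0$ together with the fact that $t^0(C_0(X))$ is a $C^*$-subalgebra of $C^*(\Sigma)$.
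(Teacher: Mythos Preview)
Your proof is correct and is precisely the unpacking of the definition via Gelfand duality that the paper has in mind; the paper itself omits the argument entirely, stating only that the lemma ``is easy to see from the definition.'' One minor remark: the well-definedness of $X_I$ already presupposes injectivity of $t^0$, so your invocation of Lemma~\ref{Lem:inj} is not an extra ingredient but rather the same implicit assumption the paper makes in formulating the definition.
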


\begin{proposition}
For an ideal $I$ of $C^*(\Sigma)$, the closed set $X_I$ 
is $\sigma$-invariant. 
\end{proposition}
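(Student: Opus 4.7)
The plan is to unpack the definition of $X_I$ in terms of the injective \shom $t^0$ and then, given any $f\in C_0(X)$ with $t^0(f)\in I$, exhibit another such function whose non-vanishing locus is shifted by $\sigma$. Since $t^0$ is injective (Lemma~\ref{Lem:inj}) and $t^0(C_0(X\setminus X_I))=t^0(C_0(X))\cap I$, we have the characterization
\[
x\in X_I \iff f(x)=0 \text{ for every } f\in C_0(X) \text{ with } t^0(f)\in I.
\]
I will verify both implications in the definition of $\sigma$-invariance at a point $x\in U$ by contraposition, using that $I$ is a two-sided ideal and conjugating $t^0(f)$ by $t^1(\xi)$.

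For the implication $x\in X_I\Rightarrow\sigma(x)\in X_I$, I will assume $\sigma(x)\notin X_I$ and pick $f\in C_0(X)$ with $t^0(f)\in I$ and $f(\sigma(x))\neq 0$. Choose an open neighborhood $V\subset U$ of $x$ on which $\sigma$ is injective and $\xi\in C_c(V)$ with $\xi(x)=1$. The element $t^1(\xi)t^0(f)t^1(\xi)^*$ lies in $I$, and by the identity $t^1(\xi)t^0(f)=t^1(\xi(f\circ\sigma))$ derived after Definition~\ref{Def:C*S}, together with axiom~(ii) of that definition (applicable because $\sigma$ is injective on $V$), it equals $t^0(g)$ with $g:=|\xi|^2(f\circ\sigma)\in C_c(V)\subset C_0(X)$. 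Since $g(x)=|\xi(x)|^2 f(\sigma(x))=f(\sigma(x))\neq 0$, this shows $x\notin X_I$, as required.

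For the reverse implication $\sigma(x)\in X_I\Rightarrow x\in X_I$, I will instead assume $x\notin X_I$ and pick $f\in C_0(X)$ with $t^0(f)\in I$ and $f(x)\neq 0$. With the same choice of $V$ and $\xi$, the element $t^1(\xi)^*t^0(f)t^1(\xi)\in I$ equals $t^0(\ip{\xi}{f\xi})$ by the identity $t^0(f)t^1(\xi)=t^1(f\xi)$ and axiom~(i), and
\[
\ip{\xi}{f\xi}(\sigma(x))=\sum_{\sigma(z)=\sigma(x)}|\xi(z)|^2 f(z)=|\xi(x)|^2 f(x)\neq 0,
\]
since $\sigma|_V$ is injective so $z=x$ is the only preimage of $\sigma(x)$ lying in $\supp\xi$. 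Hence $\sigma(x)\notin X_I$, completing the proof.

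I do not anticipate any real obstacle. The entire argument is driven by the observation that conjugating $t^0(f)$ by $t^1(\xi)$ on one side or the other moves the evaluation of $f$ through $\sigma$ (forward or backward), and that choosing $V$ on which $\sigma$ is injective makes the fiber sums collapse to a single term. The only small bookkeeping is confirming that the two functions $|\xi|^2(f\circ\sigma)$ and $\ip{\xi}{f\xi}$ genuinely define elements of $C_0(X)$, which is immediate from their compact support.
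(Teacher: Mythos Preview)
Your proof is correct and follows essentially the same approach as the paper: both argue by contraposition, choose $\xi\in C_c(V)$ with $\sigma$ injective on $V$, and exploit axioms~(i) and~(ii) of Definition~\ref{Def:C*S} to transport non-vanishing between $x$ and $\sigma(x)$. The only cosmetic difference is that the paper takes $V$ disjoint from $X_I$ (respectively $\sigma^{-1}(X_I)$) and uses the equivalence $t^1(\xi)t^1(\xi)^*\in I\Leftrightarrow t^1(\xi)^*t^1(\xi)\in I$ directly, whereas you keep an arbitrary witnessing $f$ and conjugate $t^0(f)$ by $t^1(\xi)$; the underlying mechanism is identical.
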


\begin{proof}
Take $x\in U$. 
Suppose $x \notin X_I$. 
Choose an open subset $V$ such that 
$x \in V \subset U$, $V \cap X_I=\emptyset$ 
and $\sigma$ is injective on $V$. 
Take $\xi \in C_c(V)$ with $\xi(x)=1$. 
Then $\xi\overline{\xi} \in C_0(X \setminus X_I)$. 
Hence $t^1(\xi)t^1(\xi)^*=t^0(\xi\overline{\xi}) \in I$. 
This implies $t^0(\ip{\xi}{\xi})=t^1(\xi)^*t^1(\xi) \in I$. 
Hence $\ip{\xi}{\xi} \in C_0(X \setminus X_I)$. 
Since $\ip{\xi}{\xi}(\sigma(x))=|\xi(x)|^2=1$, 
we have $\sigma(x) \notin X_I$. 
Now suppose $\sigma(x) \notin X_I$. 
Choose an open subset $V$ such that 
$x \in V \subset U$, $V \cap \sigma^{-1}(X_I)=\emptyset$ 
and $\sigma$ is injective on $V$. 
Take $\xi \in C_c(V)$ with $\xi(x)=1$. 
Then $\ip{\xi}{\xi} \in C_0(X \setminus X_I)$. 
Hence $t^1(\xi)^*t^1(\xi) = t^0(\ip{\xi}{\xi}) \in I$. 
This implies $t^0(\xi\overline{\xi})=t^1(\xi)t^1(\xi)^* \in I$. 
Hence $\xi\overline{\xi} \in C_0(X \setminus X_I)$. 
Since $\xi\overline{\xi}(x)=|\xi(x)|^2=1$, 
we have $x \notin X_I$. 
Thus we have shown that 
$x\in X_I$ if and only if $\sigma(x)\in X_I$. 
\end{proof}

Take a closed $\sigma$-invariant subset $X'$ of $X$. 
We define an SGDS $\Sigma'=(X',\sigma')$ where $\sigma'$ is a 
restriction of $\sigma$ on $U':=X' \cap U$. 
Let $(t'^0,t'^1)$ be the universal pair for $C^*(\Sigma')$. 
We have the following. 

\begin{lemma}\label{Lem:StoS'}
There exists a surjection $\varPhi\colon C^*(\Sigma) \to C^*(\Sigma')$ 
such that $\varPhi(t^0(f))=t'^0(f|_{X'})$ 
and $\varPhi(t^1(\xi))=t'^1(\xi|_{U'})$. 
\end{lemma}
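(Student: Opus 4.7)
The plan is to appeal to the universal property of $C^*(\Sigma)$. Define $\widetilde t^{\,0}\colon C_0(X)\to C^*(\Sigma')$ by $\widetilde t^{\,0}(f)=t'^{0}(f|_{X'})$ and $\widetilde t^{\,1}\colon C_c(U)\to C^*(\Sigma')$ by $\widetilde t^{\,1}(\xi)=t'^{1}(\xi|_{U'})$. Since restriction to the closed set $X'\subset X$ is a $*$-homomorphism $C_0(X)\to C_0(X')$, the map $\widetilde t^{\,0}$ is a $*$-homomorphism. The restriction of $\xi\in C_c(U)$ to the relatively closed subset $U'=U\cap X'$ of $U$ has compact support in $U'$, so $\widetilde t^{\,1}$ is well-defined and linear. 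Once I verify that $(\widetilde t^{\,0},\widetilde t^{\,1})$ satisfies conditions (i) and (ii) of Definition~\ref{Def:C*S} for $\Sigma'$, the universal property produces a $*$-homomorphism $\varPhi\colon C^*(\Sigma)\to C^*(\Sigma')$ with $\varPhi\circ t^{0}=\widetilde t^{\,0}$ and $\varPhi\circ t^{1}=\widetilde t^{\,1}$, which is the claimed map.

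The crux of the verification is the identity
\[
\ip{\xi|_{U'}}{\eta|_{U'}}'=\ip{\xi}{\eta}\big|_{X'},\qquad \xi,\eta\in C_c(U),
\]
where $\ip{\cdot}{\cdot}'$ denotes the pairing defined by $\sigma'$. For $x\in X'$, the sum $\sum_{\sigma(y)=x}\overline{\xi}(y)\eta(y)$ ranges over those $y\in U$ with $\sigma(y)=x\in X'$; the $\sigma$-invariance of $X'$ forces each such $y$ to lie in $X'$, hence in $U'$, so the sum agrees with the $\sigma'$-sum defining $\ip{\xi|_{U'}}{\eta|_{U'}}'(x)$. This gives (i). Condition (ii) follows immediately, since $\sigma'$ is injective on $V':=V\cap X'$ whenever $\sigma$ is injective on an open $V\subset U$, and restriction commutes with pointwise multiplication and complex conjugation.

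For surjectivity, it is enough to show that $t'^{0}(C_0(X'))$ and $t'^{1}(C_c(U'))$ lie in the image of $\varPhi$, since these generate $C^*(\Sigma')$. This reduces to surjectivity of the restriction maps $C_0(X)\to C_0(X')$ and $C_c(U)\to C_c(U')$, both of which follow from Tietze extension in locally compact Hausdorff spaces: for the second, given $\eta\in C_c(U')$, extend it continuously to $U$ via the one-point compactification and multiply by a cutoff $g\in C_c(U)$ equal to $1$ on $\supp(\eta)$. The only genuine obstacle in the whole argument is the inner-product identity in the second paragraph, which is the one place where both the closedness and the $\sigma$-invariance of $X'$ are essential; everything else is universal-property bookkeeping and standard extension theory.
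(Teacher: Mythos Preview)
Your proof is correct and follows exactly the paper's approach: invoke the universal property of $C^*(\Sigma)$ by checking that the restricted pair satisfies the defining relations, which reduces via your inner-product identity to the relations already satisfied by $(t'^0,t'^1)$ for $\Sigma'$; you also spell out surjectivity, which the paper leaves implicit. One small wording slip: the relations $(\widetilde t^{\,0},\widetilde t^{\,1})$ must satisfy are (i) and (ii) of Definition~\ref{Def:C*S} for $\Sigma$ (the domains are $C_0(X)$ and $C_c(U)$), not for $\Sigma'$---but your actual verification does precisely this, so the argument is unaffected.
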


\begin{proof}
It suffices to see that the pair of maps 
$C_0(X)\ni f \mapsto t'^0(f|_{X'}) \in C^*(\Sigma')$ 
and $C_c(U)\ni \xi \mapsto t'^1(\xi|_{U'}) \in C^*(\Sigma')$ 
satisfy (i) and (ii) in Definition~\ref{Def:C*S} for $\Sigma$. 
This follows from the fact that $t'^0$ and $t'^1$ satisfy 
(i) and (ii) in Definition~\ref{Def:C*S} for $\Sigma'$. 
\end{proof}

\begin{definition}
For a closed $\sigma$-invariant subset $X'$ of $X$, 
let $I_{X'}$ be the kernel of the surjection 
$\varPhi\colon C^*(\Sigma) \to C^*(\Sigma')$ 
in Lemma~\ref{Lem:StoS'}. 
\end{definition}

\begin{proposition}\label{Prop:XIX}
For a closed $\sigma$-invariant subset $X'$ of $X$, 
we have $X_{I_{X'}}=X'$. 
\end{proposition}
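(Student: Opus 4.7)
The plan is to unravel the definitions and reduce the equality to the injectivity of the map $t'^0$ for the restricted system $\Sigma'$, which is already known from Lemma~\ref{Lem:inj}.

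First I would compute $t^0(C_0(X))\cap I_{X'}$ explicitly. By the construction of $\varPhi$ in Lemma~\ref{Lem:StoS'}, for $f\in C_0(X)$ we have $\varPhi(t^0(f))=t'^0(f|_{X'})$. Since Lemma~\ref{Lem:inj} applies equally to the SGDS $\Sigma'=(X',\sigma')$, the \shom $t'^0\colon C_0(X')\to C^*(\Sigma')$ is injective. Hence $t^0(f)\in I_{X'}=\ker\varPhi$ if and only if $f|_{X'}=0$, i.e.\ if and only if $f\in C_0(X\setminus X')$. This gives
\[
t^0(C_0(X))\cap I_{X'}=t^0\bigl(C_0(X\setminus X')\bigr).
\]

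Next, by the defining property of $X_{I_{X'}}$, the left-hand side equals $t^0(C_0(X\setminus X_{I_{X'}}))$. Combining these identities yields
\[
t^0\bigl(C_0(X\setminus X_{I_{X'}})\bigr)=t^0\bigl(C_0(X\setminus X')\bigr).
\]
Since $t^0$ is injective (again by Lemma~\ref{Lem:inj}, now applied to $\Sigma$), we obtain $C_0(X\setminus X_{I_{X'}})=C_0(X\setminus X')$ as ideals of $C_0(X)$, which forces equality of the open subsets $X\setminus X_{I_{X'}}=X\setminus X'$, and hence $X_{I_{X'}}=X'$ since both are closed.

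There is no real obstacle here; the only point to check carefully is that Lemma~\ref{Lem:inj} is legitimately applicable to $\Sigma'$ (it is, as the lemma is proved for an arbitrary SGDS and $\Sigma'$ is an SGDS because $X'$ is closed and $\sigma$-invariant, so $\sigma'\colon U'\to X'$ is a well-defined local homeomorphism on an open subset of the locally compact space $X'$).
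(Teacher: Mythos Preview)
Your proof is correct and follows essentially the same approach as the paper's one-line argument, which states that the conclusion follows from the fact that $\ker(\varPhi\circ t^0)=C_0(X\setminus X')$. You simply unpack this fact explicitly by invoking the injectivity of $t'^0$ (Lemma~\ref{Lem:inj} applied to $\Sigma'$) and then use the injectivity of $t^0$ to identify the closed sets; there is no substantive difference in strategy.
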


\begin{proof}
This follows from the fact that the kernel of $\varPhi \circ t^0$ 
in Lemma~\ref{Lem:StoS'} is $C_0(X\setminus X')$. 
\end{proof}

\begin{lemma}\label{Lem:IXinv}
For a closed $\sigma$-invariant subset $X'$ of $X$, 
the ideal $I_{X'}$ is gauge-invariant. 
\end{lemma}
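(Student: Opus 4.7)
The plan is to use naturality of the gauge action with respect to the restriction map $\varPhi$. Both $C^*(\Sigma)$ and $C^*(\Sigma')$ carry gauge actions $\beta$ and $\beta'$ of $\T$ (constructed via the universal property, as in Appendix~\ref{Sec:UT}), determined on generators by
\[
\beta_z(t^0(f))=t^0(f),\quad \beta_z(t^1(\xi))=z\,t^1(\xi),
\]
and analogously for $\beta'$ in terms of $(t'^0,t'^1)$. The strategy is to show that $\varPhi$ intertwines $\beta$ and $\beta'$; since $I_{X'}=\ker\varPhi$, this forces $I_{X'}$ to be $\beta$-invariant.

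First I would verify the intertwining identity $\varPhi\circ\beta_z=\beta'_z\circ\varPhi$ on the generators. For $f\in C_0(X)$,
\[
\varPhi(\beta_z(t^0(f)))=\varPhi(t^0(f))=t'^0(f|_{X'})=\beta'_z(t'^0(f|_{X'}))=\beta'_z(\varPhi(t^0(f))),
\]
and for $\xi\in C_c(U)$,
\[
\varPhi(\beta_z(t^1(\xi)))=\varPhi(z\,t^1(\xi))=z\,t'^1(\xi|_{U'})=\beta'_z(t'^1(\xi|_{U'}))=\beta'_z(\varPhi(t^1(\xi))).
\]
Since the images of $t^0$ and $t^1$ generate $C^*(\Sigma)$ (cf.\ Lemma~\ref{Lem:cspa}) and both $\varPhi\circ\beta_z$ and $\beta'_z\circ\varPhi$ are bounded $*$-homomorphisms, the identity extends to all of $C^*(\Sigma)$.

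Finally, for any $a\in I_{X'}=\ker\varPhi$ and any $z\in\T$,
\[
\varPhi(\beta_z(a))=\beta'_z(\varPhi(a))=\beta'_z(0)=0,
\]
so $\beta_z(a)\in I_{X'}$; thus $\beta_z(I_{X'})\subset I_{X'}$. Applying the same inclusion to $z^{-1}$ and using that $\beta$ is a group action gives the reverse inclusion, hence $\beta_z(I_{X'})=I_{X'}$ for every $z\in\T$, proving gauge-invariance. There is no substantive obstacle here; the argument is essentially a bookkeeping verification on generators, with the only subtlety being to invoke the existence of the gauge action on $C^*(\Sigma')$, which comes from the same universal property used to define $\beta$.
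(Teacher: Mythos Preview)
Your proof is correct and follows essentially the same approach as the paper: the paper's one-line argument is that $\varPhi$ commutes with the gauge actions, and you have simply spelled out this intertwining on generators and drawn the consequence for $\ker\varPhi$.
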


\begin{proof}
This follows from the fact that 
the surjection $\varPhi\colon C^*(\Sigma) \to C^*(\Sigma')$ 
in Lemma~\ref{Lem:StoS'} commutes with the gauge actions. 
\end{proof}

\begin{lemma}\label{Lem:IXICI}
For an ideal $I$ of $C^*(\Sigma)$, 
we have $I_{X_I}\subset I$. 
\end{lemma}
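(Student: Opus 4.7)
The goal is to exhibit a $*$-ho\-mo\-mor\-phism $\Psi\colon C^*(\Sigma')\to C^*(\Sigma)/I$, where $\Sigma'=(X_I,\sigma|_{U\cap X_I})$, satisfying $\Psi\circ\varPhi=q$ for $q\colon C^*(\Sigma)\to C^*(\Sigma)/I$ the quotient map. Once such a $\Psi$ is produced, $I_{X_I}=\ker\varPhi\subset\ker q=I$ is immediate. To build $\Psi$, I plan to invoke the universal property of $C^*(\Sigma')$ by constructing compatible maps $\bar t^0\colon C_0(X_I)\to C^*(\Sigma)/I$ and $\bar t^1\colon C_c(U\cap X_I)\to C^*(\Sigma)/I$.

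For $\bar t^0$, the definition of $X_I$ together with the injectivity of $t^0$ (Lemma~\ref{Lem:inj}) shows that $q\circ t^0$ annihilates exactly $C_0(X\setminus X_I)$, so it factors through the restriction $C_0(X)\to C_0(X_I)$, defining $\bar t^0$. The core step---and what I expect to be the main obstacle---is defining $\bar t^1$ unambiguously. This reduces to showing that $t^1(\xi)\in I$ whenever $\xi\in C_c(U)$ vanishes on $U\cap X_I$. For such $\xi$, I compute $\ip{\xi}{\xi}(x)=\sum_{\sigma(y)=x}|\xi(y)|^2$; if $\xi(y)\neq 0$ then $y\notin X_I$, and the $\sigma$-invariance of $X_I$ forces $x=\sigma(y)\notin X_I$. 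Hence $\ip{\xi}{\xi}\in C_0(X\setminus X_I)$, so $t^1(\xi)^*t^1(\xi)=t^0(\ip{\xi}{\xi})\in I$, and the $C^*$-identity applied in $C^*(\Sigma)/I$ gives $t^1(\xi)\in I$. Combined with surjectivity of the restriction $C_c(U)\to C_c(U\cap X_I)$ (Tietze extension in the locally compact Hausdorff setting, cutting off by a bump with compact support in $U$), this produces a well-defined linear $\bar t^1$.

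The relations (i) and (ii) of Definition~\ref{Def:C*S} for $\Sigma'$ are then inherited from those of $(t^0,t^1)$, because restriction of functions commutes with the pointwise operations appearing in $\ip{\cdot}{\cdot}$ and in $\xi\overline{\eta}$: the products computed inside $C^*(\Sigma)/I$ from $\bar t^0,\bar t^1$ using representatives in $C_0(X),C_c(U)$ agree with the images under $q$ of the corresponding products in $C^*(\Sigma)$, whose restrictions to $X_I$ are the required functions. Universality of $C^*(\Sigma')$ thus yields the $*$-ho\-mo\-mor\-phism $\Psi$, and the identity $\Psi\circ\varPhi=q$ holds by construction on the generating set $\{t^0(f):f\in C_0(X)\}\cup\{t^1(\xi):\xi\in C_c(U)\}$, hence on all of $C^*(\Sigma)$, completing the argument.
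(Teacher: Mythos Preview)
Your proof is correct and follows essentially the same route as the paper: construct the pair $(\bar t^0,\bar t^1)$ on $C_0(X_I)$ and $C_c(U\cap X_I)$ with values in $C^*(\Sigma)/I$, verify well-definedness of $\bar t^1$ via $\ip{\xi}{\xi}\in C_0(X\setminus X_I)$, invoke universality of $C^*(\Sigma')$ to obtain $\Psi$, and conclude $I_{X_I}=\ker\varPhi\subset\ker q=I$. Your treatment is in fact slightly more explicit than the paper's (spelling out the role of $\sigma$-invariance in the inner-product computation and the use of Tietze for surjectivity of restriction), but the strategy is identical.
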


\begin{proof}
Takes an deal $I$ of $C^*(\Sigma)$, 
and denote the natural surjection 
by $\pi\colon C^*(\Sigma) \to C^*(\Sigma)/I$. 
We set $\Sigma_I:=(X_I,\sigma_I)$ 
where $\sigma_I$ is the restriction of $\sigma$ to $U_I:=U\cap X_I$. 
By the definition of $X_I$, 
we can define a \shom $t'^0\colon C_0(X_I) \to C^*(\Sigma)/I$ 
such that $t'^0(f|_{X_I})=\pi(t^0(f))$ for all $f \in C_0(X)$. 
We can also define a linear map $t'^1\colon C_c(U_I) \to C^*(\Sigma)/I$ 
such that $t'^1(\xi|_{U_I})=\pi(t^1(\xi))$ for all $\xi \in C_c(U)$ 
because for $\eta \in C_c(U)$ with $\eta|_{U_I}=0$ 
we have $\ip{\eta}{\eta} \in C_0(X \setminus X_I)$ 
and hence $t^1(\eta)\in I$. 
The pair $(t'^0,t'^1)$ satisfies 
(i) and (ii) in Definition~\ref{Def:C*S} for $\Sigma_I$. 
Hence we get a \shom $\varPsi\colon C^*(\Sigma_I) \to C^*(\Sigma)/I$ 
with $\pi=\varPsi\circ \varPhi_I$ 
where $\varPhi_I\colon C^*(\Sigma) \to C^*(\Sigma_I)$ 
is the surjection whose kernel is $I_{X_I}$. 
Hence we get $I_{X_I} \subset I$. 
\end{proof}

\begin{proposition}\label{Prop:IXI}
For an ideal $I$ of $C^*(\Sigma)$, 
we have $I_{X_I} = I$ if and only if 
$I$ is gauge-invariant. 
\end{proposition}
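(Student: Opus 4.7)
If $I_{X_I}=I$, then $I$ is gauge-invariant by Lemma~\ref{Lem:IXinv}, so this direction is immediate. The plan for the converse is to reduce matters to a gauge-invariant uniqueness theorem for \CA s of topological graphs, as recorded in Appendix~\ref{Sec:UT}.

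Assume $I$ is gauge-invariant. The proof of Lemma~\ref{Lem:IXICI} already furnishes $I_{X_I}\subset I$ together with a surjective \shom $\varPsi\colon C^*(\Sigma_I)\to C^*(\Sigma)/I$ satisfying $\pi=\varPsi\circ\varPhi_I$, where $\pi\colon C^*(\Sigma)\to C^*(\Sigma)/I$ is the quotient map and $\varPhi_I\colon C^*(\Sigma)\to C^*(\Sigma_I)$ is the surjection with kernel $I_{X_I}$. It therefore suffices to show $\varPsi$ is injective, for then $I=\ker\pi=\varPhi_I^{-1}(\ker\varPsi)=\varPhi_I^{-1}(0)=I_{X_I}$.

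I would establish injectivity of $\varPsi$ by invoking the gauge-invariant uniqueness theorem for \CA s of topological graphs (Appendix~\ref{Sec:UT}); this applies since $C^*(\Sigma_I)$ is the \Ca of the topological graph associated with $\Sigma_I$. Two hypotheses need to be checked. First, $\varPsi$ should intertwine the gauge action on $C^*(\Sigma_I)$ with the gauge action on $C^*(\Sigma)/I$ induced by the gauge-invariance of $I$; this will follow because $\pi$ and $\varPhi_I$ are both gauge-equivariant and $\varPhi_I$ is surjective. Second, the restriction of $\varPsi$ to the image of $C_0(X_I)$ must be injective; but for $f\in C_0(X)$ one has $\varPsi(t'^0(f|_{X_I}))=\pi(t^0(f))$, and $\pi\circ t^0$ has kernel exactly $C_0(X\setminus X_I)$ by the defining property of $X_I$, so the induced map on $C_0(X_I)$ is injective. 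The main obstacle is really just to invoke the right version of the uniqueness theorem — one that imposes no second-countability hypothesis on the vertex space, as in \cite{K1} — and that is why it is collected separately in Appendix~\ref{Sec:UT}.
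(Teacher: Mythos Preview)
Your proposal is correct and follows essentially the same route as the paper's proof: both invoke Lemma~\ref{Lem:IXinv} for the forward direction, and for the converse both use the map $\varPsi$ from the proof of Lemma~\ref{Lem:IXICI} and verify the hypotheses of the gauge-invariant uniqueness theorem (Proposition~\ref{Prop:GIUT}) to conclude that $\varPsi$ is injective. Your write-up simply spells out a few details (the equivariance argument via surjectivity of $\varPhi_I$, and the kernel computation for $\pi\circ t^0$) that the paper leaves implicit.
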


\begin{proof}
The ``only if'' part follows from Lemma~\ref{Lem:IXinv}. 
Let $I$ be a gauge-invariant ideal $I$ of $C^*(\Sigma)$. 
Then we can define an action of $\T$ on $C^*(\Sigma)/I$ 
so that the natural surjection $\pi\colon C^*(\Sigma) \to C^*(\Sigma)/I$ 
becomes equivariant. 
Then the map $\varPsi\colon C^*(\Sigma_I) \to C^*(\Sigma)/I$ 
in the proof of Lemma~\ref{Lem:IXICI} becomes also equivariant. 
The map $\varPsi$ is injective on $t^0(C_0(X_I))$ by the definition of $X_I$. 
Hence by Proposition~\ref{Prop:GIUT} $\varPsi$ is injective. 
Therefore we have $I_{X_I} = I$. 
\end{proof}

\begin{proposition}\label{Prop:bij}
Through the maps $I\mapsto X_I$ and $X'\mapsto I_{X'}$, 
the set of gauge-invariant ideals 
corresponds bijectively to the set of closed $\sigma$-invariant subsets 
of $X$. 
\end{proposition}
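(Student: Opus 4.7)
The plan is to package together the four preceding results that are individually tailored to each direction of the correspondence. First I would verify that the two maps genuinely go between the claimed sets: the map $I \mapsto X_I$ lands in closed $\sigma$-invariant subsets by the proposition immediately preceding Lemma~\ref{Lem:StoS'}, and the map $X' \mapsto I_{X'}$ lands in gauge-invariant ideals by Lemma~\ref{Lem:IXinv}. So both assignments are well-defined between the right collections.

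Next I would check that the two compositions are the identity. In one direction, for a closed $\sigma$-invariant subset $X' \subset X$, Proposition~\ref{Prop:XIX} gives $X_{I_{X'}} = X'$. In the other direction, for a gauge-invariant ideal $I$ of $C^*(\Sigma)$, Proposition~\ref{Prop:IXI} yields $I_{X_I} = I$. Together these show the two maps are mutual inverses, completing the bijection.

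There is essentially no obstacle at this stage: all the real content has been absorbed into Propositions~\ref{Prop:XIX} and \ref{Prop:IXI} (the latter in turn depending on the gauge-invariant uniqueness theorem Proposition~\ref{Prop:GIUT} quoted from the appendix). The proof of \ref{Prop:bij} itself is a one-paragraph assembly. The only thing worth being careful about is to not implicitly claim bijectivity on the larger set of all ideals: the equality $I_{X_I} = I$ fails in general for non-gauge-invariant ideals (indeed the ``only if'' direction of Proposition~\ref{Prop:IXI} shows precisely this), which is why the correspondence is restricted to gauge-invariant ideals, and this is exactly what the statement asserts.
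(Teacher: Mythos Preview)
Your proposal is correct and follows essentially the same approach as the paper: the paper's proof is the single sentence ``This follows from Proposition~\ref{Prop:XIX} and Proposition~\ref{Prop:IXI},'' and your version simply makes explicit the well-definedness checks (via Lemma~\ref{Lem:IXinv} and the $\sigma$-invariance proposition) that the paper leaves implicit.
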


\begin{proof}
This follows from Proposition~\ref{Prop:XIX} and Proposition~\ref{Prop:IXI}. 
\end{proof}

Now we have the following. 

\begin{lemma}\label{Lem:Ieasy}
For closed $\sigma$-invariant subsets $X_1$ and $X_2$ of $X$, 
we have $I_{X_1\cup X_2} = I_{X_1}\cap I_{X_2}$. 
If $X_1 \subset X_2$, then $I_{X_1}\supset I_{X_2}$. 
\end{lemma}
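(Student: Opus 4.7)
The plan is to deduce both assertions from the bijection established in Proposition~\ref{Prop:bij} together with the trivial observations in Lemma~\ref{Lem:Xeasy}.

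First I would handle the identity $I_{X_1\cup X_2}=I_{X_1}\cap I_{X_2}$. By Lemma~\ref{Lem:IXinv} each of $I_{X_1}$, $I_{X_2}$ and $I_{X_1\cup X_2}$ is gauge-invariant, and intersections of gauge-invariant ideals are again gauge-invariant; so both sides of the claimed equality lie in the set of gauge-invariant ideals, on which the map $I\mapsto X_I$ is injective by Proposition~\ref{Prop:bij}. It therefore suffices to compute the associated closed $\sigma$-invariant sets. By Proposition~\ref{Prop:XIX} we have $X_{I_{X_1\cup X_2}}=X_1\cup X_2$, while Lemma~\ref{Lem:Xeasy} combined with Proposition~\ref{Prop:XIX} gives
\[
X_{I_{X_1}\cap I_{X_2}}=X_{I_{X_1}}\cup X_{I_{X_2}}=X_1\cup X_2.
\]
Since both ideals correspond to the same closed $\sigma$-invariant set, the bijection of Proposition~\ref{Prop:bij} forces them to coincide.

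For the second assertion, assume $X_1\subset X_2$. Then $X_1\cup X_2=X_2$, so applying the first part yields $I_{X_2}=I_{X_1\cup X_2}=I_{X_1}\cap I_{X_2}$, which is exactly the inclusion $I_{X_2}\subset I_{X_1}$.

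There is no real obstacle here; the only point to be careful about is checking that $I_{X_1}\cap I_{X_2}$ is gauge-invariant (which is immediate, since each $\beta_z$ is an automorphism preserving both factors) so that Proposition~\ref{Prop:bij} is applicable. Everything else is a formal consequence of the already-established bijection between gauge-invariant ideals and closed $\sigma$-invariant subsets.
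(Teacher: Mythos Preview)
Your proof is correct and follows essentially the same approach as the paper: compute that both ideals have associated set $X_1\cup X_2$ via Lemma~\ref{Lem:Xeasy} and Proposition~\ref{Prop:XIX}, then use gauge-invariance to conclude. The only cosmetic difference is that the paper invokes Proposition~\ref{Prop:IXI} directly (writing $I_{X_1}\cap I_{X_2}=I_{X_{I_{X_1}\cap I_{X_2}}}$) rather than phrasing it through the bijection of Proposition~\ref{Prop:bij}, but this is the same argument.
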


\begin{proof}
By Lemma~\ref{Lem:Xeasy} and Proposition~\ref{Prop:XIX}, 
we have
\begin{align*}
X_{I_{X_1}\cap I_{X_2}}=X_{I_{X_1}}\cup X_{I_{X_2}}=X_1\cup X_2.
\end{align*}
Since $I_{X_1}\cap I_{X_2}$ is gauge-invariant, 
we have 
\[
I_{X_1}\cap I_{X_2}=I_{X_{I_{X_1}\cap I_{X_2}}}=I_{X_1\cup X_2}
\]
by Proposition~\ref{Prop:IXI}. 
If $X_1 \subset X_2$, then $X_2 = X_1\cup X_2$. 
Hence we have $I_{X_2}=I_{X_1\cup X_2}=I_{X_1}\cap I_{X_2}$. 
This shows $I_{X_1}\supset I_{X_2}$. 
\end{proof}

\begin{definition}
For a closed $\sigma$-invariant set $X'$, 
we say $X'$ is essentially free 
if the SGDS $\Sigma'=(X',\sigma|_{U\cap X'})$ 
is essentially free as defined in Definition~\ref{Def:esfree}. 
\end{definition}

\begin{proposition}\label{Prop:IXI2}
For an ideal $I$ of $C^*(\Sigma)$, 
if $X_I$ is essentially free then 
we have $I_{X_I} = I$. 
\end{proposition}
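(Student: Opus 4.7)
The plan is to mimic the proof of Proposition~\ref{Prop:IXI}, but replacing the gauge-invariant uniqueness theorem by a Cuntz--Krieger-type uniqueness theorem for essentially free SGDSs (which is the content of the appendix).

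First I would recall the construction from Lemma~\ref{Lem:IXICI}: using the natural surjection $\pi\colon C^*(\Sigma)\to C^*(\Sigma)/I$, we obtain a \shom $t'^0\colon C_0(X_I)\to C^*(\Sigma)/I$ and a linear map $t'^1\colon C_c(U_I)\to C^*(\Sigma)/I$ satisfying (i) and (ii) of Definition~\ref{Def:C*S} for $\Sigma_I=(X_I,\sigma|_{U\cap X_I})$. Universality produces a \shom $\varPsi\colon C^*(\Sigma_I)\to C^*(\Sigma)/I$ factoring $\pi$ as $\pi=\varPsi\circ\varPhi_I$, where $\varPhi_I\colon C^*(\Sigma)\to C^*(\Sigma_I)$ is the surjection with kernel $I_{X_I}$. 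Therefore $I_{X_I}\subset I$, with equality precisely when $\varPsi$ is injective.

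Next I would verify that $\varPsi$ is injective on the copy of $C_0(X_I)$ sitting inside $C^*(\Sigma_I)$. By the very definition of $X_I$, an element $f\in C_0(X)$ satisfies $t^0(f)\in I$ iff $f$ vanishes on $X_I$; since every element of $C_0(X_I)$ extends to an element of $C_0(X)$, this says exactly that $\varPsi\circ t'^{0}_I=\pi\circ t^0$ is zero on $f|_{X_I}$ only when $f|_{X_I}=0$. Equivalently, $\varPsi$ restricted to the image of the universal $C_0(X_I)$ inside $C^*(\Sigma_I)$ is injective.

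Now the hypothesis that $X_I$ is essentially free is used: by definition this is precisely the statement that $\Sigma_I$ is essentially free in the sense of Definition~\ref{Def:esfree}. The Cuntz--Krieger-type uniqueness theorem for essentially free SGDSs quoted in Appendix~\ref{Sec:UT} then applies to $\varPsi$: any \shom out of $C^*(\Sigma_I)$ that is injective on the canonical copy of $C_0(X_I)$ is injective on all of $C^*(\Sigma_I)$. Hence $\varPsi$ is injective, so $I=\ker\pi=\ker(\varPsi\circ\varPhi_I)=\ker\varPhi_I=I_{X_I}$.

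The only genuine content beyond Proposition~\ref{Prop:IXI} is the invocation of the essentially-free uniqueness theorem in the appendix; the rest is a routine factorization argument identical in shape to the gauge-invariant case. The main step to be careful about is simply confirming that the essential freeness hypothesis on the subsystem $\Sigma_I$ is exactly what the appendix's uniqueness theorem requires, which is guaranteed by the definition placed right before the proposition.
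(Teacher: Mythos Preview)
Your proposal is correct and follows essentially the same approach as the paper's own proof, which simply says to argue as in Proposition~\ref{Prop:IXI} but replace Proposition~\ref{Prop:GIUT} with Proposition~\ref{Prop:CKUT}. You have spelled out precisely that substitution, including the factorization $\pi=\varPsi\circ\varPhi_I$ and the verification that $\varPsi$ is injective on $C_0(X_I)$, so there is nothing to add.
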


\begin{proof}
The proof goes similarly as in Proposition~\ref{Prop:IXI} 
using Proposition~\ref{Prop:CKUT} instead of Proposition~\ref{Prop:GIUT}. 
\end{proof}

\section{Prime ideals}\label{Sec:prime}

An ideal $I$ of a \Ca $A$ is said to be {\em prime} 
if $I \neq A$ and for two ideals $I_1,I_2$ of a \Ca $A$ 
with $I_1 \cap I_2 \subset I$ we have either $I_1 \subset I$ 
or $I_2 \subset I$. 
A primitive ideal is prime \cite[II.6.1.11]{B}. 
The converse is true when $X$ is second countable (cf.\ \cite[II.6.5.15]{B}), 
but not true in general 
(see \cite[Example~13.3]{K3} and \cite[Theorem~5.4]{Ksh}). 

\begin{definition}
A closed $\sigma$-invariant set $X$ is said to be 
{\em irreducible} if it is non-empty and 
we have either $X=X_1$ or $X=X_2$ 
for two closed $\sigma$-invariant sets $X_1,X_2$ 
satisfying $X=X_1 \cup X_2$. 
\end{definition}

\begin{proposition}\label{Prop:prir}
If an ideal $I$ of $C^*(\Sigma)$ is prime, 
then $X_I$ is irreducible. 
\end{proposition}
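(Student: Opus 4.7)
The plan is to reduce irreducibility of $X_I$ to primeness of $I$ via the Galois-type correspondence already established between closed $\sigma$-invariant subsets of $X$ and gauge-invariant ideals (Lemmas~\ref{Lem:Xeasy} and~\ref{Lem:Ieasy}, together with Lemma~\ref{Lem:IXICI}).

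First I would dispose of non-emptiness. Suppose $X_I=\emptyset$. Then $t^0(C_0(X))=t^0(C_0(X\setminus X_I))\subset I$. For any $n\in\N$ and $\xi\in C_c(U_n)$, Lemma~\ref{Lem:t*t} gives $t^n(\xi)^*t^n(\xi)=t^0(\ip{\xi}{\xi}_n)\in I$, and since $I$ is a closed two-sided ideal this forces $t^n(\xi)\in I$; the product $t^n(\xi)t^m(\eta)^*$ then also lies in $I$. By Lemma~\ref{Lem:cspa} we conclude $I=C^*(\Sigma)$, contradicting the assumption that $I$ is prime.

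Next I would check the defining condition of irreducibility. Let $X_1,X_2$ be closed $\sigma$-invariant subsets of $X$ with $X_I=X_1\cup X_2$. By Lemma~\ref{Lem:Ieasy},
\[
I_{X_1}\cap I_{X_2}=I_{X_1\cup X_2}=I_{X_I},
\]
and Lemma~\ref{Lem:IXICI} yields $I_{X_I}\subset I$. Hence $I_{X_1}\cap I_{X_2}\subset I$. Primeness of $I$ then gives $I_{X_1}\subset I$ or $I_{X_2}\subset I$. Applying Lemma~\ref{Lem:Xeasy} and Proposition~\ref{Prop:XIX}, in the first case
\[
X_1=X_{I_{X_1}}\supset X_I,
\]
and combined with $X_1\subset X_I$ this gives $X_1=X_I$; the second case analogously yields $X_2=X_I$.

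There is essentially no obstacle here beyond bookkeeping: the hard work is packaged inside Lemmas~\ref{Lem:Xeasy}, \ref{Lem:IXICI}, \ref{Lem:Ieasy} and Proposition~\ref{Prop:XIX}, which were precisely set up to make this reduction mechanical. The only mildly subtle point is the non-emptiness argument, which uses Lemma~\ref{Lem:cspa} and the fact that prime ideals must be proper.
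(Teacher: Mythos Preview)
Your proof is correct and follows essentially the same route as the paper's: both use Lemma~\ref{Lem:Ieasy} and Lemma~\ref{Lem:IXICI} to get $I_{X_1}\cap I_{X_2}\subset I$, invoke primeness, and then apply Lemma~\ref{Lem:Xeasy} and Proposition~\ref{Prop:XIX} to recover $X_i=X_I$. The only difference is that the paper asserts in one line that $X_I=\emptyset$ implies $I=C^*(\Sigma)$, whereas you spell this out via Lemma~\ref{Lem:t*t} and Lemma~\ref{Lem:cspa}; this extra detail is correct and harmless.
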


\begin{proof}
If $X_I = \emptyset$, then $I=A$ which contradicts 
that $I$ is prime. 
Hence $X_I \neq \emptyset$. 
Take two closed $\sigma$-invariant sets $X_1,X_2$ 
satisfying $X_I=X_1 \cup X_2$. 
Then 
\[
I_{X_1}\cap I_{X_2} = I_{X_1\cup X_2}=I_{X_I}\subset I
\]
by Lemma~\ref{Lem:Ieasy} and Lemma~\ref{Lem:IXICI}. 
Since $I$ is prime, 
we have either $I_{X_1} \subset I$ or $I_{X_2} \subset I$. 
When $I_{X_1} \subset I$, 
we have $X_I \subset X_{I_{X_1}} = X_1 \subset X_I$ 
by Lemma~\ref{Lem:Xeasy} and Proposition~\ref{Prop:XIX}, 
and hence $X_1=X_I$. 
Similarly we have $X_2=X_I$ when $I_{X_2} \subset I$. 
Thus $X_I$ is irreducible. 
\end{proof}

\begin{proposition}\label{Prop:XP}
For $(x,\gamma)\in X\times\T$, 
we have $X_{P_{(x,\gamma)}}=\overline{\Orb(x)}$. 
\end{proposition}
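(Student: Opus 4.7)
The plan is a direct unwinding of definitions; no clever idea is required. First I would compute how $\pi_{(x,\gamma)}$ acts on an element of $t^0(C_0(X))$: by construction, $\pi_{(x,\gamma)}(t^0(f)) = t^0_{(x,\gamma)}(f)$ is the diagonal operator on $H_x$ with $t^0_{(x,\gamma)}(f)\delta_y = f(y)\delta_y$ for $y\in\Orb(x)$. Since $\{\delta_y\}_{y\in\Orb(x)}$ is an orthonormal basis of $H_x$, this operator vanishes exactly when $f(y)=0$ for every $y\in\Orb(x)$, and by continuity of $f$ this is the same as $f$ vanishing on $\overline{\Orb(x)}$, that is, $f\in C_0(X\setminus\overline{\Orb(x)})$.

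Therefore
\[
t^0(C_0(X))\cap P_{(x,\gamma)} \;=\; t^0\bigl(C_0(X\setminus\overline{\Orb(x)})\bigr).
\]
Comparing with the defining equation
$t^0(C_0(X\setminus X_{P_{(x,\gamma)}}))=t^0(C_0(X))\cap P_{(x,\gamma)}$ of $X_{P_{(x,\gamma)}}$, and using that $t^0$ is injective (Lemma~\ref{Lem:inj}) so that $t^0(C_0(W))$ determines the open set $W\subset X$, we conclude $X_{P_{(x,\gamma)}}=\overline{\Orb(x)}$.

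There is no real obstacle here: the only thing to watch is that the defining relation for $X_I$ only pins down a unique closed subset once one knows that $t^0$ is isometric on $C_0(X)$, which is exactly the content of Lemma~\ref{Lem:inj} that has already been established.
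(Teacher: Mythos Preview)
Your proof is correct and is essentially the same as the paper's, just with more detail spelled out: the paper's one-line argument ``This follows from $\ker t^0_{(x,\gamma)} = C_0(X\setminus \overline{\Orb(x)})$'' is exactly what you compute in your first paragraph, and the comparison with the defining equation of $X_I$ is the implicit second step. Your explicit mention of the injectivity of $t^0$ (Lemma~\ref{Lem:inj}) to ensure that $t^0(C_0(W))$ determines $W$ is a fair point, though the paper treats this as already built into the well-posedness of the definition of $X_I$.
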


\begin{proof}
This follows from 
$\ker t^0_{(x,\gamma)} = C_0(X\setminus \overline{\Orb(x)})$. 
\end{proof}

\begin{proposition}
For $x \in X$, 
$\overline{\Orb(x)}$ is an irreducible 
closed $\sigma$-invariant set.
\end{proposition}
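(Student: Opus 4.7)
The plan is to verify two assertions: that $\overline{\Orb(x)}$ is $\sigma$-invariant, and that it is irreducible. Non-emptiness is immediate since $x\in\Orb(x)\subset\overline{\Orb(x)}$.

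For $\sigma$-invariance, I would first check that $\Orb(x)$ itself is $\sigma$-invariant directly from the definition: if $y\in\Orb(x)\cap U$ satisfies $\sigma^n(y)=\sigma^m(x)$, then either $n\geq 1$ and $\sigma^{n-1}(\sigma(y))=\sigma^m(x)$, or $n=0$ so that $y=\sigma^m(x)$ and $\sigma(y)=\sigma^{m+1}(x)$; conversely if $y\in U$ and $\sigma^n(\sigma(y))=\sigma^m(x)$ then $\sigma^{n+1}(y)=\sigma^m(x)$. Next I would pass to the closure. One direction is continuity: if $y\in\overline{\Orb(x)}\cap U$, pick a net $y_\alpha\to y$ in $\Orb(x)$; eventually $y_\alpha\in U$ since $U$ is open, and $\sigma(y_\alpha)\to\sigma(y)$ with $\sigma(y_\alpha)\in\Orb(x)$, so $\sigma(y)\in\overline{\Orb(x)}$.

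The other direction is the delicate step and uses that $\sigma$ is a local homeomorphism. Given $y\in U$ with $\sigma(y)\in\overline{\Orb(x)}$, choose an open neighborhood $V\subset U$ of $y$ on which $\sigma$ is a homeomorphism onto the open set $\sigma(V)$. For any open neighborhood $W$ of $y$, the set $\sigma(W\cap V)$ is open and contains $\sigma(y)$, hence meets $\Orb(x)$. Any preimage $y'\in W\cap V$ of a point in $\sigma(W\cap V)\cap\Orb(x)$ then satisfies $\sigma(y')\in\Orb(x)$, whence $y'\in\Orb(x)$ by the $\sigma$-invariance of $\Orb(x)$. Thus every neighborhood of $y$ meets $\Orb(x)$, so $y\in\overline{\Orb(x)}$.

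For irreducibility, suppose $\overline{\Orb(x)}=X_1\cup X_2$ with $X_1,X_2$ closed $\sigma$-invariant sets. Since $x\in\overline{\Orb(x)}$, we may assume $x\in X_1$. I claim $\Orb(x)\subset X_1$: given $y\in\Orb(x)$ with $\sigma^n(y)=\sigma^m(x)$, iterated forward application of $\sigma$-invariance to $x$ (in the domains $U_k$) yields $\sigma^m(x)\in X_1$, and then iterated backward application to $y\in U_n$ yields $y\in X_1$. Taking closures gives $\overline{\Orb(x)}\subset X_1$, so $X_1=\overline{\Orb(x)}$.

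The main obstacle is the backward part of $\sigma$-invariance for the closure, namely deducing $y\in\overline{\Orb(x)}$ from $\sigma(y)\in\overline{\Orb(x)}$; this would fail for a merely continuous $\sigma$ and essentially requires the openness provided by the local-homeomorphism hypothesis. Everything else reduces to unwinding the definitions.
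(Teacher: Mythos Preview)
Your proof is correct and matches the paper's direct argument almost verbatim: the paper notes that $\Orb(x)$ is $\sigma$-invariant and that the local-homeomorphism property passes this to the closure (you spell out the backward step that the paper leaves implicit), then argues irreducibility by observing that whichever of $X_1,X_2$ contains $x$ must contain $\Orb(x)$ and hence its closure. The paper also offers a one-line indirect proof via the $C^*$-algebraic machinery, deducing irreducibility of $\overline{\Orb(x)}=X_{P_{(x,\gamma)}}$ from the primeness of the primitive ideal $P_{(x,\gamma)}$.
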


\begin{proof}
This follows from Proposition~\ref{Prop:prir}
and Proposition~\ref{Prop:XP}. 
One can also show this directly as follows. 
Since $\Orb(x)$ is $\sigma$-invariant 
and since $\sigma$ is locally homeomorphic, 
$\overline{\Orb(x)}$ is closed and $\sigma$-invariant. 
Take two closed $\sigma$-invariant sets $X_1,X_2$ 
satisfying $\overline{\Orb(x)}=X_1 \cup X_2$. 
Then either $x \in X_1$ or $x \in X_2$. 
When $x \in X_1$ we have $X_1=\overline{\Orb(x)}$, 
whereas when $x \in X_2$ we have $X_2=\overline{\Orb(x)}$. 
Thus $\overline{\Orb(x)}$ is irreducible. 
\end{proof}

\begin{proposition}
Let $X'$ be an irreducible closed $\sigma$-invariant set. 
If $X'$ is essentially free, 
then $I_{X'}$ is prime. 
\end{proposition}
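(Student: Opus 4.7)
The plan is to argue by contradiction, transporting the situation into the quotient $C^*(\Sigma')$ and producing a decomposition of $X'$ into two proper closed $\sigma$-invariant subsets. Concretely, I would suppose $I_1, I_2$ are ideals of $C^*(\Sigma)$ with $I_1 \cap I_2 \subset I_{X'}$ but neither $I_k$ contained in $I_{X'}$, and set $J_k := \varPhi(I_k)$, where $\varPhi\colon C^*(\Sigma) \to C^*(\Sigma')$ is the surjection from Lemma~\ref{Lem:StoS'} with kernel $I_{X'}$. Then the $J_k$ are nonzero ideals of $C^*(\Sigma')$, and from $I_1 I_2 \subset I_1 \cap I_2 \subset \ker \varPhi$ I immediately obtain $J_1 J_2 = 0$.

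To each $J_k$ I would attach, exactly as in Section~\ref{Sec:gii} but applied to the SGDS $\Sigma'$ and its universal pair $(t'^0, t'^1)$, the closed $\sigma$-invariant subset $Y_k$ of $X'$ characterised by $t'^0(C_0(X'\setminus Y_k)) = J_k \cap t'^0(C_0(X'))$. From $J_1 J_2 = 0$ together with the injectivity of $t'^0$ (Lemma~\ref{Lem:inj} applied to $\Sigma'$), every product $f_1 f_2$ with $f_k \in C_0(X' \setminus Y_k)$ must vanish; this forces the open sets $X'\setminus Y_1$ and $X'\setminus Y_2$ to be disjoint, i.e.\ $Y_1 \cup Y_2 = X'$.

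The key step — and the only one that uses essential freeness — is to show $Y_k \subsetneq X'$. I would invoke the Cuntz--Krieger type uniqueness theorem, Proposition~\ref{Prop:CKUT}, for the essentially free SGDS $\Sigma'$: it guarantees that any nonzero ideal of $C^*(\Sigma')$ meets $t'^0(C_0(X'))$ nontrivially, so $J_k \neq 0$ implies $X'\setminus Y_k \neq \emptyset$, hence $Y_k \subsetneq X'$. Combining $Y_1, Y_2 \subsetneq X'$ with $Y_1 \cup Y_2 = X'$ directly contradicts irreducibility of $X'$, and the proof is complete. The main obstacle is really just the correct appeal to the deferred uniqueness theorem in the appendix; everything else is routine manipulation of ideals using lemmas already established.
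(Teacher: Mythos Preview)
Your argument is correct, and at its core it relies on the same ingredient as the paper --- the Cuntz--Krieger uniqueness theorem for the essentially free system $\Sigma'$ --- but the packaging is a little different. The paper stays inside $C^*(\Sigma)$: it considers the ideals $I_k + I_{X'}$, uses Lemma~\ref{Lem:Xeasy} to write $X_{I_1+I_{X'}} \cup X_{I_2+I_{X'}} = X'$, applies irreducibility to force one of these to equal $X'$, and then invokes Proposition~\ref{Prop:IXI2} (which is the wrapper around Proposition~\ref{Prop:CKUT}) to deduce $I_k + I_{X'} = I_{X'}$. You instead push everything into the quotient $C^*(\Sigma')$, exploit $J_1 J_2 = 0$ directly to see that the complements $X'\setminus Y_k$ are disjoint, and appeal to Proposition~\ref{Prop:CKUT} itself to get $Y_k \subsetneq X'$. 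Your route is slightly more self-contained (it avoids quoting Proposition~\ref{Prop:IXI2}), while the paper's route makes better use of the $I \mapsto X_I$ machinery already set up in Section~\ref{Sec:gii}. One small omission: you should record that $I_{X'} \neq C^*(\Sigma)$, which follows from $X' \neq \emptyset$ and Proposition~\ref{Prop:XIX}; this is part of the definition of a prime ideal.
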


\begin{proof}
Since $X' \neq \emptyset$, we have $I_{X'} \neq A$. 
Take ideals $I_1,I_2$ of $C^*(\Sigma)$ 
such that $I_1\cap I_2 \subset I_{X'}$. 
Then we have 
\[
X_{I_1+I_{X'}} \cup X_{I_2+I_{X'}} =
X_{(I_1+I_{X'})\cap (I_2+I_{X'})} = X_{I_{X'}}=X'
\]
by Lemma~\ref{Lem:Xeasy} and Proposition~\ref{Prop:XIX}. 
Since $X'$ is irreducible, either we have $X_{I_1+I_{X'}}=X'$ 
or $X_{I_2+I_{X'}}=X'$. 
When $X_{I_1+I_{X'}}=X'$ we have $I_1+I_{X'}=I_{X'}$ 
by Proposition~\ref{Prop:IXI2} 
because $X'$ is essentially free. 
This shows $I_1 \subset I_{X'}$. 
Similarly, when $X_{I_2+I_{X'}}=X'$ we have $I_2 \subset I_{X'}$. 
Thus $I_{X'}$ is prime. 
\end{proof}

From now on, we investigate 
which irreducible closed $\sigma$-invariant set $X'$ 
becomes essentially free. 

\begin{definition}
We set 
\[
\Per(\Sigma) := \{x \in X\mid l(x)=0, \ 
\text{$x$ is isolated in $\Orb(x)$}\}. 
\]
\end{definition}

\begin{definition}
We denote by $\cA(\Sigma)$ 
the set of all irreducible closed $\sigma$-invariant 
set which is not in the form $\overline{\Orb(x)}$ for $x \in \Per(\Sigma)$. 
\end{definition}

\begin{lemma}\label{Lem:notef}
For $x \in \Per(\Sigma)$, 
an irreducible closed $\sigma$-invariant $\overline{\Orb(x)}$ 
is not essentially free. 
\end{lemma}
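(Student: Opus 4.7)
The plan is to show that $x$ itself belongs to $\Per(\Sigma')$ for the restricted system $\Sigma' = (X',\sigma|_{U\cap X'})$ with $X' = \overline{\Orb(x)}$. Then by the hypothesized behavior of Definition~\ref{Def:esfree} (which is supposed to be obstructed precisely by such isolated periodic orbits), $\Sigma'$ is not essentially free, which by the definition preceding the lemma means $X'$ is not essentially free.

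First I would unpack the hypothesis $x \in \Per(\Sigma)$. Setting $n = p(x)$, the condition $l(x) = 0$ gives $\sigma^{n}(x) = x$, and in particular $x \in U_n \subset U$. The isolation condition supplies an open $V \subset X$ with $V \cap \Orb(x) = \{x\}$. I would also note that $\Orb_{\Sigma'}(x) = \Orb_\Sigma(x)$: the latter is already contained in $X'$, and the orbit relation is determined by $\sigma$ alone.

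Second, I would promote ``isolated in $\Orb(x)$'' to ``isolated in $\overline{\Orb(x)}$'' by a direct Hausdorff-separation argument. If some $y \in V \cap \overline{\Orb(x)}$ had $y \neq x$, Hausdorffness provides an open $W$ with $y \in W \subset V$ and $x \notin W$; then $W \cap \Orb(x) \subset V \cap \Orb(x) = \{x\}$ forces $W \cap \Orb(x) = \emptyset$, contradicting $y \in \overline{\Orb(x)}$. Hence $V \cap X' = \{x\}$, so $\{x\}$ is relatively open in $X'$.

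Third, since the dynamics of $\Sigma'$ is just the restriction, $l_{\Sigma'}(x) = 0$ and $\Orb_{\Sigma'}(x) = \Orb_\Sigma(x)$; the open set $V \cap X' = \{x\}$ witnesses that $x$ is isolated in $\Orb_{\Sigma'}(x)$ inside $X'$. Thus $x \in \Per(\Sigma')$, and invoking Definition~\ref{Def:esfree} concludes that $\Sigma'$ is not essentially free. The only nontrivial step is the Hausdorff upgrade in the second paragraph; the rest is bookkeeping. A secondary concern is whether Definition~\ref{Def:esfree} is phrased so that a single isolated periodic orbit suffices to fail essential freeness, but this is built into the purpose of singling out $\Per(\Sigma)$ by its present definition.
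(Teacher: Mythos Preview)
Your approach matches the paper's: show that $\{x\}$ is open in $X' = \overline{\Orb(x)}$ and that $l_{\Sigma'}(x) = 0$, then invoke Definition~\ref{Def:esfree}; your Hausdorff argument in the second paragraph fills in a step the paper's one-line proof leaves implicit. To resolve your secondary concern: Definition~\ref{Def:esfree} declares $\Sigma'$ essentially free iff the interior of $\{y \in X' : l_{\Sigma'}(y) = 0\}$ is empty, so the conclusion follows \emph{directly} from $\{x\}$ being open in $X'$ together with $l_{\Sigma'}(x)=0$ --- the passage through $\Per(\Sigma')$ is an unnecessary detour, since membership in $\Per(\Sigma')$ only records isolation in the orbit, which is weaker than what you actually proved.
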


\begin{proof}
In the SGDS $\Sigma'=(X',\sigma|_{U\cap X'})$ where $X'=\overline{\Orb(x)}$, 
$\{x\}$ is an open subset and $l(x)=0$. 
Hence $\Sigma'$ is not essentially free. 
\end{proof}

\begin{proposition}[{cf.\ \cite[Proposition~11.3]{K3}}]\label{Prop:efA}
For an irreducible closed $\sigma$-invariant set $X'$, 
$X'$ is essentially free
if and only if $X' \in \cA(\Sigma)$ 
\end{proposition}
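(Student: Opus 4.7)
The plan is to argue the equivalence contrapositively in both directions. The ``only if'' direction is immediate from Lemma~\ref{Lem:notef}: if $X' \notin \cA(\Sigma)$, then by definition $X' = \overline{\Orb(x)}$ for some $x \in \Per(\Sigma)$, and Lemma~\ref{Lem:notef} says this is not essentially free.

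For ``if'', suppose $\Sigma' = (X', \sigma|_{U \cap X'})$ is not essentially free. The first step is to extract from Definition~\ref{Def:esfree} a point $y \in X'$ with $l(y) = 0$ and $\{y\}$ open in $X'$ --- the same form of witness exhibited in the proof of Lemma~\ref{Lem:notef} --- possibly after an initial reduction inside the irreducible space $X'$. The central step is then the decomposition $X' = \overline{\Orb(y)} \cup \overline{X' \setminus \Orb(y)}$, both summands being closed and $\sigma$-invariant subsets of $X'$. At the level of sets both $\Orb(y)$ and $X' \setminus \Orb(y)$ are visibly $\sigma$-invariant (for the latter, $z \in U$ with $\sigma(z) \in \Orb(y)$ forces $z \in \Orb(y)$ by the very definition of orbit). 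The forward direction of $\sigma$-invariance for the closures is continuity of $\sigma$, while the backward direction uses that $\sigma$ is a local homeomorphism to lift an approximating sequence near $\sigma(z)$ back to one near $z$ via a local inverse. I expect this backward invariance of the closures to be the main (routine) technical point.

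By irreducibility of $X'$, one of the two closed $\sigma$-invariant pieces must equal $X'$. The option $\overline{X' \setminus \Orb(y)} = X'$ is ruled out because $\{y\}$ is open in $X'$ and $y \in \Orb(y)$, so the open set $\{y\}$ witnesses $y \notin \overline{X' \setminus \Orb(y)}$. Hence $X' = \overline{\Orb(y)}$. Finally, $\{y\}$ open in $X'$ gives an open $V \subseteq X$ with $V \cap X' = \{y\}$, so $V \cap \Orb(y) = \{y\}$, exhibiting $y$ as isolated in $\Orb(y)$; combined with $l(y) = 0$ this gives $y \in \Per(\Sigma)$. Therefore $X' \notin \cA(\Sigma)$, completing the contrapositive.
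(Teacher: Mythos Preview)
Your proposal has a genuine gap at the very first step of the ``if'' direction. You assert that from the failure of essential freeness one can ``extract a point $y \in X'$ with $l(y) = 0$ and $\{y\}$ open in $X'$, possibly after an initial reduction inside the irreducible space $X'$.'' But Definition~\ref{Def:esfree} only hands you a nonempty open set $V' \subset X'$ on which $l \equiv 0$; it does not give an isolated point. Producing such a $y$ is not an initial reduction---it is the heart of the argument, and it genuinely requires both the Baire category theorem and irreducibility.

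Concretely, the paper first uses Baire category to pass to a nonempty open $V \subset V'$ on which the period $p(x)$ is a constant $n$. It then uses irreducibility of $X'$ to prove that $V$ must in fact equal a single periodic cycle $\{x, \sigma(x), \ldots, \sigma^{n-1}(x)\}$: if $V$ contained two points in distinct cycles, one builds two disjoint open $\sigma$-invariant saturations whose complements give a nontrivial decomposition of $X'$, contradicting irreducibility. Only after this does one conclude $\{x\}$ is open in $X'$. Without this work, there is no reason to expect an isolated point: the open set $V'$ could a priori be a continuum of periodic points, and your decomposition $X' = \overline{\Orb(y)} \cup \overline{X' \setminus \Orb(y)}$ then fails to rule out the second piece, since $y$ need not lie outside $\overline{X' \setminus \Orb(y)}$.

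Once the isolated point $y$ is in hand, your decomposition argument is a clean alternative to the paper's second use of irreducibility (which argues more directly that every $y' \in X'$ lies in $\overline{\Orb(x)}$). But the isolated point must be earned first.
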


\begin{proof}
Take an irreducible closed $\sigma$-invariant set $X'$. 
By Lemma~\ref{Lem:notef}, 
if $X'$ is essentially free
then $X' \in \cA(\Sigma)$. 
Suppose $X'$ is not essentially free. 
Then there exists a non-empty open subset $V'$ of $X'$ 
such that $l(x)=0$ for all $x \in V'$. 
By Baire's category theorem (see, for example, \cite[Proposition 2.2]{T1}), 
there exist a non-empty open subset $V \subset V'$ of $X'$ 
and $n \in \N$ such that $p(x)=n$ for all $x \in V$. 
Take $x \in V$ arbitrarily. 
We will show that $V=\{x,\sigma(x),\ldots,\sigma^{n-1}(x)\}$. 
To the contrary, suppose there exists 
$y \in V\setminus \{x,\sigma(x),\ldots,\sigma^{n-1}(x)\}$. 
Choose open subsets $W_1,W_2$ of $V$ such that $x \in W_1$, $y \in W_2$ 
and $W_2\cap \bigcup_{k=0}^{n-1}\sigma^k(W_1)=\emptyset$. 
Then $\bigcup_{k=0}^{\infty}(\sigma^{k})^{-1}(W_1)$ is an open 
$\sigma$-invariant subset of $X'$ 
because for $l=1,2,\ldots$ we have 
$\sigma^l(W_1)\subset (\sigma^{k})^{-1}(W_1)$ 
for $k \in \N$ with $k+l \in n\N$. 
Hence $X_1:=X'\setminus \bigcup_{k=0}^{\infty}(\sigma^{k})^{-1}(W_1)$ 
is a closed $\sigma$-invariant subset of $X'$ with $X_1 \neq X'$. 
Similarly, $X_2:=X'\setminus \bigcup_{k=0}^{\infty}(\sigma^{k})^{-1}(W_2)$ 
is a closed $\sigma$-invariant subset of $X'$ with $X_2 \neq X'$. 
Since $X'$ is irreducible, we have $X_1 \cup X_2 \neq X'$. 
Hence we have 
$(\sigma^{k_0})^{-1}(W_1) \cap (\sigma^{l_0})^{-1}(W_2) \neq \emptyset$
for some $k_0,l_0 \in \N$. 
Thus we get $z \in X'$ such that $\sigma^{k_0}(z)\in W_1$ 
and $\sigma^{l_0}(z) \in W_2$. 
Then for $m \in \N$ with $l_0+mn\geq k_0$, $\sigma^{l_0+mn}(z)$ is 
in $W_2\cap \bigcup_{k=0}^{n-1}\sigma^k(W_1)$ which is empty. 
This is a contradiction. 
Hence we have shown $V=\{x,\sigma(x),\ldots,\sigma^{n-1}(x)\}$. 
Since $V$ is open in $X'$, $\{x\}$ is open in $X'$. 
Hence $x$ is isolated in $\Orb(x) \subset X'$. 
This shows $x \in \Per(\Sigma)$. 
Finally we show that $X' = \overline{\Orb(x)}$. 
To do so, take $y \in X'$. 
Take a neighborhood $W$ of $y$ arbitrarily. 
Then 
\[
X_1 := X'\setminus \bigcup_{k=0}^{\infty}(\sigma^{k})^{-1}
\Big(\bigcup_{l=0}^{\infty}\sigma^{l}(W)\Big)
\]
is a closed $\sigma$-invariant subset of $X'$ with $X_1 \neq X'$. 
Since $\{x\}$ is open in $X'$, 
$X_2=X'\setminus \Orb(x)$ is also 
a closed $\sigma$-invariant subset of $X'$ with $X_2 \neq X'$. 
Since $X'$ is irreducible, we have $X_1 \cup X_2 \neq X'$. 
Hence we have 
\[
\Orb(x) \cap \bigcup_{k=0}^{\infty}(\sigma^{k})^{-1}
\Big(\bigcup_{l=0}^{\infty}\sigma^{l}(W)\Big)
\neq \emptyset.
\]
Therefore there exists $x' \in \Orb(x)$ and $k_0,l_0\in \N$ 
such that $\sigma^{k_0}(x') \in \sigma^{l_0}(W)$. 
Hence $W \cap \Orb(x) \neq \emptyset$. 
Since $W$ is arbitrary, we have $y \in \overline{\Orb(x)}$. 
Hence $X'=\overline{\Orb(x)}$. 
This shows $X' \notin \cA(\Sigma)$. 
\end{proof}

\begin{proposition}\label{Prop:notiso}
For a periodic point $x_0\in X$ 
such that $x_0$ is not isolated in $\Orb(x_0)$, 
we have $P_{(x_0,\gamma)}=P_{(x_0,1)}$ for all $\gamma\in\T$. 
\end{proposition}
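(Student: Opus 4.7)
The plan is to prove $P_{(x_0,\gamma)} = I_{\overline{\Orb(x_0)}}$ for every $\gamma \in \T$; since the right-hand side is manifestly independent of $\gamma$, this will establish the proposition. By Proposition~\ref{Prop:XP}, $X_{P_{(x_0,\gamma)}} = \overline{\Orb(x_0)}$, so Lemma~\ref{Lem:IXICI} already gives the inclusion $I_{\overline{\Orb(x_0)}} \subseteq P_{(x_0,\gamma)}$. In view of Proposition~\ref{Prop:IXI2}, it therefore suffices to show that the closed $\sigma$-invariant set $\overline{\Orb(x_0)}$ is essentially free. Since $\overline{\Orb(x_0)}$ is irreducible (by the proposition stated just after Proposition~\ref{Prop:XP}), Proposition~\ref{Prop:efA} reduces the problem to checking that $\overline{\Orb(x_0)} \in \cA(\Sigma)$, i.e., that $\overline{\Orb(x_0)} \neq \overline{\Orb(y)}$ for every $y \in \Per(\Sigma)$.

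Suppose towards a contradiction that some $y \in \Per(\Sigma)$ satisfies $\overline{\Orb(y)} = \overline{\Orb(x_0)}$. Since $y$ is isolated in $\Orb(y)$, there is an open $V \subset X$ with $V \cap \Orb(y) = \{y\}$, and using Hausdorffness of $X$ one verifies that in fact $V \cap \overline{\Orb(y)} = \{y\}$, so $\{y\}$ is open in $\overline{\Orb(x_0)}$. Since $\Orb(x_0)$ is dense in $\overline{\Orb(x_0)}$, this forces $y \in \Orb(x_0)$, and hence $\Orb(y) = \Orb(x_0)$. Because $y$ and $x_0$ are both periodic points sitting in the same grand orbit, their forward cycles are two finite $\sigma$-invariant subsets inside $\Orb(x_0)$ that must intersect (both lie in $\Orb(x_0) = \Orb(y)$) and hence coincide; consequently $y = \sigma^j(x_0)$ for some $j \geq l(x_0)$.

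If $j = 0$ then $y = x_0$ directly contradicts the non-isolation hypothesis. Otherwise, since $\sigma^j$ is a local homeomorphism at $x_0$, I choose an open neighborhood $W$ of $x_0$ on which $\sigma^j|_W$ is injective with $\sigma^j(W) \subset V$. For any $z \in W \cap \Orb(x_0)$, $\sigma$-invariance of the orbit gives $\sigma^j(z) \in V \cap \Orb(x_0) = \{y\}$, so $\sigma^j(z) = y = \sigma^j(x_0)$, and injectivity on $W$ forces $z = x_0$. Thus $W$ witnesses $x_0$ as isolated in $\Orb(x_0)$, which is the desired contradiction.

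The main obstacle I expect is the orbital bookkeeping in the second paragraph: to apply the local-homeomorphism transport of isolation I first need to know that any periodic $y$ in the grand orbit $\Orb(x_0)$ actually has the form $\sigma^j(x_0)$ for some $j \geq l(x_0)$. Once this is in hand the remaining steps are routine assemblies of the machinery developed earlier in the paper.
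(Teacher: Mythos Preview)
Your proof is correct and follows the same overall route as the paper: show $\overline{\Orb(x_0)}\in\cA(\Sigma)$, invoke Proposition~\ref{Prop:efA} to obtain essential freeness, and then apply Proposition~\ref{Prop:IXI2} together with Proposition~\ref{Prop:XP} to conclude $P_{(x_0,\gamma)}=I_{\overline{\Orb(x_0)}}$ for all $\gamma$. The paper's verification that $\overline{\Orb(x_0)}\in\cA(\Sigma)$ is a bit quicker than yours: it notes that since $\sigma$ is a local homeomorphism, non-isolation of $x_0$ in $\Orb(x_0)$ propagates to every point of $\Orb(x_0)$, so $\overline{\Orb(x_0)}$ has no isolated points whatsoever and therefore cannot equal $\overline{\Orb(y)}$ for any $y\in\Per(\Sigma)$ (such a $y$ would be isolated in $\overline{\Orb(y)}$). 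This sidesteps the orbital bookkeeping you flagged as the main obstacle, but the underlying idea---transporting isolation along the orbit via the local-homeomorphism property---is the same.
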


\begin{proof}
If $x_0$ is not isolated in $\Orb(x_0)$, 
then $x$ is not isolated in $\Orb(x_0)$ for all $x \in \Orb(x_0)$ 
because $\sigma$ is locally homeomorphic. 
Hence $X':=\overline{\Orb(x_0)}$ has no isolated point. 
Thus we have $X' \in \cA(\Sigma)$. 
By Proposition~\ref{Prop:efA}, $X'$ is essentially free. 
By Proposition~\ref{Prop:IXI2} and Proposition~\ref{Prop:XP}, 
we have 
$P_{(x_0,\gamma)}=I_{X'}=P_{(x_0,1)}$ for all $\gamma\in\T$. 
\end{proof}

Take $x_0 \in \Per(\Sigma)$. 
We set $n_0 := p(x_0)$. 
We are going to show that 
prime ideals $I$ 
with $X_I=\overline{\Orb(x_0)}$ 
is parametrized as $P_{(x_0,e^{2\pi it})}$ for $0\leq t <1/n_0$. 
Let us set $X':=\overline{\Orb(x_0)}$ and 
$X'' := \overline{\Orb(x_0)}\setminus \Orb(x_0)$ 
which is a closed $\sigma$-invariant subset of $X'$. 

\begin{lemma}[{cf.\ \cite[Lemma~11.10]{K3}}]\label{Lem:XI=O}
For an ideal $I$ of $C^*(\Sigma)$, 
we have $X_I=X'$ if and only if 
$I_{X'} \subset I$ and $I_{X''} \not\subset I$. 
\end{lemma}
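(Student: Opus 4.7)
The plan is to translate both sides of the biconditional into statements about the lattice of closed $\sigma$-invariant subsets of $X$, using the monotonicity in Lemma~\ref{Lem:Xeasy} and Lemma~\ref{Lem:Ieasy}, the containment $I_{X_I} \subset I$ from Lemma~\ref{Lem:IXICI}, and the identity $X_{I_{X'}} = X'$ from Proposition~\ref{Prop:XIX}; the only additional input will be the density of $\Orb(x_0)$ in $X'$ together with the $\sigma$-invariance of $X_I$.

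For the forward direction, assume $X_I = X'$. Then $I_{X'} = I_{X_I} \subset I$ is immediate from Lemma~\ref{Lem:IXICI}. Since $x_0 \in \Orb(x_0)$ lies in $X'$ but not in $X''$, we have $X'' \subsetneq X'$, so if $I_{X''}$ were contained in $I$, then Lemma~\ref{Lem:Xeasy} combined with Proposition~\ref{Prop:XIX} would force $X' = X_I \subset X_{I_{X''}} = X''$, which is impossible.

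For the reverse direction, assume $I_{X'} \subset I$ and $I_{X''} \not\subset I$. The first containment and Lemma~\ref{Lem:Xeasy} yield $X_I \subset X_{I_{X'}} = X'$. I argue by contradiction that $X_I = X'$: if $X_I \subsetneq X'$, I will show $X_I \subset X''$, which by Lemma~\ref{Lem:Ieasy} and Lemma~\ref{Lem:IXICI} gives $I_{X''} \subset I_{X_I} \subset I$, contradicting the second hypothesis. To establish $X_I \subset X''$, observe that $X' \setminus X_I$ is a non-empty open subset of $X' = \overline{\Orb(x_0)}$, so by density we may pick $z \in \Orb(x_0)$ with $z \notin X_I$; writing $\sigma^n(z) = \sigma^m(x_0)$ and iterating the $\sigma$-invariance relation $x \in X_I \Leftrightarrow \sigma(x) \in X_I$ forward from $z$ and then backward to $x_0$ shows $x_0 \notin X_I$, and propagating once more (forward from $x_0$ and backward to an arbitrary $w \in \Orb(x_0)$) we obtain $\Orb(x_0) \cap X_I = \emptyset$, i.e., $X_I \subset X' \setminus \Orb(x_0) = X''$.

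The only genuinely non-formal step is this orbit propagation of $\sigma$-invariance; the rest of the argument is order-theoretic bookkeeping in the ideal/set correspondence of Proposition~\ref{Prop:bij}.
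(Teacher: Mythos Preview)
Your proof is correct and follows essentially the same approach as the paper's. The only difference is organizational: the paper argues the reverse direction by directly supposing $x_0 \notin X_I$, using $\sigma$-invariance once to get $X_I \subset X''$ and reaching the contradiction, then concluding $x_0 \in X_I$ and hence $X' = \overline{\Orb(x_0)} \subset X_I$; you instead start from $X_I \subsetneq X'$, use density to find $z \in \Orb(x_0)\setminus X_I$, and propagate $\sigma$-invariance twice, which is slightly more circuitous but yields the same conclusion.
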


\begin{proof}
Take an ideal $I$ of $C^*(\Sigma)$ 
with $X_I=X'$. 
Then $I_{X'} =I_{X_I}\subset I$ 
by Lemma~\ref{Lem:IXICI}. 
If $I_{X''} \subset I$ then we have 
$X_I \subset X_{I_{X''}} =X''$ by Proposition~\ref{Prop:XIX}. 
This contradicts $X_I=X'$. 
Thus we get $I_{X''} \not\subset I$. 

Conversely, take an ideal $I$ of $C^*(\Sigma)$ 
with $I_{X'} \subset I$ and $I_{X''} \not\subset I$. 
By Lemma~\ref{Lem:Xeasy} and Proposition~\ref{Prop:XIX}, 
we have $X_I \subset X_{I_{X'}}=X'$. 
Suppose $x_0 \notin X_I$. 
Then $X_I \subset X''$ since $X_I$ is $\sigma$-invariant. 
This shows $I_{X''}\subset I_{X_I} \subset I$ 
by Lemma~\ref{Lem:Ieasy} and Lemma~\ref{Lem:IXICI}. 
This contradicts $I_{X''} \not\subset I$. 
Hence we have $x_0 \in X_I$. 
This shows $X' \subset X_I$. 
Therefore we get $X_I=X'$. 
\end{proof}

Let us set $\Sigma' = (X',\sigma|_{U\cap X'})$. 
Then we have $C^*(\Sigma')=C^*(\Sigma)/I_{X'}$ 
by the definition of $I_{X'}$. 
Let $(t^0,t^1)$ be the universal pair for $C^*(\Sigma')$, 
and define $t^n$ as in Section~\ref{Sec:C*S}. 
Let us define $f_0 \in C_c(U_{n_0})$ by 
\[
f_0(x)=\begin{cases}1 & x=x_0\\
0 & x \neq x_0 \end{cases}.
\]
We set 
$p_0=t^0(f_0)\in C^*(\Sigma')$ and 
$u_0=t^{n_0}(f_0)\in C^*(\Sigma')$. 
Note that we have $u_0^*u_0=u_0u_0^*=p_0$.

\begin{lemma}[{cf.\ \cite[Lemma~11.12]{K3}}]\label{FullHered}
The corner $p_0 C^*(\Sigma') p_0$ is 
the $C^*$-subalgebra $C^*(u_0)$ generated by $u_0$. 
\end{lemma}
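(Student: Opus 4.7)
The plan is to verify the two inclusions separately. The easy one, $C^*(u_0)\subset p_0 C^*(\Sigma')p_0$, follows at once from $u_0=u_0 u_0^*u_0=p_0u_0=u_0p_0$, so both $u_0$ and $p_0=u_0^*u_0$ already lie in the corner. For the reverse inclusion I would invoke Lemma~\ref{Lem:cspa} and reduce to showing that $p_0 t^n(\xi)t^m(\eta)^*p_0\in C^*(u_0)$ for arbitrary $n,m\in\N$, $\xi\in C_c(U'_n)$, $\eta\in C_c(U'_m)$, where $U'_n=U_n\cap X'$.

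The first simplification is a compression calculation. Iterating the identity $t^0(f)t^1(\xi)=t^1(f\xi)$ from the discussion after Definition~\ref{Def:C*S} via Lemma~\ref{Lem:fctn2} gives $t^0(f)t^n(\xi)=t^n(f\xi)$ for all $n$. Since $f_0\xi=\xi(x_0)f_0$, this yields $p_0 t^n(\xi)=\xi(x_0)t^n(f_0)$ and, taking adjoints, $t^m(\eta)^*p_0=\overline{\eta(x_0)}t^m(f_0)^*$. Hence
\[
p_0 t^n(\xi)t^m(\eta)^*p_0=\xi(x_0)\overline{\eta(x_0)}\,t^n(f_0)t^m(f_0)^*,
\]
so the problem becomes: show $t^n(f_0)t^m(f_0)^*\in C^*(u_0)$ for all $n,m\in\N$. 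Write $n=qn_0+r$ and $m=q'n_0+s$ with $0\leq r,s<n_0$. Using Lemma~\ref{Lem:tt=t} together with the periodicity $\sigma^{n_0}(x_0)=x_0$ one sees inductively that $u_0^j=t^{jn_0}(f_0)$ and $t^n(f_0)=u_0^q\,t^r(f_0)$, so it suffices to compute $t^r(f_0)t^s(f_0)^*$ for $0\leq r,s<n_0$.

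The final step is to show $t^r(f_0)t^s(f_0)^*=\delta_{r,s}p_0$. Because $\sigma$ is a local homeomorphism, isolation of $x_0$ in $\Orb(x_0)$ propagates: every $\sigma^k(x_0)$ is isolated in $\Orb(x_0)$, and hence $\{\sigma^k(x_0)\}$ is open in $X'$. Setting $g_k:=1_{\{\sigma^{k-1}(x_0)\}}\in C_c(U')$ (permissible by the above and since $l(x_0)=0$ forces the whole orbit into $U$), Lemma~\ref{Lem:fctn2} gives $f_0(x)=g_1(x)g_2(\sigma(x))\cdots g_r(\sigma^{r-1}(x))$, so $t^r(f_0)=t^1(g_1)\cdots t^1(g_r)$. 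When $r=s$, iterating (ii) of Definition~\ref{Def:C*S} in the form $t^1(g_k)t^1(g_k)^*=t^0(g_k)$ and absorbing via $t^1(g_{k-1})t^0(g_k)=t^1(g_{k-1}(g_k\circ\sigma))=t^1(g_{k-1})$ collapses the product to $t^0(g_1)=p_0$. When $r\neq s$, the middle factor $t^1(g_r)t^1(g_s)^*$ vanishes: minimality of $n_0$ gives $\sigma^r(x_0)\neq\sigma^s(x_0)$, so one can choose an open $V\subset U'$ containing both $\sigma^{r-1}(x_0)$ and $\sigma^{s-1}(x_0)$ on which $\sigma$ is injective, and then (ii) of Definition~\ref{Def:C*S} yields $t^1(g_r)t^1(g_s)^*=t^0(g_r\overline{g_s})=0$ by disjointness of supports. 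The main obstacle I anticipate is precisely this last bookkeeping step: verifying that isolation propagates along the entire orbit, and that axiom (ii) is legitimately applicable on a joint neighborhood of two distinct orbit points.
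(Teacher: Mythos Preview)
Your argument is correct and follows exactly the paper's strategy: reduce via Lemma~\ref{Lem:cspa} to spanning elements, compress to a scalar multiple of $t^n(f_0)t^m(f_0)^*$, and identify the latter with powers of $u_0$ (the paper simply asserts these last two steps, so your write-up is actually more detailed than the original). The only trivial omission is the edge case $r=0$ or $s=0$ in your final step, where there is no ``middle factor'' $t^1(g_r)t^1(g_s)^*$; this is handled directly by $t^r(f_0)\,p_0 = t^r\big(f_0\cdot(f_0\circ\sigma^r)\big) = 0$ for $0<r<n_0$, since $\sigma^r(x_0)\neq x_0$.
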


\begin{proof}
A corner $p_0 C^*(\Sigma') p_0$ is a closure of a linear span of 
the set 
\[
\big\{p_0t^n(\xi)t^m(\eta)^*p_0\mid n,m\in\N, \xi\in C_c(U_n), \eta\in C_c(U_m)\}. 
\]
by Lemma~\ref{Lem:cspa}. 
For $n,m\in\N, \xi\in C_c(U_n), \eta\in C_c(U_m)$, 
the element $p_0t^n(\xi)t^m(\eta)^*p_0$ is $\alpha t^n(f_0)t^m(f_0)^*$ 
for some $\alpha \in \C$. 
We also have
\[
t^n(f_0)t^m(f_0)^*
=\begin{cases}
p_0 & \text{if $n=m$}\\
u_0^k & \text{if $n-m = n_0k$ for some $k=1,2,\ldots$}\\
(u_0^k)^* & \text{if $m-n = n_0k$ for some $k=1,2,\ldots$}\\
0 & \text{otherwise}. 
\end{cases}
\]
Hence the corner $p_0 C^*(\Sigma') p_0$ is $C^*(u_0)$. 
\end{proof}

\begin{lemma}
The corner $p_0 C^*(\Sigma') p_0=C^*(u_0)$ is full 
in the ideal $I_{X''}/I_{X'} 
\subset C^*(\Sigma)/I_{X'} = C^*(\Sigma')$. 
\end{lemma}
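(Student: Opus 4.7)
\medskip

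The plan is to show the closed ideal $J_0$ of $C^*(\Sigma')$ generated by $p_0$ coincides with $I_{X''}/I_{X'}$; combined with Lemma~\ref{FullHered}, this is exactly the fullness statement.  I would first verify the easy inclusion $J_0 \subset I_{X''}/I_{X'}$: since $f_0 \in C_0(X')$ vanishes on $X''$, applying the surjection $\varPhi'\colon C^*(\Sigma') \to C^*(\Sigma'')$ from Lemma~\ref{Lem:StoS'} (for $\Sigma'$ and $X''$) sends $p_0 = t^0(f_0)$ to $0$, so $p_0$ lies in $\ker\varPhi' = I_{X''}/I_{X'}$, and hence so does $J_0$.

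Next I would observe that $J_0$ is gauge-invariant.  Indeed $p_0 = t^0(f_0)$ is fixed by the gauge action on $C^*(\Sigma')$, hence so is any element $a p_0 b$, and passing to the closed linear span shows $\beta_z(J_0)=J_0$ for all $z\in\T$.  Applying Proposition~\ref{Prop:IXI} (for the SGDS $\Sigma'$) we then have $J_0 = I_{X_{J_0}}$ inside $C^*(\Sigma')$, where $X_{J_0}\subset X'$ is the associated closed $\sigma$-invariant set; similarly, by Proposition~\ref{Prop:bij}, $I_{X''}/I_{X'}$ equals $I_{X''}$ inside $C^*(\Sigma')$.  The proof therefore reduces to establishing $X_{J_0}=X''$.

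The inclusion $X_{J_0} \supset X''$ is automatic from $J_0\subset I_{X''}$ via Lemma~\ref{Lem:Xeasy} and Proposition~\ref{Prop:XIX}.  For the reverse inclusion, the key step is to show that $x_0 \notin X_{J_0}$, and then propagate this along the orbit via $\sigma$-invariance.  Since $p_0 = t^0(\chi_{\{x_0\}}) \in J_0$ and $t^0$ is injective (Lemma~\ref{Lem:inj}), the defining identity $t^0(C_0(X'\setminus X_{J_0})) = t^0(C_0(X'))\cap J_0$ forces $\chi_{\{x_0\}}$ to vanish on $X_{J_0}$, so $x_0\notin X_{J_0}$.  Using that $X_{J_0}$ is $\sigma$-invariant, the equivalence $x\in X_{J_0} \iff \sigma(x)\in X_{J_0}$ lets us push forward ($\sigma^k(x_0)\notin X_{J_0}$ for all $k$) and pull back (any $\sigma$-preimage of an excluded orbit point is itself excluded), so $\Orb(x_0)\cap X_{J_0}=\emptyset$.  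Since $X_{J_0}$ is closed in $X'$, we conclude $X_{J_0}\subset X'\setminus\Orb(x_0) = X''$, giving the required equality.

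The only real content is this last $\sigma$-invariance propagation, which is the main obstacle only in the sense of requiring care that the bidirectional invariance of $X_{J_0}$ both excludes forward iterates and all preimages of $x_0$; the rest of the argument is bookkeeping with the gauge-invariant ideal correspondence already established in Section~\ref{Sec:gii}.
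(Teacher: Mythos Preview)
Your proof is correct and follows the same route as the paper's: both let $J_0$ be the ideal of $C^*(\Sigma')$ generated by $p_0$, observe it is gauge-invariant, and then use the correspondence from Section~\ref{Sec:gii} to reduce to checking $X_{J_0}=X''$, which follows from $x_0\notin X_{J_0}$ together with $\sigma$-invariance and closedness. One small slip worth fixing: it is not true that ``any element $a p_0 b$'' is fixed by the gauge action; the correct argument is that $\beta_z(a p_0 b)=\beta_z(a)\,p_0\,\beta_z(b)$ again lies in $J_0$, whence $\beta_z(J_0)\subset J_0$ for all $z$ and thus $\beta_z(J_0)=J_0$.
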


\begin{proof}
Let $I$ be the ideal of $C^*(\Sigma')$ generated by $p_0$. 
Then $I$ is gauge-invariant. 
Consider $X_I \subset X'$. 
Since $X_I$ does not contain $x_0$. 
we have $X_I \subset X''$. 
Therefore $I_{X''} \subset I_{X_I}=I$ 
by Lemma~\ref{Lem:Ieasy} and Proposition~\ref{Prop:IXI}. 
Since $p_0 \in I_{X''}$, we have $I \subset I_{X''}$. 
Hence we get $I =I_{X''}$. 

The ideal $I_{X''}$ of $C^*(\Sigma')$ is 
equal to $I_{X''}/I_{X'} \subset C^*(\Sigma)/I_{X'}$ 
via the identification $C^*(\Sigma)/I_{X'} = C^*(\Sigma')$. 
This completes the proof. 
\end{proof}

The following lemma should be known, 
but the author could not find it in the literature. 
A similar result on primitive ideals can be found, for example, 
in \cite[II.6.5.4]{B}

\begin{lemma}\label{Lem:primeO}
Let $A$ be a \CA , 
and $I',I''$ be ideals of $A$ with $I' \subset I''$. 
Then $I\mapsto (I\cap I'')/I'$ is a bijection 
from the set of prime ideals $I$ of $A$ 
with $I' \subset I$ and $I'' \not\subset I$ 
to the set of prime ideals of $I''/I'$. 
\end{lemma}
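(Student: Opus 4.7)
The plan is to verify three things in turn: well-definedness (that $(I \cap I'')/I'$ is indeed a prime ideal of $I''/I'$), injectivity, and surjectivity. The backbone throughout is the standard C*-algebraic fact that every closed two-sided ideal $P$ of a closed two-sided ideal $J$ of a \Ca $B$ is itself an ideal of $B$: for $b \in B$ and $p \in P$, an approximate unit $\{e_\lambda\}$ of $J$ gives $bp = \lim_\lambda (be_\lambda)p \in P$, since $be_\lambda \in J$ and hence $(be_\lambda)p \in JP \subset P$. Combined with the correspondence theorem for quotients, this identifies ideals of $I''/I'$ with ideals $K$ of $A$ satisfying $I' \subset K \subset I''$.

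Well-definedness and primeness of $(I \cap I'')/I'$: the inclusions $I' \subset I \cap I'' \subset I''$ make it an ideal of $I''/I'$, proper exactly because $I'' \not\subset I$. Given ideals $\widetilde K_1, \widetilde K_2$ of $I''/I'$ with $\widetilde K_1 \cap \widetilde K_2 \subset (I \cap I'')/I'$, lift to ideals $K_1, K_2$ of $A$ between $I'$ and $I''$; since $K_1 \cap K_2 \subset I$, primeness of $I$ gives $K_i \subset I$ for some $i$, hence $\widetilde K_i \subset (I \cap I'')/I'$. Injectivity is analogous: if $I_1 \cap I'' = I_2 \cap I''$ for two such primes, then $I_1 \cap I'' \subset I_2$, so primeness of $I_2$ applied to the ideals $I_1$ and $I''$, together with $I'' \not\subset I_2$, forces $I_1 \subset I_2$, and symmetrically $I_1 = I_2$.

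For surjectivity, which is the main obstacle: given a prime $\widetilde P$ of $I''/I'$, lift to $K$ with $I' \subset K \subsetneq I''$, and set $B := A/I'$, $J := I''/I'$, $P := K/I' \subsetneq J$. Define the colon ideal
\[
Q := \{b \in B : bJ \subset P \text{ and } Jb \subset P\},
\]
which is a closed self-adjoint two-sided ideal of $B$ containing $P$. An approximate-unit argument for $J$ yields $Q \cap J = P$ (for $b \in Q \cap J$, one has $b = \lim_\lambda b e_\lambda \in \overline{bJ} \subset P$), and $J \not\subset Q$, since otherwise $JJ \subset P$ would force $J \subset P$, contradicting $P \subsetneq J$. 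To see $Q$ is prime, let $K_1, K_2$ be ideals of $B$ with $K_1 \cap K_2 \subset Q$ and set $\ell_i := K_i \cap J$; these are ideals of $J$ with $\ell_1 \cap \ell_2 \subset Q \cap J = P$, so primeness of $P$ in $J$ gives $\ell_i \subset P$ for some $i$, whence $K_i J \subset K_i \cap J \subset P$ and $J K_i \subset K_i \cap J \subset P$, so $K_i \subset Q$. Finally, lift $Q$ to an ideal $I$ of $A$ containing $I'$; the isomorphism $A/I \cong B/Q$ transfers primeness, and the identifications $(I \cap I'')/I' = Q \cap J = P = \widetilde P$ and $I'' \not\subset I \iff J \not\subset Q$ close the surjectivity. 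The subtlety in the whole argument is identifying the correct candidate $Q$ and verifying $Q \cap J = P$ via approximate units, since a naive choice (such as $P$ itself regarded inside $B$) is not prime in $B$ in general.
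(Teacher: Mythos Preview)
Your proof is correct and follows essentially the same strategy as the paper: both construct the inverse via the ``colon ideal'' $\{a : aI'' \subset P\}$ (the paper writes it one-sidedly, you two-sidedly, but these agree in a \Ca), and both rely on approximate units to verify that this ideal intersects $I''$ in exactly $P$. The only organizational difference is that the paper first establishes the bijection $I \mapsto I\cap I''$ between primes of $A$ not containing $I''$ and primes of $I''$, and only afterwards passes to the quotient by $I'$, whereas you pass to $B=A/I'$ at the outset and run the colon-ideal argument there; this is a matter of bookkeeping rather than a different idea.
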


\begin{proof}
We first show that $I \mapsto I\cap I''$ is a bijection 
from the set of prime ideals $I$ of $A$ 
with $I'' \not\subset I$ 
to the set of prime ideals of $I''$. 
Take a prime ideal $I$ of $A$ 
with $I'' \not\subset I$. 
We show that $I\cap I''$ is a prime ideal of $I''$ 
Since $I'' \not\subset I$, 
we have $I\cap I'' \neq I''$. 
Take ideals $I_1,I_2$ of $I''$ with $I_1 \cap I_2 \subset I\cap I''$. 
Then $I_1,I_2$ are ideals of $A$ with $I_1 \cap I_2 \subset I$. 
Since $I$ is prime, either $I_1 \subset I$ or $I_2 \subset I$ holds. 
Thus either $I_1 \subset I\cap I''$ or $I_2 \subset I\cap I''$ holds. 
This shows $I\cap I''$ is a prime ideal of $I''$. 
Next take two prime ideals $I_1,I_2$ of $A$ 
with $I'' \not\subset I_1$ and $I'' \not\subset I_2$. 
Suppose $I_1\cap I'' = I_2\cap I''$ and we will show $I_1=I_2$. 
We have $I_1 \supset I_1\cap I'' = I_2\cap I''$. 
Since $I_1$ is prime and $I'' \not\subset I_1$, 
we get $I_2 \subset I_1$. 
Similarly we get $I_1 \subset I_2$. 
Thus $I_1=I_2$. 
This shows that the map $I \mapsto I\cap I''$ is injective. 
Now take a prime ideal $P$ of $I''$. 
Define $I \subset A$ by 
\[
I := \{a \in A\mid \text{$ab \in P$ for all $b \in I''$}\}. 
\]
Then $I$ is an ideal of $A$ satisfying $I \cap I''=P$. 
We will show $I$ is prime. 
Take two ideals $I_1,I_2$ of $A$ with $I_1\cap I_2 \subset I$. 
Then two ideals $I_1 \cap I'', I_2 \cap I''$ of $I''$ 
satisfy $(I_1 \cap I'')\cap (I_2 \cap I'') \subset I \cap I''=P$. 
Since $P$ is a prime ideal of $I''$, we have either 
$I_1 \cap I'' \subset P$ or $I_2 \cap I'' \subset P$. 
When $I_1 \cap I'' \subset P$, we have $I_1 \subset I$ 
by the definition of $I$. 
Similarly we have $I_2 \subset I$ when $I_2 \cap I'' \subset P$. 
Thus we have shown that $I$ is prime. 
This shows that the map $I \mapsto I\cap I''$ is surjective. 
Therefore the map $I \mapsto I\cap I''$ is bijective. 
Hence the map $I \mapsto I\cap I''$ is a bijection 
from the set of prime ideals $I$ of $A$ 
with $I' \subset I$ and $I'' \not\subset I$ 
to the set of prime ideals $P$ of $I''$ with $I' \subset P$. 

It is well-known and clear that $P \mapsto P/I'$ is 
a bijection from the set of prime ideals $P$ of $I''$ 
with $I' \subset P$  
to the set of prime ideals of $I''/I'$. 
This completes the proof. 
\end{proof}

\begin{proposition}
The prime ideals $I$ 
with $X_I=\overline{\Orb(x_0)}$ 
is parametrized as $P_{(x_0,e^{2\pi it})}$ for $0\leq t <1/n_0$. 
\end{proposition}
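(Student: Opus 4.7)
The plan is to combine the machinery set up just before the statement with a spectral calculation of the unitary $u_0$. By Lemma~\ref{Lem:XI=O}, a prime ideal $I$ of $C^*(\Sigma)$ satisfies $X_I=\overline{\Orb(x_0)}$ if and only if $I_{X'}\subset I$ and $I_{X''}\not\subset I$. Applying Lemma~\ref{Lem:primeO} with $I'=I_{X'}$ and $I''=I_{X''}$, the set of such prime ideals is in bijection with the set of prime ideals of the quotient $I_{X''}/I_{X'}$, where the bijection sends $I$ to $(I\cap I_{X''})/I_{X'}$.

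Next, I would use that $p_0 C^*(\Sigma')p_0=C^*(u_0)$ is a full corner of $I_{X''}/I_{X'}$ (the two lemmas preceding the statement). Under the Rieffel correspondence for a full corner $pBp\subset B$, the map $J\mapsto pJp$ gives an inclusion-preserving bijection between the closed two-sided ideals of $B$ and those of $pBp$, and a closed ideal of $B$ is prime iff its image in $pBp$ is. Composing with the previous bijection, the prime ideals $I$ of $C^*(\Sigma)$ with $X_I=\overline{\Orb(x_0)}$ correspond bijectively to the prime ideals of $C^*(u_0)$.

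Now $u_0$ is a unitary in the unital $C^*$-algebra $p_0C^*(\Sigma')p_0$, so $C^*(u_0)\cong C(\spec(u_0))\subset C(\T)$. I would compute $\spec(u_0)$ by pushing $u_0$ through the representations $\pi_{(x_0,\gamma)}$ for $\gamma\in\T$; since $X_{P_{(x_0,\gamma)}}=X'$ by Proposition~\ref{Prop:XP}, each $\pi_{(x_0,\gamma)}$ factors through $C^*(\Sigma')$. By Lemma~\ref{compute} and the identity $\sigma^{n_0}(x_0)=x_0$, one gets
\[
\pi_{(x_0,\gamma)}(u_0)\delta_{x_0}=\gamma^{n_0}\delta_{x_0},
\]
while $\pi_{(x_0,\gamma)}(p_0)$ is the rank-one projection onto $\C\delta_{x_0}$. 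Hence $\gamma^{n_0}\in\spec(u_0)$ for every $\gamma\in\T$, so $\spec(u_0)=\T$ and $C^*(u_0)\cong C(\T)$. The prime ideals of $C(\T)$ are exactly the maximal ideals, indexed by points $z\in\T$.

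To finish, I would trace which point of $\T$ corresponds to $P_{(x_0,\gamma)}$. The image of $p_0u_0p_0$ in the quotient by $(P_{(x_0,\gamma)}\cap I_{X''})/I_{X'}$ acting on the non-zero subspace $\pi_{(x_0,\gamma)}(p_0)H_{x_0}$ is multiplication by $\gamma^{n_0}$, so $P_{(x_0,\gamma)}$ corresponds to the point $\gamma^{n_0}\in\T$. As $t$ ranges over $[0,1/n_0)$, the value $\gamma^{n_0}=e^{2\pi it n_0}$ ranges bijectively over $\T$, while Lemma~\ref{per1} and Lemma~\ref{Lem:gnmn} ensure that different values of $t$ in $[0,1/n_0)$ give different $P_{(x_0,\gamma)}$'s. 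This gives the claimed parametrization. The main obstacle I anticipate is the careful verification that the Rieffel correspondence across a full corner preserves primeness in both directions and that the composition of bijections really does send $P_{(x_0,\gamma)}$ to the maximal ideal of $C(\T)$ at $\gamma^{n_0}$; the computation of $\spec(u_0)$ itself is essentially forced by the calculation above.
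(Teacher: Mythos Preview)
Your proposal is correct and follows essentially the same route as the paper: both combine Lemma~\ref{Lem:XI=O} and Lemma~\ref{Lem:primeO} to reduce to prime ideals of $I_{X''}/I_{X'}$, pass through the full hereditary corner $C^*(u_0)$, identify its prime ideals with points of $\T$, and match $P_{(x_0,\gamma)}$ to the point $\gamma^{n_0}$ via the computation $\pi_{(x_0,\gamma)}(u_0)=\gamma^{n_0}\pi_{(x_0,\gamma)}(p_0)$. The only cosmetic difference is that you explicitly argue $\spec(u_0)=\T$ by exhibiting each $\gamma^{n_0}$ as an eigenvalue, whereas the paper simply asserts the parametrization of prime ideals of $C^*(u_0)$ by $Q_z$ for $z\in\T$ (with $Q_z$ generated by $p_0-zu_0$) and then reads off $P_{(x_0,\gamma)}\cap C^*(u_0)=Q_{\gamma^{-n_0}}$.
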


\begin{proof}
Since $C^*(u_0)$ is hereditary and full 
in the ideal $I_{X''}/I_{X'}$, 
the map $I \mapsto I \cap C^*(u_0)$ induces 
the bijection from the set of all ideals of $I_{X''}/I_{X'}$ 
to the one of $C^*(u_0)$ preserving the intersections and inclusion. 
Hence it induces 
the bijection between the set of prime ideals. 
Combine this with Lemma~\ref{Lem:XI=O} and Lemma~\ref{Lem:primeO}, 
we see that $I \mapsto I \cap C^*(u_0)$ induces 
the bijection from the set of prime ideals $I$ 
with $X_I=\overline{\Orb(x_0)}$ 
to the set of all prime ideals of $C^*(u_0)$. 
The prime ideals of $C^*(u_0)$ is parametrized as $Q_z$ for $z \in \T$, 
where $Q_z$ is the ideal generated by $p_0-zu_0$. 
Since we have $\pi_{(x_0,\gamma)}(u_0)=\gamma^{n_0}\pi_{(x_0,\gamma)}(p_0)$, 
we get $P_{(x_0,\gamma)} \cap C^*(u_0)=Q_{\gamma^{-n_0}}$. 
Hence we see that $P_{(x_0,e^{2\pi it})}$ for $0\leq t <1/n_0$ is the 
parametrization of the prime ideals $I$ 
with $X_I=\overline{\Orb(x_0)}$. 
\end{proof}

Note that for $x_0\in \Per(\Sigma)$ with $n_0=p(x_0)$, 
we have $P_{(x,\gamma)}=P_{(x,\mu)}$ if and only if $\gamma^{n_0}=\mu^{n_0}$ 
by Lemma~\ref{Lem:gnmn}. 

Combine the discussion above, we get the following. 

\begin{theorem}
The set of all prime ideals of $C^*(\Sigma)$ is 
\[
\{I_{X'}\mid X'\in \cA(\Sigma)\}\cup
\{P_{(x,e^{2\pi it})}\mid x\in \Per(\Sigma), 0\leq t <1/p(x)\}. 
\]
\end{theorem}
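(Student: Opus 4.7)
The plan is to prove the set equality by showing both inclusions, and the work is essentially just assembling the results already established in this section.

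For the forward inclusion, I would take an arbitrary prime ideal $I$ of $C^*(\Sigma)$. By Proposition~\ref{Prop:prir}, $X_I$ is an irreducible closed $\sigma$-invariant set, so there is a clean dichotomy: either $X_I \in \cA(\Sigma)$, or $X_I$ is of the form $\overline{\Orb(x_0)}$ for some $x_0 \in \Per(\Sigma)$. In the first case, Proposition~\ref{Prop:efA} tells us $X_I$ is essentially free, and then Proposition~\ref{Prop:IXI2} forces $I = I_{X_I}$, placing $I$ in the first piece of the union. In the second case, the parametrization proposition immediately preceding the theorem tells us that $I = P_{(x_0,e^{2\pi it})}$ for some $0\leq t < 1/p(x_0)$, placing $I$ in the second piece.

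For the reverse inclusion, I would handle the two pieces separately. If $X' \in \cA(\Sigma)$, then $X'$ is essentially free by Proposition~\ref{Prop:efA}, so the proposition asserting that $I_{X'}$ is prime whenever $X'$ is an essentially free irreducible closed $\sigma$-invariant set shows $I_{X'}$ is prime. For the second piece, each $P_{(x,\gamma)}$ with $\gamma = e^{2\pi it}$ is the kernel of the irreducible representation $\pi_{(x,\gamma)}$, hence primitive, and every primitive ideal is prime as recalled at the start of Section~\ref{Sec:prime}.

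Honestly, there is no real obstacle here: every ingredient has been carefully prepared in the preceding lemmas and propositions, and the only step that requires any thought at all is recognizing the dichotomy \textbf{either $X_I \in \cA(\Sigma)$ or $X_I = \overline{\Orb(x_0)}$ for some $x_0 \in \Per(\Sigma)$}, which is essentially the definition of $\cA(\Sigma)$. The proof is therefore a few lines of bookkeeping combining Propositions~\ref{Prop:prir}, \ref{Prop:efA}, \ref{Prop:IXI2}, and the preceding parametrization result.
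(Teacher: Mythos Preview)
Your proposal is correct and matches the paper's approach exactly: the paper simply writes ``Combine the discussion above, we get the following'' before stating the theorem, and what you have written is precisely that combination spelled out. The dichotomy on $X_I$ via the definition of $\cA(\Sigma)$, the use of Propositions~\ref{Prop:prir}, \ref{Prop:efA}, \ref{Prop:IXI2}, the parametrization proposition for the periodic case, and the primitivity of $P_{(x,\gamma)}$ for the reverse inclusion are exactly the ingredients the paper has assembled in Section~\ref{Sec:prime}.
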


\begin{remark}\label{Rem:2ndctbl}
One can show that when $X$ is second countable 
for $X'\in \cA(\Sigma)$ there exists $x \in X$ such that $X'=\overline{\Orb(x)}$ 
(cf.\ \cite[Proposition~4.14]{K3}). 
Therefore when $X$ is second countable, 
$\{P_{(x,\gamma)}\}$ are all prime (primitive) ideals of $C^*(\Sigma)$. 
\end{remark}

\begin{proposition}
Every prime ideals of $C^*(\Sigma)$ is 
an intersection of primitive ideals in the form $P_{(x,\gamma)}$. 
\end{proposition}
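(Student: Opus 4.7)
The plan is to invoke the immediately preceding theorem, which decomposes the prime ideals of $C^*(\Sigma)$ into two families: the ideals $I_{X'}$ indexed by $X'\in\cA(\Sigma)$, and the ideals $P_{(x,e^{2\pi it})}$ for $x\in\Per(\Sigma)$ and $0\le t<1/p(x)$. The second family is evidently of the form $P_{(x,\gamma)}$ and each of its members is trivially an intersection of primitive ideals (a one-element intersection of itself). Hence the real task is to show that for every $X'\in\cA(\Sigma)$ one has
\[
I_{X'} \;=\; \bigcap_{x\in X'} P_{(x,1)}.
\]

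For the inclusion $I_{X'}\subset \bigcap_{x\in X'} P_{(x,1)}$, I would note that for $x\in X'$ the orbit $\Orb(x)$ lies in the closed $\sigma$-invariant set $X'$, so the formulas defining $t^0_{(x,1)}$ and $t^1_{(x,1)}$ see only $f|_{X'}$ and $\xi|_{U\cap X'}$; consequently the representation $\pi_{(x,1)}$ factors through the quotient map $\varPhi\colon C^*(\Sigma)\to C^*(\Sigma')$ whose kernel is $I_{X'}$ (Lemma~\ref{Lem:StoS'}). Thus $I_{X'}\subset \ker\pi_{(x,1)}=P_{(x,1)}$. (Alternatively, $\overline{\Orb(x)}\subset X'$ together with Lemma~\ref{Lem:Ieasy} and Lemma~\ref{Lem:IXICI} gives this at once.)

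For the reverse inclusion, set $J:=\bigcap_{x\in X'}P_{(x,1)}$ and $\bar J:=J/I_{X'}$, which is a well-defined ideal of $C^*(\Sigma)/I_{X'}\cong C^*(\Sigma')$. For each $x\in X'$ the representation $\pi_{(x,1)}$ descends to a representation $\pi'_{(x,1)}$ of $C^*(\Sigma')$ with $\bar J\subset\ker\pi'_{(x,1)}$. If $f\in C_0(X')$ satisfies $t'^{0}(f)\in\bar J$, then $\pi'_{(x,1)}(t'^{0}(f))\delta_x=f(x)\delta_x=0$ for every $x\in X'$, so $f\equiv 0$ on $X'$, i.e., $f=0$. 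By the injectivity of $t'^{0}$ (Lemma~\ref{Lem:inj} applied to $\Sigma'$), this says $t'^{0}(C_0(X'))\cap\bar J=0$, which by the definition of $X_{\bar J}$ forces $X_{\bar J}=X'$. Because $X'\in\cA(\Sigma)$, Proposition~\ref{Prop:efA} says that $X'$ (equivalently, the SGDS $\Sigma'$) is essentially free, so Proposition~\ref{Prop:IXI2} applied inside $C^*(\Sigma')$ yields $\bar J=I_{X_{\bar J}}=I_{X'}^{\Sigma'}=0$; that is, $J\subset I_{X'}$.

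The main load is carried by Proposition~\ref{Prop:efA}, which supplies the essential freeness of the closed invariant sets in $\cA(\Sigma)$, and by Proposition~\ref{Prop:IXI2}, which then lets us identify the ideal $\bar J$ from its gauge-invariant data $X_{\bar J}$. Once these are available, the computation reduces to checking that no nonzero element of $C_0(X')$ can be killed by every $\pi'_{(x,1)}$ with $x\in X'$, which is essentially tautological. I expect no serious obstacle beyond correctly passing between ideals of $C^*(\Sigma)$ and ideals of $C^*(\Sigma')$ via the short exact sequence defined by $\varPhi$.
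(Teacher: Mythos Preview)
Your argument is correct, but it takes a different route from the paper's. The paper shows that for $X'\in\cA(\Sigma)$ one has $I_{X'}=\bigcap_{x\in X',\,\gamma\in\T}P_{(x,\gamma)}$: intersecting over \emph{all} $\gamma\in\T$ makes the intersection manifestly gauge-invariant, and then Proposition~\ref{Prop:IXI} (the gauge-invariant uniqueness theorem) together with a quick computation of $X_I$ finishes the reverse inclusion. Your version instead intersects only over $\gamma=1$, passes to $C^*(\Sigma')$, and appeals to Proposition~\ref{Prop:IXI2} via the essential freeness of $X'$ supplied by Proposition~\ref{Prop:efA}. Both work; the paper's argument is a bit lighter in that it never needs to invoke essential freeness (indeed its computation $I_{X'}=\bigcap_{x\in X',\,\gamma\in\T}P_{(x,\gamma)}$ goes through for \emph{any} closed $\sigma$-invariant $X'$, not just those in $\cA(\Sigma)$), whereas your approach yields the marginally sharper identity $I_{X'}=\bigcap_{x\in X'}P_{(x,1)}$ at the cost of using Proposition~\ref{Prop:efA} one more time.
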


\begin{proof}
Take $X'\in \cA(\Sigma)$. 
We set $I = \bigcap_{x \in X', \gamma \in \T}P_{(x,\gamma)}$. 
Then $I$ is gauge-invariant. 
For $x \in X'$ and $\gamma \in \T$, 
we have $X_{P_{(x,\gamma)}}=\overline{\Orb(x)}\subset X'$. 
Hence $I_{X'}\subset I_{X_{P_{(x,\gamma)}}} \subset P_{(x,\gamma)}$ 
by Lemma~\ref{Lem:Ieasy} and Lemma~\ref{Lem:IXICI}. 
Therefore we get $I_{X'}\subset I$. 
For $x \in X'$ and $\gamma \in \T$, 
$I \subset P_{(x,\gamma)}$ implies 
$X_I \supset X_{P_{(x,\gamma)}}=\overline{\Orb(x)}$. 
Hence we get $X_I \supset X'$. 
By Proposition~\ref{Prop:IXI} and Lemma~\ref{Lem:Ieasy}, 
we get $I=I_{X_I} \subset I_{X'}$. 
Thus we have $I_{X'} = I$. 
\end{proof}

\begin{corollary}\label{Cor:inter}
Every primitive ideals of $C^*(\Sigma)$ is 
an intersection of primitive ideals in the form $P_{(x,\gamma)}$. 
\end{corollary}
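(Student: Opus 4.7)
The plan is very short, because the hard work has already been done in the immediately preceding proposition. The strategy is to observe that primitive ideals form a subclass of prime ideals, and then invoke the previous result about prime ideals verbatim.

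More precisely, I would first recall the fact stated at the beginning of Section~\ref{Sec:prime} (citing \cite[II.6.1.11]{B}) that every primitive ideal of a \Ca is prime. Applied to $C^*(\Sigma)$, this says that any primitive ideal $P$ of $C^*(\Sigma)$ is in particular a prime ideal.

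Next, I would invoke the previous proposition, which asserts that every prime ideal of $C^*(\Sigma)$ is an intersection of primitive ideals of the form $P_{(x,\gamma)}$ with $(x,\gamma) \in X \times \T$. Applying this to our primitive ideal $P$, we immediately obtain the required representation of $P$ as such an intersection, which concludes the proof.

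There is no real obstacle here — the corollary is a one-line deduction. The only thing to note is that the statement is not vacuous: even primitive ideals that happen themselves to be of the form $P_{(x,\gamma)}$ are trivially such an intersection (over a singleton index set), while the content of the corollary lies in the case of primitive ideals that are \emph{not} of the form $P_{(x,\gamma)}$ — whose existence in the non-second-countable setting is alluded to in Remark~\ref{Rem:2ndctbl} via \cite[Example~13.2]{K3}. For these, the preceding proposition supplies a nontrivial intersection representation.
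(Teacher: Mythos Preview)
Your proposal is correct and matches the paper's approach exactly: the corollary is stated without proof immediately after the proposition on prime ideals, and the intended deduction is precisely the one you give --- primitive implies prime (\cite[II.6.1.11]{B}), so the proposition applies.
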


\section{$Y_I$, $I_Y$ and $I_{Y_I}=I$}\label{Sec:IYI} 

\begin{definition}
For an ideal $I$ of $C^*(\Sigma)$, 
we define $Y_I\subset X\times\T$ by 
\[
Y_I=\big\{(x,\gamma)\in X\times\T \mid 
I\subset P_{(x,\gamma)}\big\}.
\]
\end{definition}

\begin{lemma}\label{cap}
For two ideals $I_1,I_2$ of $C^*(\Sigma)$, 
we have $Y_{I_1\cap I_2}=Y_{I_1}\cup Y_{I_2}$.  
If $I_1\subset I_2$, we have $Y_{I_1}\supset Y_{I_2}$. 
\end{lemma}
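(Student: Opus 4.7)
The plan is to reduce both assertions directly to the definition of $Y_I$ together with the fact, recalled at the start of Section~\ref{Sec:prime}, that every primitive ideal is prime (so in particular each $P_{(x,\gamma)}$ is prime).

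First I would dispatch the inclusion claim. If $I_1\subset I_2$ and $(x,\gamma)\in Y_{I_2}$, then by definition $I_2\subset P_{(x,\gamma)}$, so a fortiori $I_1\subset P_{(x,\gamma)}$, giving $(x,\gamma)\in Y_{I_1}$. Hence $Y_{I_1}\supset Y_{I_2}$.

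For the equality $Y_{I_1\cap I_2}=Y_{I_1}\cup Y_{I_2}$, the inclusion $\supset$ is immediate from what was just proved, applied to $I_1\cap I_2\subset I_1$ and $I_1\cap I_2\subset I_2$. The reverse inclusion $\subset$ is the only place where anything nontrivial happens: given $(x,\gamma)\in Y_{I_1\cap I_2}$ we have $I_1\cap I_2\subset P_{(x,\gamma)}$, and since $P_{(x,\gamma)}$ is primitive, it is prime, so either $I_1\subset P_{(x,\gamma)}$ or $I_2\subset P_{(x,\gamma)}$, i.e. $(x,\gamma)\in Y_{I_1}\cup Y_{I_2}$.

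There is really no obstacle here; the only thing to be careful about is to invoke primality of $P_{(x,\gamma)}$ (justified by the corollary after the irreducibility proposition in Section~\ref{Sec:irrep} together with the remark that primitive implies prime quoted from \cite{B} at the start of Section~\ref{Sec:prime}). The two statements of the lemma are perfectly parallel to Lemma~\ref{Lem:Xeasy} for $X_I$, and the argument is the same in spirit, with primeness of $P_{(x,\gamma)}$ replacing the elementary fact used there for commutative ideals.
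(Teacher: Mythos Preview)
Your proof is correct and follows exactly the same approach as the paper's own proof: both reduce the equality to the primeness of $P_{(x,\gamma)}$ (primitive $\Rightarrow$ prime), and both note the inclusion statement is immediate from the definition. You have simply spelled out the two inclusions for $Y_{I_1\cap I_2}=Y_{I_1}\cup Y_{I_2}$ a bit more explicitly than the paper, which compresses them into a single biconditional.
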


\begin{proof}
To show $Y_{I_1\cap I_2}=Y_{I_1}\cup Y_{I_2}$, 
it suffices to show for $(x,\gamma)\in X\times\T$, 
$I_1\cap I_2 \subset P_{(x,\gamma)}$ 
if and only if $I_1\subset P_{(x,\gamma)}$ or $I_2\subset P_{(x,\gamma)}$. 
This follows from the fact that $P_{(x,\gamma)}$ is prime. 
The latter assertion is clear by definition. 
\end{proof}

\begin{definition}
For a subset $Y$ of $X\times\T$, 
we define an ideal $I_Y$ of $C^*(\Sigma)$ by 
\[
I_Y=\bigcap_{(x,\gamma)\in Y}P_{(x,\gamma)}.
\]
\end{definition}

\begin{lemma}
For two subsets $Y_1,Y_2$ of $X\times\T$, 
we have $I_{Y_1\cup Y_2}=I_{Y_1}\cap I_{Y_2}$.  
If $Y_1\subset Y_2$, we have $I_{Y_1}\supset I_{Y_2}$. 
\end{lemma}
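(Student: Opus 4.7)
The plan is to observe that both assertions are purely formal consequences of the definition $I_Y = \bigcap_{(x,\gamma)\in Y} P_{(x,\gamma)}$ as an intersection of ideals over a set of indices. No structural property of $C^*(\Sigma)$ or of the ideals $P_{(x,\gamma)}$ beyond the definition is used. In this sense the lemma plays a role dual to Lemma~\ref{cap}, which relied on primeness of $P_{(x,\gamma)}$; here the direction $Y \to I_Y$ is so immediate that primeness is not needed.

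For the first equality, I would split the indexing set and write
\[
I_{Y_1\cup Y_2} = \bigcap_{(x,\gamma)\in Y_1\cup Y_2} P_{(x,\gamma)}
= \Bigl(\bigcap_{(x,\gamma)\in Y_1} P_{(x,\gamma)}\Bigr) \cap \Bigl(\bigcap_{(x,\gamma)\in Y_2} P_{(x,\gamma)}\Bigr)
= I_{Y_1}\cap I_{Y_2},
\]
using only the associativity of intersection. For the monotonicity statement, if $Y_1\subset Y_2$ then every $P_{(x,\gamma)}$ appearing in the intersection defining $I_{Y_1}$ also appears in that defining $I_{Y_2}$, so enlarging the index set can only shrink the intersection, giving $I_{Y_2}\subset I_{Y_1}$. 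Alternatively, monotonicity is a direct consequence of the first part: $Y_1\subset Y_2$ gives $Y_1\cup Y_2=Y_2$, hence $I_{Y_2}=I_{Y_1}\cap I_{Y_2}\subset I_{Y_1}$.

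There is no substantive obstacle to overcome; the main thing is simply to record that intersection of ideals distributes over union of index sets and is antitone in the index set. The proof will be one or two lines.
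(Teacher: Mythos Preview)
Your proposal is correct and matches the paper's approach exactly: the paper's proof is simply ``Clear by definition,'' and your argument spells out precisely why it is clear. There is nothing to add.
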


\begin{proof}
Clear by definition. 
\end{proof}

\begin{proposition}\label{Prop:IYI}
For an ideal $I$ of $C^*(\Sigma)$, 
we have $I_{Y_I}=I$. 
\end{proposition}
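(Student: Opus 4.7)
The inclusion $I \subset I_{Y_I}$ is immediate from the definitions, since for every $(x,\gamma) \in Y_I$ we have $I \subset P_{(x,\gamma)}$ and $I_{Y_I}$ is the intersection of these $P_{(x,\gamma)}$'s.

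For the reverse inclusion $I_{Y_I} \subset I$, the plan is to use the standard \Ca fact that every (closed, two-sided) ideal of a \Ca is the intersection of the primitive ideals containing it, together with Corollary~\ref{Cor:inter} of the previous section. Explicitly, writing
\[
I = \bigcap \{P \mid P \text{ primitive ideal of } C^*(\Sigma), \ I \subset P\},
\]
it suffices to show that $I_{Y_I} \subset P$ for each primitive ideal $P$ with $I \subset P$. By Corollary~\ref{Cor:inter}, any such $P$ can be written as $P = \bigcap_{\alpha} P_{(x_\alpha,\gamma_\alpha)}$ for some family $\{(x_\alpha,\gamma_\alpha)\} \subset X \times \T$.

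Now for each index $\alpha$ one has $P_{(x_\alpha,\gamma_\alpha)} \supset P \supset I$, so by definition $(x_\alpha,\gamma_\alpha) \in Y_I$. Hence $I_{Y_I} \subset P_{(x_\alpha,\gamma_\alpha)}$ for every $\alpha$, and therefore $I_{Y_I} \subset \bigcap_\alpha P_{(x_\alpha,\gamma_\alpha)} = P$. Taking the intersection over all primitive $P \supset I$ yields $I_{Y_I} \subset I$, which combined with the trivial direction gives $I_{Y_I} = I$.

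The only nontrivial input beyond the definitions is Corollary~\ref{Cor:inter}, which has already been established. The remaining fact — that $I$ is the intersection of the primitive ideals containing it — is a standard result for arbitrary \CA s (it follows from the existence of sufficiently many irreducible representations, e.g.\ the cited result \cite[II.6.5.15]{B}), and applies without any separability assumption. So no real obstacle is expected; the proof is just a short bookkeeping argument assembling these two facts.
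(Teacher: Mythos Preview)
Your proof is correct and follows essentially the same approach as the paper's: both use that every ideal is the intersection of the primitive ideals containing it, together with Corollary~\ref{Cor:inter}, to conclude. One small slip: the reference for ``$I$ equals the intersection of the primitive ideals containing it'' should be \cite[II.6.5.3]{B} rather than \cite[II.6.5.15]{B} (the latter concerns prime versus primitive in the separable case).
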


\begin{proof}
It is known that $I$ is the intersection of 
all primitive ideals of $C^*(\Sigma)$ containing $I$ 
(\cite[II.6.5.3]{B}). 
By Corollary~\ref{Cor:inter}, 
all primitive ideals of $C^*(\Sigma)$ 
are the intersection of ideals 
in the form $P_{(x,\gamma)}$. 
We are done. 
\end{proof}

Thus $I\mapsto Y_I$ is an injective map 
from the set of ideals of $C^*(\Sigma)$ 
to the set of subsets of $X\times\T$. 
We will determine the image of this map 
in order to get the complete description of ideals of $C^*(\Sigma)$. 

\begin{definition}
Let $Y$ be a subset of $X\times\T$. 
For each $x\in X$, 
we set 
\[
Y_x=\{\gamma\in\T \mid (x,\gamma)\in Y\}\subset\T. 
\]
\end{definition}

\begin{definition}\label{Def:adm}
We say a subset $Y$ of $X\times\T$ is said to be 
{\em admissible} if 
\begin{enumerate}
\rom
\item $Y$ is a closed subset of $X\times\T$ 
with respect to the product topology, 
\item $Y_x=Y_{\sigma(x)}$ for all $x\in U$, 
\item $Y_{x_0}\neq \emptyset,\T$ implies that 
$x_0$ is periodic, $\zeta_{p(x_0)}Y_{x_0}=Y_{x_0}$, and 
there exists a neighborhood $V$ of $x_0$ such that 
all $x\in V$ with $l(x)\neq l(x_0)$ satisfies $Y_x=\emptyset$. 
\end{enumerate}
\end{definition}

\section{$Y_I$ is admissible}\label{Sec:YIad} 

We will show that $Y_I$ is admissible 
for all ideal $I$ of $C^*(\Sigma)$ 
(Proposition~\ref{Prop:YIinv}). 

For a periodic point $x_0\in X$ with $p:=p(x_0)$ 
and a positive integer $n$, 
we define a map $\sigma_n\colon 
\Orb(x_0)\times\Z/n\Z \to \Orb(x_0)\times\Z/n\Z$ 
as follows. 

We set $y_0=\sigma^{l(x_0)+p-1}(x_0)$. 
We define $d\colon \Orb(x_0)\to \{0,1\}$ 
by $d(x)=0$ for $x\in \Orb(x_0)\setminus \{y_0\}$ 
and $d(y_0)=1$, 
and a map $\sigma_n\colon 
\Orb(x_0)\times\Z/n\Z \to \Orb(x_0)\times\Z/n\Z$ 
by $\sigma_n(x,j)=(\sigma(x),j+d(x))$. 
Let $H_{x_0}^{(n)}$ be the Hilbert space whose complete orthonormal 
system is given by $\{\delta_{(x,j)}\}_{(x,j)\in \Orb(x_0)\times\Z/n\Z}$. 
In a similar way to define representations $\pi_{(x_0,\gamma_0)}$, 
we can define a representation 
$\pi_{(x_0,\gamma_0)}^{(n)}\colon C^*(\Sigma)\to B(H_{x_0}^{(n)})$ 
such that 
\begin{align*}
\pi_{(x_0,\gamma_0)}^{(n)}(t^0(f))\delta_{(x,j)}
&=f(x)\delta_{(x,j)}\\
\pi_{(x_0,\gamma_0)}^{(n)}(t^1(\xi))\delta_{(x,j)}
&=\gamma_0\sum_{\sigma_n((y,j'))=(x,j)}\xi(y)\delta_{(y,j')}
=\gamma_0\sum_{\sigma(y)=x}\xi(y)\delta_{(y,j-d(y))}. 
\end{align*}
By construction, we have $\pi_{(x_0,\gamma_0)}^{(1)}=\pi_{(x_0,\gamma_0)}$. 
We will see that $\pi_{(x_0,\gamma_0)}^{(n)}$ 
is not irreducible for $n\geq 2$ in Lemma~\ref{kerpi1}. 
For $x\in \Orb(x_0)$, 
let $c(x)\in\N$ be the smallest natural number 
satisfying $\sigma^{c(x)}(x)=y_0$. 
We have $c(\sigma(x))=c(x)-1$ for $x\in \Orb(x_0)\setminus \{y_0\}$, 
and $c(\sigma(y_0))=p-1=c(y_0)+p-1$. 
Hence we get $d(y)p=c(\sigma(y))-c(y)+1$ for all $y\in \Orb(x_0)$. 

\begin{lemma}\label{iotak}
For $k=0,1,\ldots,n-1$, 
the map $\iota_k\colon H_{x_0}\to H_{x_0}^{(n)}$ defined by 
\[
\iota_k(\delta_x)
=\frac{1}{\sqrt{n}}\sum_{j=0}^{n-1}
\zeta_{np}^{(jp-c(x))k}\delta_{(x,j)}, 
\]
is an isometry satisfying 
$\iota_k\circ \pi_{(x_0,\zeta_{np}^{k}\gamma_0)}(a)
=\pi_{(x_0,\gamma_0)}^{(n)}(a)\circ \iota_k$ 
for $a\in C^*(\Sigma)$. 
\end{lemma}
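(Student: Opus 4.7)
The plan is to establish the two assertions separately: first that $\iota_k$ is an isometry by checking on the orthonormal basis $\{\delta_x\}_{x \in \Orb(x_0)}$, and second that the intertwining relation holds on the two families $t^0(C_0(X))$ and $t^1(C_c(U))$ which generate $C^*(\Sigma)$. Since both $\pi_{(x_0,\zeta_{np}^k\gamma_0)}$ and $\pi_{(x_0,\gamma_0)}^{(n)}$ are $*$-homomorphisms, checking the intertwining relation on these generating sets is enough to conclude it for every $a \in C^*(\Sigma)$.

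\textbf{Isometry and the $t^0$ case.} For distinct $x, x' \in \Orb(x_0)$, the vectors $\iota_k(\delta_x)$ and $\iota_k(\delta_{x'})$ are supported on the disjoint subsets $\{(x,j)\}_j$ and $\{(x',j)\}_j$ of the orthonormal basis of $H_{x_0}^{(n)}$, so $\ip{\iota_k(\delta_x)}{\iota_k(\delta_{x'})}_{x_0} = 0$; meanwhile $\|\iota_k(\delta_x)\|^2 = \frac{1}{n}\sum_{j=0}^{n-1}|\zeta_{np}^{(jp-c(x))k}|^2 = 1$. Hence $\iota_k$ sends an orthonormal basis to an orthonormal family and extends to a linear isometry. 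For $a=t^0(f)$ the intertwining is immediate: both $\pi^{(n)}_{(x_0,\gamma_0)}(t^0(f))$ and $\pi_{(x_0,\zeta_{np}^k\gamma_0)}(t^0(f))$ act as multiplication by $f(x)$ on vectors whose $X$-coordinate is $x$, and $\iota_k(\delta_x)$ is supported on such vectors.

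\textbf{The $t^1$ case.} This is the main computation. Applying the left-hand side of the intertwining relation to $\delta_x$, using $\pi_{(x_0,\zeta_{np}^k\gamma_0)}(t^1(\xi))\delta_x = \zeta_{np}^k\gamma_0\sum_{\sigma(y)=x}\xi(y)\delta_y$ and then the definition of $\iota_k$, yields
\[
\iota_k \pi_{(x_0,\zeta_{np}^k\gamma_0)}(t^1(\xi))\delta_x = \frac{\gamma_0\zeta_{np}^k}{\sqrt{n}}\sum_{\sigma(y)=x}\xi(y)\sum_{j=0}^{n-1}\zeta_{np}^{(jp-c(y))k}\delta_{(y,j)}.
\]
For the right-hand side, expand $\iota_k(\delta_x)$, apply the formula for $\pi^{(n)}_{(x_0,\gamma_0)}(t^1(\xi))$, and reindex via $j = j' - d(y)$ using $\sigma_n(y,j') = (x, j' + d(y))$, obtaining
\[
\pi^{(n)}_{(x_0,\gamma_0)}(t^1(\xi))\iota_k(\delta_x) = \frac{\gamma_0}{\sqrt{n}}\sum_{\sigma(y)=x}\xi(y)\sum_{j=0}^{n-1}\zeta_{np}^{((j+d(y))p-c(x))k}\delta_{(y,j)}.
\]
Equality of the two expressions reduces coefficient-by-coefficient to the identity
\[
\zeta_{np}^{k}\cdot\zeta_{np}^{(jp-c(y))k} = \zeta_{np}^{((j+d(y))p-c(x))k},
\]
i.e.\ $d(y)p = c(\sigma(y)) - c(y) + 1$, which is precisely the relation recorded in the paragraph preceding the lemma. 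The main obstacle is the bookkeeping in this step: one must verify that the shift $d(y)$ in the $\Z/n\Z$-coordinate of $\sigma_n$ --- which is concentrated at the single point $y_0$ on the orbit --- exactly absorbs the extra factor $\zeta_{np}^k$ produced by twisting $\gamma_0$. Once that identity on $c$ and $d$ is invoked, the two sides match and the intertwining holds on all generators, hence on $C^*(\Sigma)$.
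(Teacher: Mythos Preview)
Your proposal is correct and follows essentially the same approach as the paper's proof: both verify the isometry directly (the paper simply says ``Clearly $\iota_k$ is isometric'' while you spell out the orthogonality and norm computation), then check the intertwining relation on the generating families $t^0(C_0(X))$ and $t^1(C_c(U))$ by computing both sides on $\delta_x$ and reducing the $t^1$ case to the identity $d(y)p=c(\sigma(y))-c(y)+1$ established just before the lemma.
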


\begin{proof}
Clearly $\iota_k$ is isometric. 
For every $f\in C_0(X)$ and $x\in\Orb(x_0)$, 
we have 
\begin{align*}
\iota_k\big(\pi_{(x_0,\zeta_{np}^{k}\gamma_0)}(t^0(f))\delta_x\big)
&=\iota_k (f(x)\delta_x)
=f(x)\frac{1}{\sqrt{n}}
\sum_{j=0}^{n-1}\zeta_{np}^{(jp-c(x))k}\delta_{(x,j)}, 
\end{align*}
and 
\begin{align*}
\pi_{(x_0,\gamma_0)}^{(n)}(t^0(f))\big(\iota_k(\delta_x)\big)
&=\pi_{(x_0,\gamma_0)}^{(n)}(t^0(f))\bigg(
\frac{1}{\sqrt{n}}\sum_{j=0}^{n-1}
\zeta_{np}^{(jp-c(x))k}\delta_{(x,j)}\bigg)\\
&=\frac{1}{\sqrt{n}}\sum_{j=0}^{n-1}
\zeta_{np}^{(jp-c(x))k}f(x)\delta_{(x,j)}. 
\end{align*}
Hence 
$\iota_k\circ \pi_{(x_0,\zeta_{np}^{k}\gamma_0)}(a)
=\pi_{(x_0,\gamma_0)}^{(n)}(a)\circ \iota_k$ 
for $a\in t^0(C_0(X))$. 
For every $\xi\in C_c(U)$ and $x\in\Orb(x_0)$, 
we have 
\begin{align*}
\iota_k\big(\pi_{(x_0,\zeta_{np}^{k}\gamma_0)}(t^1(\xi))\delta_x\big)
&=\iota_k \bigg(\zeta_{np}^{k}\gamma_0\sum_{\sigma(y)=x}\xi(y)\delta_y\bigg)\\
&=\zeta_{np}^{k}\gamma_0\sum_{\sigma(y)=x}\xi(y)
\bigg(\frac{1}{\sqrt{n}}
\sum_{j=0}^{n-1}\zeta_{np}^{(jp-c(y))k}\delta_{(y,j)}\bigg)\\
&=\frac{1}{\sqrt{n}}\gamma_0\sum_{\sigma(y)=x}\xi(y)
\sum_{j=0}^{n-1}
\zeta_{np}^{(jp-c(y)+1)k}
\delta_{(y,j)},
\end{align*}
and 
\begin{align*}
\pi_{(x_0,\gamma_0)}^{(n)}(t^1(\xi))\big(\iota_k(\delta_x)\big)
&=\pi_{(x_0,\gamma_0)}^{(n)}(t^1(\xi))\bigg(\frac{1}{\sqrt{n}}
\sum_{j=0}^{n-1}\zeta_{np}^{(jp-c(x))k}\delta_{(x,j)}\bigg)\\
&=\frac{1}{\sqrt{n}}\sum_{j=0}^{n-1}\zeta_{np}^{(jp-c(x))k}
\bigg(\gamma_0\sum_{\sigma(y)=x}\xi(y)
\delta_{(y,j-d(y))}\bigg)\\ 
&=\frac{1}{\sqrt{n}}\gamma_0\sum_{\sigma(y)=x}\xi(y)
\sum_{j=0}^{n-1}\zeta_{np}^{(jp-c(x))k}
\delta_{(y,j-d(y))}\\
&=\frac{1}{\sqrt{n}}\gamma_0\sum_{\sigma(y)=x}\xi(y)
\sum_{j=0}^{n-1}
\zeta_{np}^{((j+d(y))p-c(x))k}
\delta_{(y,j)}\\
&=\frac{1}{\sqrt{n}}\gamma_0\sum_{\sigma(y)=x}\xi(y)
\sum_{j=0}^{n-1}
\zeta_{np}^{(jp-c(y)+1)k}
\delta_{(y,j)}
\end{align*}
where in the last equality 
we use the fact that $d(y)p=c(x)-c(y)+1$ 
for $y\in\Orb(x_0)$ with $\sigma(y)=x$. 
Hence 
$\iota_k\circ \pi_{(x_0,\zeta_{np}^{k}\gamma_0)}(a)
=\pi_{(x_0,\gamma_0)}^{(n)}(a)\circ \iota_k$ 
for $a\in t^1(C_c(U))$. 
Since $C^*(\Sigma)$ is generated by $t^0(C_0(X))\cup t^1(C_c(U))$, 
we have 
$\iota_k\circ \pi_{(x_0,\zeta_{np}^{k}\gamma_0)}(a)
=\pi_{(x_0,\gamma_0)}^{(n)}(a)\circ \iota_k$ 
for every $a\in C^*(\Sigma)$. 
\end{proof}

\begin{lemma}\label{kerpi1}
The representation $\pi_{(x_0,\gamma_0)}^{(n)}$ 
is unitarily equivalent to 
$\bigoplus_{k=0}^{n-1}\pi_{(x_0,\zeta_{np}^{k}\gamma_0)}$. 
Hence we have 
$\ker \pi_{(x_0,\gamma_0)}^{(n)}
=\bigcap_{k=0}^{n-1}P_{(x_0,\zeta_{np}^{k}\gamma_0)}$. 
\end{lemma}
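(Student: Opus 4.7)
The plan is to build on Lemma~\ref{iotak} by assembling the intertwining isometries $\iota_k$ into a single unitary from $\bigoplus_{k=0}^{n-1} H_{x_0}$ onto $H_{x_0}^{(n)}$. Concretely, define $\iota := \bigoplus_{k=0}^{n-1}\iota_k\colon \bigoplus_{k=0}^{n-1} H_{x_0} \to H_{x_0}^{(n)}$. Lemma~\ref{iotak} already tells us each $\iota_k$ is an isometry that intertwines $\pi_{(x_0,\zeta_{np}^{k}\gamma_0)}$ with $\pi_{(x_0,\gamma_0)}^{(n)}$, so once we show $\iota$ is unitary, the first assertion follows, and the second is then just the standard fact that the kernel of a direct sum of representations is the intersection of the kernels.

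The technical heart is therefore to verify that $\iota$ is unitary, i.e.\ that the ranges $\iota_k(H_{x_0})$ for $k=0,1,\ldots,n-1$ are pairwise orthogonal and together span $H_{x_0}^{(n)}$. For orthogonality, one computes $\ip{\iota_k\delta_x}{\iota_{k'}\delta_{x'}}$ for $x,x'\in\Orb(x_0)$ and $k\neq k'$. The inner product vanishes trivially when $x\neq x'$ (different $\delta_{(x,j)}$'s appear), and when $x=x'$ it reduces, using $\zeta_{np}^{jp}=\zeta_n^j$, to a phase times $\frac{1}{n}\sum_{j=0}^{n-1}\zeta_n^{j(k'-k)}=0$. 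For spanning, fix $x\in\Orb(x_0)$ and restrict attention to the $n$-dimensional subspace $\spa\{\delta_{(x,j)}\mid j=0,\ldots,n-1\}$. The $n$ vectors $\iota_0\delta_x,\ldots,\iota_{n-1}\delta_x$ lie in this subspace, are unit vectors, and by the computation above are pairwise orthogonal, hence form an orthonormal basis. Summing over $x$ shows $\iota$ is surjective.

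With $\iota$ established as unitary, conjugation by $\iota$ carries $\bigoplus_{k=0}^{n-1}\pi_{(x_0,\zeta_{np}^{k}\gamma_0)}$ onto $\pi_{(x_0,\gamma_0)}^{(n)}$ by the intertwining relation in Lemma~\ref{iotak}. In particular $\ker \pi_{(x_0,\gamma_0)}^{(n)}=\bigcap_{k=0}^{n-1}\ker\pi_{(x_0,\zeta_{np}^{k}\gamma_0)}=\bigcap_{k=0}^{n-1}P_{(x_0,\zeta_{np}^{k}\gamma_0)}$, which is the second statement.

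I do not anticipate a serious obstacle: the unitary $\iota$ is essentially the inverse discrete Fourier transform (twisted by the phases $\zeta_{np}^{-c(x)k}$) on each $n$-dimensional block indexed by $x\in\Orb(x_0)$, and the only thing to check carefully is the bookkeeping of the exponents $jp-c(x)$ modulo $np$. The main point to be mindful of is that the orthogonality computation uses $\zeta_{np}^{jp(k'-k)}=\zeta_n^{j(k'-k)}$, so the sum truly degenerates to the standard roots-of-unity cancellation and the $c(x)$-dependent factor drops out as a harmless global phase.
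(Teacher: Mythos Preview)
Your proposal is correct and follows essentially the same approach as the paper: the paper also assembles the isometries $\iota_k$ from Lemma~\ref{iotak} into $\bigoplus_{k=0}^{n-1}\iota_k$ and asserts that $\{\iota_k(\delta_x)\}_{k=0}^{n-1}$ is a complete orthonormal system of $\spa\{\delta_{(x,j)}\mid j\in\Z/n\Z\}$, which is precisely the orthogonality/spanning computation you spell out. Your version simply fills in the roots-of-unity cancellation $\frac{1}{n}\sum_{j=0}^{n-1}\zeta_n^{j(k'-k)}=0$ that the paper leaves implicit.
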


\begin{proof}
For $k=0,1,\ldots,n-1$, 
let $\iota_k$ be the isometies in Lemma \ref{iotak}. 
Then for each $x\in \Orb(x_0)$, 
$\{\iota_k(\delta_x)\}_{k=0}^{n-1}$ 
are complete orthonormal system of 
the linear space spanned by $\{\delta_{(x,j)}\mid j\in \Z/n\Z\}$. 
Hence $\bigoplus_{k=0}^{n-1}\iota_k$ 
is a unitary from $\bigoplus_{k=0}^{n-1}H_{x_0}$ to $H_{x_0}^{(n)}$ 
which intertwines 
$\bigoplus_{k=0}^{n-1}\pi_{(x_0,\zeta_{np}^{k}\gamma_0)}$ 
and $\pi_{(x_0,\gamma_0)}^{(n)}$. 
This completes the proof. 
\end{proof}

Note that Lemma \ref{per1} implies 
$P_{(x_0,\zeta_{np}^{k}\gamma_0)}=P_{(x_0,\zeta_{np}^{k+n}\gamma_0)}$. 
Hence we get $\ker \pi_{(x_0,\gamma_0)}^{(n)}
=\bigcap_{k\in\Z}P_{(x_0,\zeta_{np}^{k}\gamma_0)}$.

\begin{lemma}\label{weakconv1}
Let $l,m\in\N$ with $m \geq 1$. 
Let $\{(x_\lambda,\gamma_\lambda)\}_{\lambda\in\Lambda}$ 
be a net in $X\times\T$ converges to $(x_0,\gamma_0)$ 
such that $l(x_\lambda)=l$ and $p(x_\lambda)=m$ 
for all $\lambda\in \Lambda$. 
Then we have the following. 
\begin{enumerate}
\renewcommand{\labelenumi}{{\rm (\roman{enumi})}}%\rom 
\item $l(x_0)=l$ and $p(x_0)=m/n$ for some $n\in \N$. 
\item for each $(x,j)\in \Orb(x_0)\times \Z/n\Z$, 
there exist $\lambda_{(x,j)}\in \Lambda$ 
and $\varphi_\lambda(x,j)\in \Orb(x_\lambda)$ 
for $\lambda\succeq\lambda_{(x,j)}$ satisfying 
\begin{itemize}
\item $\lim_{\lambda}\varphi_\lambda(x,j)=x$ for all $(x,j)$, 
\item $\sigma(\varphi_\lambda(x,j))=\varphi_\lambda(\sigma_n(x,j))$ 
for all $(x,j)$ and all $\lambda$ with $\lambda\succeq\lambda_{(x,j)}$ 
and $\lambda\succeq\lambda_{\sigma_n(x,j)}$, 
\item for two distinct elements 
$(x,j),(x',j')\in \Orb(x_0)\times \Z/n\Z$, 
we have $\varphi_\lambda(x,j)\neq \varphi_\lambda(x',j')$ 
for sufficiently large $\lambda$. 
\end{itemize}
\item 
$\bigcap_{\lambda}P_{(x_\lambda,\gamma_\lambda)}\subset 
\ker\pi_{(x_0,\gamma_0)}^{(n)}$. 
\end{enumerate}
\end{lemma}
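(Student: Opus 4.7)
The plan is to prove the three parts in order, using the local homeomorphism structure of $\sigma$ throughout. The common theme is that iterates of $\sigma$ near $x_0$ can be inverted via local homeomorphisms, allowing us to transfer the periodic structure of the $x_\lambda$'s to $x_0$ in part (i), to build a ``covering'' $\varphi_\lambda$ of $\Orb(x_0)\times\Z/n\Z$ by $\Orb(x_\lambda)$ in part (ii), and to compare matrix coefficients of the associated representations in part (iii).

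For part (i), I fix a neighborhood $W$ of $x_0$ on which $\sigma^l$ is a homeomorphism; for large $\lambda$ we have $x_\lambda\in W$, and the equality $\sigma^{l+m}(x_\lambda)=\sigma^l(x_\lambda)$ together with continuity of $\sigma^l$ and the convergence $x_\lambda\to x_0$ forces $x_0\in U_{l+m}$ with $\sigma^{l+m}(x_0)=\sigma^l(x_0)$. Hence $p(x_0)$ divides $m$, giving $p(x_0)=m/n$ for some $n\geq 1$. To rule out $l(x_0)\leq l-1$: such an inequality would give $\sigma^{l-1}(x_0)=\sigma^{l+p(x_0)-1}(x_0)$, and local injectivity of $\sigma^{l-1}$ at $x_0$ would then force $\sigma^{l-1}(x_\lambda)=\sigma^{l+p(x_0)-1}(x_\lambda)$ for large $\lambda$, contradicting $l(x_\lambda)=l$ (since $p(x_0)\mid m$).

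For part (ii), set $p:=p(x_0)$, so $m=np$. On the cycle of $\Orb(x_0)$ I set
\[
\varphi_\lambda\bigl(\sigma^{l+i}(x_0),j\bigr)=\sigma^{l+i+jp}(x_\lambda),\quad 0\leq i<p,\; 0\leq j<n.
\]
Since $d(\sigma^{l+i}(x_0))=1$ exactly when $i=p-1$, a direct check verifies $\sigma\circ\varphi_\lambda=\varphi_\lambda\circ\sigma_n$ on the cycle, and convergence $\varphi_\lambda(\sigma^{l+i}(x_0),j)\to\sigma^{l+i}(x_0)$ follows from $\sigma^{l+jp}(x_0)=\sigma^l(x_0)$ and continuity of $\sigma^{l+i+jp}$. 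For pre-periodic points I extend $\varphi_\lambda$ by backward induction: if $y\in\Orb(x_0)$ satisfies $\sigma(y)=z$ and $\varphi_\lambda(z,j')$ is already defined with $\varphi_\lambda(z,j')\to z$, pick a neighborhood $W_y$ of $y$ on which $\sigma$ is a homeomorphism and, for $\lambda$ large, set $\varphi_\lambda(y,j):=(\sigma|_{W_y})^{-1}(\varphi_\lambda(z,j'))$, where $j$ is the unique element with $\sigma_n(y,j)=(z,j')$ (i.e.\ $j=j'$, since $d(y)=0$ for pre-periodic $y$). The intertwining with $\sigma_n$ and convergence $\varphi_\lambda(y,j)\to y$ are automatic; distinctness of the $\varphi_\lambda(x,j)$'s for distinct $(x,j)$ follows, for large $\lambda$, from injectivity of $\sigma^k$ on the chosen neighborhoods combined with distinctness of the cycle points $\sigma^{l+jp}(x_\lambda)$.

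For part (iii), let $a\in\bigcap_\lambda P_{(x_\lambda,\gamma_\lambda)}$; I must show $\pi_{(x_0,\gamma_0)}^{(n)}(a)=0$. By Lemma~\ref{Lem:cspa} it suffices to test $a=t^k(\xi)t^{k'}(\eta)^*$. Using Lemma~\ref{compute}, the matrix coefficient $\bip{\delta_{(x,j)}}{\pi_{(x_0,\gamma_0)}^{(n)}(a)\delta_{(x',j')}}$ is (up to a power of $\gamma_0$) a finite sum of products $\overline{\xi(y)}\eta(y')$ over preimages determined by the dynamics near $(x,j),(x',j')$. The analogous computation for $\pi_{(x_\lambda,\gamma_\lambda)}(a)$ at $\bigl(\delta_{\varphi_\lambda(x,j)},\delta_{\varphi_\lambda(x',j')}\bigr)$ yields the corresponding sum with $\gamma_\lambda$ in place of $\gamma_0$ and arguments near the original preimages; by continuity of $\xi,\eta$, convergence $\gamma_\lambda\to\gamma_0$, and $\varphi_\lambda(\cdot)\to(\cdot)$, the approximating coefficients converge to the target one. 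Each approximating coefficient vanishes because $a\in P_{(x_\lambda,\gamma_\lambda)}$, forcing the target to vanish. Running over all basis pairs concludes $\pi_{(x_0,\gamma_0)}^{(n)}(a)=0$. The main obstacle is part (i), specifically pinning down $l(x_0)=l$ via local injectivity, together with the careful bookkeeping in part (ii) of the twist $d$ and the $\Z/n\Z$-coordinate; once that geometric skeleton is in place, part (iii) is a direct continuity argument.
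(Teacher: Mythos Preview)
Your overall architecture matches the paper's, and parts (ii) and (iii) are essentially the paper's argument. One phrasing issue in (iii): you cannot assume that an $a\in\bigcap_\lambda P_{(x_\lambda,\gamma_\lambda)}$ has the form $t^k(\xi)t^{k'}(\eta)^*$. The correct logic (which your subsequent description in fact reflects) is to prove the matrix-coefficient convergence
\[
\bip{\delta_{\varphi_\lambda(x',j')}}{\pi_{(x_\lambda,\gamma_\lambda)}(a)\delta_{\varphi_\lambda(x,j)}}_{x_\lambda}\longrightarrow\bip{\delta_{(x',j')}}{\pi^{(n)}_{(x_0,\gamma_0)}(a)\delta_{(x,j)}}_{x_0}^{(n)}
\]
for \emph{every} $a\in C^*(\Sigma)$ --- this is where density via Lemma~\ref{Lem:cspa} legitimately reduces the question to finite sums of such products --- and only afterwards specialize to $a$ in the intersection.

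There is, however, a genuine gap in your proof of $l(x_0)=l$ in part (i). From $l(x_0)\leq l-1$ you correctly obtain $\sigma^{l-1}(x_0)=\sigma^{l-1+p(x_0)}(x_0)$, but local injectivity of $\sigma^{l-1}$ at $x_0$ cannot force $\sigma^{l-1}(x_\lambda)=\sigma^{l-1+p(x_0)}(x_\lambda)$: injectivity lets you deduce equality of \emph{sources} from equality of images, not the other way around. To conclude that these two images agree you would need a further map, injective near their common limit, identifying them; but applying $\sigma$ produces $\sigma^l(x_\lambda)$ and $\sigma^{l+p(x_0)}(x_\lambda)$, and these coincide only when $p(x_0)\in m\N$, i.e.\ only when $n=1$. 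The repair is to work with $m$ rather than $p(x_0)$, as the paper does: assuming $l(x_0)=k<l$ gives $\sigma^{k+m}(x_0)=\sigma^k(x_0)$; choose a neighborhood $V$ of this common point on which $\sigma^{l-k}$ is injective; for large $\lambda$ both $\sigma^{k+m}(x_\lambda)$ and $\sigma^k(x_\lambda)$ lie in $V$, are distinct (since $k<l(x_\lambda)$), yet have the same image $\sigma^{l+m}(x_\lambda)=\sigma^l(x_\lambda)$ under $\sigma^{l-k}$ --- a contradiction.
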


\begin{proof}
(i) 
For each $\lambda$, 
we have $\sigma^{l+m}(x_\lambda)=\sigma^{l}(x_\lambda)$. 
Hence we get $\sigma^{l+m}(x_0)=\sigma^{l}(x_0)$. 
This implies that $x_0$ is a periodic point 
whose period $p(x_0)$ divides $m$. 
Thus we can find $n\in\N$ with $p(x_0)=m/n$. 
We clearly have $l(x_0)\leq l$. 
To drive a contradiction, assume $l(x_0)=k<l$. 
Then we have $\sigma^{k+m}(x_{0})=\sigma^{k}(x_{0})$. 
Since $\sigma^{l-k}$ is locally homeomorphic, 
there exists a neighborhood $V$ 
of $\sigma^{k+m}(x_{0})=\sigma^{k}(x_{0})$ 
such that $\sigma^{l-k}$ is injective on $V$. 
We can find $\lambda$ 
such that $\sigma^{k+m}(x_{\lambda}),\sigma^{k}(x_{\lambda})\in V$. 
Since $k<l$, we have $\sigma^{k+m}(x_{\lambda})\neq \sigma^{k}(x_{\lambda})$. 
However, we have 
\[
\sigma^{l-k}\big(\sigma^{k+m}(x_{\lambda})\big)
=\sigma^{l+m}(x_{\lambda})=\sigma^{l}(x_\lambda)
=\sigma^{l-k}\big(\sigma^{k}(x_{\lambda})\big). 
\]
This is a contradiction. 
Hence $l(x_0)=l$. 

(ii) 
Let us set $p=p(x_0)$, 
and $x^{(i)}=\sigma^{l+i}(x_0)$ for $i=0,\ldots,p-1$. 
Take $(x,j)\in \Orb(x_0)\times \Z/n\Z$. 
Let $k:=l(x)$ which is the smallest natural number 
with $\sigma^{k+p}(x)=\sigma^{k}(x)$. 
Choose $i\in\{0,1,\ldots,p-1\}$ such that 
$\sigma^{k}(x)=x^{(i)}$. 
Choose a neighborhood $V$ of $x$ such that 
$\sigma^{k}$ is injective on $V$. 
Since $\sigma^{l+i+jp}(x_0)=x^{(i)}$, 
there exists $\lambda_{(x,j)}\in\Lambda$ such that 
we have $\sigma^{l+i+jp}(x_\lambda)\in \sigma^{k}(V)$ 
for all $\lambda\succeq \lambda_{(x,j)}$. 
For $\lambda\succeq \lambda_{(x,j)}$, 
define $\varphi_\lambda(x,j)$ to be 
the element in $V$ with 
$\sigma^{k}(\varphi_\lambda(x,j))=\sigma^{l+i+jp}(x_\lambda)$. 
Since the restriction of $\sigma^{k}$ to $V$ 
is a homeomorphism and 
$\lim_{\lambda}\sigma^{l+i+jp}(x_\lambda)=\sigma^{l+i+jp}(x_0)=x^{(i)}$, 
we get $\lim_{\lambda}\varphi_\lambda(x,j)=x$. 
By construction, 
it is easy to see that 
$\sigma(\varphi_\lambda(x,j))=\varphi_\lambda(\sigma_n(x,j))$ 
for $(x,j)\in (\Orb(x_0)\setminus\{x^{(p-1)}\})\times \Z/n\Z$ 
and $\lambda\in\Lambda$ with $\lambda\succeq \lambda_{(x,j)}$
and $\lambda\succeq \lambda_{\sigma_n(x,j)}$. 
For $(x^{(p-1)},j)\in \Orb(x_0)\times \Z/n\Z$, 
we have $\sigma_n(x^{(p-1)},j)=(x^{(0)},j+1)$. 
We get $\varphi_\lambda(x^{(p-1)},j)=\sigma^{l+p-1+jp}(x_\lambda)$ 
and $\varphi_\lambda(x^{(0)},j+1)=\sigma^{l+0+(j+1)p}(x_\lambda)$. 
Hence the equation 
$\sigma(\varphi_\lambda(x^{(p-1)},j))
=\varphi_\lambda(\sigma_n(x^{(p-1)},j))$ 
is easy to see when $j\neq n-1$, 
and follows from the fact that 
$\sigma^{l+np}(x_\lambda)=\sigma^{l}(x_\lambda)$ 
when $j=n-1$. 
Take two distinct elements 
$(x,j),(x',j')\in \Orb(x_0)\times \Z/n\Z$. 
If $x\neq x'$, then 
$\varphi_\lambda(x,j)\neq \varphi_\lambda(x',j')$ 
for sufficiently large $\lambda$ 
because $\lim_{\lambda}\varphi_\lambda(x,j)=x$ 
and $\lim_{\lambda}\varphi_\lambda(x',j')=x'$. 
If $x=x'$, then $j\neq j'$. 
Take $k:=l(x)$ and $i\in\{0,1,\ldots,p-1\}$ with 
$\sigma^{k}(x)=x^{(i)}$. 
For every $\lambda$ 
we have $\sigma^{l+i+jp}(x_\lambda)\neq \sigma^{l+i+j'p}(x_\lambda)$ 
because $0<|jp-j'p|<m$.
Hence $\varphi_\lambda(x,j)\neq \varphi_\lambda(x',j')$ 
when they are defined 
because $\sigma^{k}(\varphi_\lambda(x,j))=\sigma^{l+i+jp}(x_\lambda)$ 
and $\sigma^{k}(\varphi_\lambda(x',j'))=\sigma^{l+i+j'p}(x_\lambda)$. 

(iii) 
Take $(x,j),(x',j')\in \Orb(x_0)\times \Z/n\Z$. 
We will show that for all $a\in C^*(\Sigma)$ we have 
\[
\lim_{\lambda}
\ip{\delta_{\varphi_\lambda(x',j')}}{\pi_{(x_\lambda,\gamma_\lambda)}(a)
\delta_{\varphi_\lambda(x,j)}}_{x_\lambda}
=
\ip{\delta_{(x',j')}}{\pi_{(x_0,\gamma_0)}^{(n)}(a)\delta_{(x,j)}}_{x_0}^{(n)},
\]
where $\ip{\cdot}{\cdot}_{x_0}^{(n)}$ is the inner product of $H_{x_0}^{(n)}$ 
which is linear in the second variable. 
To do so, 
it suffices to assume that 
$a=\sum_{k=1}^K t^{n_k}(\xi_k)t^{m_k}(\eta_k)^*\in C^*(\Sigma)$ 
for some $K, n_k,m_k\in\N$, $\xi_k\in C_c(U_{n_k})$ 
and $\eta_k\in C_c(U_{m_k})$ for $k=1,\ldots,K$ by Lemma~\ref{Lem:cspa}. 

We can find $\lambda_0\in\Lambda$ such that 
\begin{itemize}
\item $\lambda_0\succeq\lambda_{(x,j)}$ 
and $\lambda_0\succeq\lambda_{(x',j')}$, 
\item $\lambda_0\succeq\lambda_{\sigma_n^{m_k}(x,j)}$ 
and $\lambda_0\succeq\lambda_{\sigma_n^{n_k}(x',j')}$ 
for $k=1,\ldots,K$, 
\item $\varphi_\lambda$ is injective on 
$\{\sigma_n^{m_k}(x,j),\sigma_n^{n_k}(x',j')\mid k=1,\ldots,K\}$ 
for all $\lambda\succeq \lambda_0$. 
\end{itemize}
For $\lambda\succeq \lambda_0$, 
we have 
\begin{align*}
\pi_{(x_\lambda,\gamma_\lambda)}(a)\delta_{\varphi_{\lambda(x,j)}}
&=\sum_{k=1}^K \pi_{(x_\lambda,\gamma_\lambda)}
\big(t^{n_k}(\xi_k)t^{m_k}(\eta_k)^*\big)\delta_{\varphi_\lambda(x,j)}\\
&=\sum_{k=1}^K \pi_{(x_\lambda,\gamma_\lambda)}(t^{n_k}(\xi_k))
\big(\gamma_\lambda^{-m_k}\overline{\eta_k}(\varphi_\lambda(x,j))
\delta_{\sigma^{m_k}(\varphi_\lambda(x,j))}\big)\\
&=\sum_{k=1}^K
\sum_{\sigma^{n_k}(y)=\sigma^{m_k}(\varphi_\lambda(x,j))}\!\!
\gamma_\lambda^{n_k}\xi_k(y)
\gamma_\lambda^{-m_k}\overline{\eta_k}(\varphi_\lambda(x,j))
\delta_{y}. 
\end{align*}
Hence we get 
\[
\ip{\delta_{\varphi_\lambda(x',j')}}{\pi_{(x_\lambda,\gamma_\lambda)}(a)
\delta_{\varphi_\lambda(x,j)}}_{x_\lambda}
=\sum_{k\in N_\lambda}
\gamma_\lambda^{n_k-m_k}\xi_k(\varphi_\lambda(x',j'))
\overline{\eta_k}(\varphi_\lambda(x,j)),
\]
where 
\[
N_\lambda=\big\{k\in\{1,\ldots,K\}\ \big|\ 
\sigma^{n_k}(\varphi_\lambda(x',j'))=\sigma^{m_k}(\varphi_\lambda(x,j))\big\}. 
\]
Similarly, we get 
\[
\ip{\delta_{(x',j')}}{\pi_{(x_0,\gamma_0)}^{(n)}(a)\delta_{(x,j)}}_{x_0}^{(n)}
=\sum_{k\in N_0}
\gamma_0^{n_k-m_k}\xi_k(x')\overline{\eta_k}(x),
\]
where 
\[
N_0=\big\{k\in\{1,\ldots,K\}\ \big|\ 
\sigma_n^{n_k}(x',j')=\sigma_n^{m_k}(x,j)\big\}. 
\]
Since 
\[
\sigma^{n_k}(\varphi_\lambda(x',j'))
=\varphi_\lambda(\sigma_n^{n_k}(x',j')),\quad 
\sigma^{m_k}(\varphi_\lambda(x,j))
=\varphi_\lambda(\sigma_n^{m_k}(x,j))
\]
and $\varphi_\lambda$ is injective on 
$\{\sigma_n^{m_k}(x,j),\sigma_n^{n_k}(x',j')\mid k=1,\ldots,K\}$, 
we have $N_\lambda=N_0$ for $\lambda\succeq \lambda_0$. 
Since $\lim_{\lambda}\varphi_\lambda(x,j)=x$ 
and $\lim_{\lambda}\varphi_\lambda(x',j')=x'$, 
we have 
\[
\lim_{\lambda}
\ip{\delta_{\varphi_\lambda(x',j')}}{\pi_{(x_\lambda,\gamma_\lambda)}(a)
\delta_{\varphi_\lambda(x,j)}}_{x_\lambda}
=\ip{\delta_{(x',j')}}{\pi_{(x_0,\gamma_0)}^{(n)}(a)\delta_{(x,j)}}_{x_0}^{(n)}.
\]
Thus we get this equality 
for all $(x,j),(x',j')\in\Orb(x_0)\times \Z/n\Z$ 
and all $a\in C^*(\Sigma)$. 
Hence for $a\in \bigcap_{\lambda}P_{(x_\lambda,\gamma_\lambda)}$ 
we have $\pi_{(x_0,\gamma_0)}^{(n)}(a)=0$. 
We are done. 
\end{proof}

\begin{corollary}\label{closed}
Under the condition in Lemma \ref{weakconv1}, 
we have 
\[
\bigcap_{\lambda}P_{(x_\lambda,\gamma_\lambda)}
\subset P_{(x_0,\gamma_0)}. 
\] 
\end{corollary}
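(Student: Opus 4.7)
The plan is to read off the corollary directly from the two immediately preceding results. The net hypothesis in the corollary is exactly the hypothesis of Lemma~\ref{weakconv1}, so I can simply apply part (iii) of that lemma to conclude
\[
\bigcap_{\lambda}P_{(x_\lambda,\gamma_\lambda)}\subset \ker\pi_{(x_0,\gamma_0)}^{(n)}.
\]
Now I invoke Lemma~\ref{kerpi1}, which identifies $\ker\pi_{(x_0,\gamma_0)}^{(n)}$ as $\bigcap_{k=0}^{n-1}P_{(x_0,\zeta_{np}^{k}\gamma_0)}$. Since the index $k=0$ appears in this intersection and $\zeta_{np}^{0}=1$, one of the factors is precisely $P_{(x_0,\gamma_0)}$, so the intersection is automatically contained in $P_{(x_0,\gamma_0)}$. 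Chaining these two inclusions yields the desired inclusion.

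Thus there is no real obstacle: the corollary is a one-line consequence of the two lemmas, where the genuine work (controlling the limits of matrix coefficients and identifying the representation as an isotypic sum) has already been done inside the proofs of Lemmas~\ref{weakconv1} and~\ref{kerpi1}. The proof I would write is essentially just the citation of (iii) of Lemma~\ref{weakconv1} followed by the observation that $P_{(x_0,\gamma_0)}$ is one of the ideals being intersected in the description of $\ker\pi_{(x_0,\gamma_0)}^{(n)}$ given by Lemma~\ref{kerpi1}.
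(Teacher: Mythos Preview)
Your proof is correct and is exactly the paper's own argument: the paper simply cites Lemma~\ref{weakconv1}(iii) and Lemma~\ref{kerpi1}, and your added observation that the $k=0$ term in $\bigcap_{k=0}^{n-1}P_{(x_0,\zeta_{np}^{k}\gamma_0)}$ is $P_{(x_0,\gamma_0)}$ is the intended (and only) missing step.
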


\begin{proof}
This follows from Lemma \ref{weakconv1} (iii) and Lemma \ref{kerpi1}. 
\end{proof}

For a periodic point $x_0\in X$ with $p := p(x_0)$, 
we define a representation 
$\pi_{(x_0,\gamma_0)}^{(\infty)}\colon C^*(\Sigma)\to B(H_{x_0}^{(\infty)})$ 
in a similar way to the definition of 
$\pi_{(x_0,\gamma_0)}^{(n)}$ 
where $H_{x_0}^{(\infty)}$ is the Hilbert space 
whose complete orthonormal system is given by 
$\{\delta_{(x,j)}\}_{(x,j)\in \Orb(x_0)\times\Z}$ 
using the map $\sigma_\infty\colon \Orb(x_0)\times\Z\to \Orb(x_0)\times\Z$ 
defined by $\sigma_\infty(x,j)=(\sigma(x),j+d(x))$. 
Let us define a probability space 
\[
[1,\zeta_p)=\{e^{2\pi i\theta/p}\mid 0\leq \theta <1\}
\]
on which we consider the measure $d\gamma$ defined from 
the Lebesgue measure via the bijection 
$[0,1)\ni\theta\mapsto e^{2\pi i\theta/p}\in [1,\zeta_p)$. 
The Hilbert space $L^2([1,\zeta_p),H_{x_0})$ 
of the all $H_{x_0}$-valued 
second power integrable functions on $[1,\zeta_p)$ 
has spanned by the elements in the form 
$\eta\delta_x$ with $\eta\in L^2([1,\zeta_p))$ 
and $x\in\Orb(x_0)$. 
We define a representation 
$\pi_{(x_0,[\gamma_0,\zeta_p\gamma_0))}\colon 
C^*(\Sigma)\to B(L^2([1,\zeta_p),H_{x_0}))$ by 
\[
\big(\pi_{(x_0,[\gamma_0,\zeta_p\gamma_0))}(a)
\varPsi \big)(\gamma)
=\pi_{(x_0,\gamma\gamma_0)}(a)\varPsi(\gamma)\in H_{x_0} 
\]
for $\varPsi\in L^2([1,\zeta_p),H_{x_0})$ 
and $\gamma\in [1,\zeta_p)$. 

\begin{lemma}\label{iota}
The map $\iota\colon L^2([1,\zeta_p),H_{x_0})\to H_{x_0}^{(\infty)}$ 
defined by 
\[
\iota(\eta\delta_x)
=\sum_{j\in\Z}\bigg(
\int_{[1,\zeta_p)}
\gamma^{jp-c(x)}\eta(\gamma)d\gamma \bigg)\delta_{(x,j)}, 
\]
for $\eta\in L^2([1,\zeta_p))$ and $x\in\Orb(x_0)$ 
is a unitary satisfying 
$\iota\circ \pi_{(x_0,[\gamma_0,\zeta_p\gamma_0))}(a)
=\pi_{(x_0,\gamma_0)}^{(\infty)}(a)\circ \iota$ 
for $a\in C^*(\Sigma)$. 
\end{lemma}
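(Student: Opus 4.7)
The plan is to mimic the proof of Lemma~\ref{iotak}, with the discrete Fourier transform over $\Z/n\Z$ replaced by the Fourier isomorphism between $L^2([1,\zeta_p))$ and $\ell^2(\Z)$. First, I would establish the unitarity of $\iota$. Under the parametrization $\gamma=e^{2\pi i\theta/p}$, the measure $d\gamma$ becomes the normalized Lebesgue measure $d\theta$ on $[0,1)$, and $\gamma^{jp}=e^{2\pi ij\theta}$, so $\{\gamma^{jp}\}_{j\in\Z}$ is an orthonormal basis of $L^2([1,\zeta_p))$. For fixed $x\in\Orb(x_0)$, the map $\eta\mapsto \big(\int_{[1,\zeta_p)}\gamma^{jp-c(x)}\eta(\gamma)\,d\gamma\big)_{j\in\Z}$ is the composition of multiplication by the unimodular function $\gamma^{-c(x)}$ with this Fourier transform, hence a unitary from $L^2([1,\zeta_p))$ onto $\ell^2(\Z)$, identified with the subspace $\cspa\{\delta_{(x,j)}\mid j\in\Z\}$ of $H_{x_0}^{(\infty)}$. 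Taking the orthogonal sum over $x\in\Orb(x_0)$ gives the unitarity of $\iota$.

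Second, I would verify the intertwining identity on the generating sets $t^0(C_0(X))$ and $t^1(C_c(U))$; since these generate $C^*(\Sigma)$, this suffices. For $a=t^0(f)$ with $f\in C_0(X)$, both $\iota\circ \pi_{(x_0,[\gamma_0,\zeta_p\gamma_0))}(t^0(f))$ and $\pi_{(x_0,\gamma_0)}^{(\infty)}(t^0(f))\circ \iota$ applied to $\eta\delta_x$ reduce immediately to $f(x)\iota(\eta\delta_x)$ by pulling the scalar $f(x)$ outside the integral and out of the sum, respectively.

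Third, and this is the main computation, I would check the identity for $a=t^1(\xi)$ with $\xi\in C_c(U)$. Applying $\pi_{(x_0,[\gamma_0,\zeta_p\gamma_0))}(t^1(\xi))$ to $\eta\delta_x$ produces $\gamma_0\sum_{\sigma(y)=x}\xi(y)(\gamma\eta)\delta_y$, so after $\iota$ the resulting integrand gains a factor of $\gamma$, yielding
\[
\iota\big(\pi_{(x_0,[\gamma_0,\zeta_p\gamma_0))}(t^1(\xi))\eta\delta_x\big)
=\gamma_0\sum_{\sigma(y)=x}\xi(y)\sum_{j\in\Z}\Big(\int\gamma^{jp-c(y)+1}\eta(\gamma)\,d\gamma\Big)\delta_{(y,j)}.
\]
On the other side, $\pi_{(x_0,\gamma_0)}^{(\infty)}(t^1(\xi))$ acts on $\delta_{(x,j)}$ by shifting the index $j$ to $j-d(y)$; after reindexing $j\mapsto j+d(y)$ and invoking the identity $d(y)p=c(\sigma(y))-c(y)+1=c(x)-c(y)+1$ for $y$ with $\sigma(y)=x$ (noted in the paragraph before Lemma~\ref{iotak}), the exponent $jp+d(y)p-c(x)$ becomes $jp-c(y)+1$, matching the previous display.

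The main potential obstacle is careful exponent bookkeeping, but the entire argument is a direct continuous analogue of Lemma~\ref{iotak} (with the normalized sum $\frac{1}{\sqrt{n}}\sum_{j=0}^{n-1}\zeta_{np}^{(jp-c(x))k}$ replaced by the integral $\int \gamma^{jp-c(x)}\eta(\gamma)\,d\gamma$), so I do not anticipate substantive difficulty.
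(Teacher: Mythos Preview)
Your proposal is correct and follows essentially the same approach as the paper: unitarity via the Fourier isomorphism $L^2([1,\zeta_p))\cong\ell^2(\Z)$ (after twisting by $\gamma^{-c(x)}$), then verification of the intertwining identity on the generators $t^0(C_0(X))$ and $t^1(C_c(U))$ with the same reindexing $j\mapsto j+d(y)$ and the identity $d(y)p=c(x)-c(y)+1$. The paper's write-up is organized identically, even introducing the identity function $z\in L^2([1,\zeta_p))$ to encode the extra factor of $\gamma$ you describe.
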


\begin{proof}
For each $x\in\Orb(x_0)$, 
the map 
$L^2([1,\zeta_p))\ni\eta(\cdot)
\mapsto (\cdot)^{-c(x)}\eta(\cdot)\in L^2([1,\zeta_p))$ 
is a unitary. 
Since $[1,\zeta_p)\ni \gamma \to \gamma^p\in\T$ is an isomorphism 
between probability spaces, 
the natural isomorphism $L^2(\T)\cong\ell^2(\Z)$ 
of the Fourier transform shows that 
the map 
\[
L^2([1,\zeta_p))\ni\eta\mapsto 
\sum_{j\in\Z}\bigg(
\int_{[1,\zeta_p)}
\gamma^{jp-c(x)}\eta(\gamma)d\gamma \bigg)\delta_{(x,j)}
\]
is an isomorphism onto the Hilbert subspace spanned 
by $\{\delta_{(x,j)}\}_{j\in\Z}$. 
Thus $\iota$ is a unitary. 

Take $\xi\in C_c(U)$, $\eta\in L^2([1,\zeta_p))$ and $x\in\Orb(x_0)$. 
For each $\gamma\in [1,\zeta_p)$, 
we obtain 
\begin{align*}
\Big(\pi_{(x_0,[\gamma_0,\zeta_p\gamma_0))}(t^1(\xi))
\eta\delta_x\Big)(\gamma)
&=\pi_{(x_0,\gamma\gamma_0)}(t^1(\xi))(\eta(\gamma)\delta_x)\\
&=\gamma\gamma_0\sum_{\sigma(y)=x}\xi(y)\eta(\gamma)\delta_y. 
\end{align*}
If we denote by $z\in L^2([1,\zeta_p))$ 
the identity function $\gamma\mapsto\gamma$, 
then we have 
\[
\pi_{(x_0,[\gamma_0,\zeta_p\gamma_0))}(t^1(\xi))
\eta\delta_x
=\gamma_0\sum_{\sigma(y)=x}\xi(y) \eta z \delta_y. 
\]
Hence 
\begin{align*}
\iota\big(\pi_{(x_0,[\gamma_0,\zeta_p\gamma_0))}(t^1(\xi))\eta\delta_x\big)
&=\iota \bigg(\gamma_0\sum_{\sigma(y)=x}\xi(y) \eta z \delta_y\bigg)\\
&=\gamma_0\sum_{\sigma(y)=x}\xi(y)
\sum_{j\in\Z}\bigg(
\int_{[1,\zeta_p)}
\gamma^{jp-c(y)}\eta(\gamma)\gamma d\gamma \bigg)\delta_{(y,j)}\\
&=\gamma_0\sum_{\sigma(y)=x}\sum_{j\in\Z}
\bigg(
\int_{[1,\zeta_p)}
\gamma^{jp-c(y)+1}\eta(\gamma)d\gamma \bigg)\xi(y)\delta_{(y,j)}. 
\end{align*}
On the other hand, we get 
\begin{align*}
\pi_{(x_0,\gamma_0)}^{(\infty)}(t^1(\xi))\big(\iota(\eta\delta_x)\big)
&=\pi_{(x_0,\gamma_0)}^{(\infty)}(t^1(\xi))
\bigg(\sum_{j\in\Z}
\bigg(\int_{[1,\zeta_p)}
\gamma^{jp-c(x)}\eta(\gamma)d\gamma \bigg)\delta_{(x,j)}\bigg)\\
&=\sum_{j\in\Z}
\bigg(\int_{[1,\zeta_p)}
\gamma^{jp-c(x)}\eta(\gamma)d\gamma \bigg)
\gamma_0\sum_{\sigma(y)=x}\xi(y)\delta_{(y,j-d(y))}\\
&=\gamma_0\sum_{\sigma(y)=x}\sum_{j\in\Z}
\bigg(\int_{[1,\zeta_p)}
\gamma^{jp-c(x)}\eta(\gamma)d\gamma \bigg)
\xi(y)\delta_{(y,j-d(y))}\\
&=\gamma_0\sum_{\sigma(y)=x}\sum_{j\in\Z}
\bigg(\int_{[1,\zeta_p)}
\gamma^{(j+d(y))p-c(x)}\eta(\gamma)d\gamma \bigg)
\xi(y)\delta_{(y,j)}\\
&=\gamma_0\sum_{\sigma(y)=x}\sum_{j\in\Z}
\bigg(\int_{[1,\zeta_p)}
\gamma^{jp-c(y)+1}\eta(\gamma)d\gamma \bigg)
\xi(y)\delta_{(y,j)}. 
\end{align*}
Hence we have 
$\iota\circ \pi_{(x_0,[\gamma_0,\zeta_p\gamma_0))}(a)
=\pi_{(x_0,\gamma_0)}^{(\infty)}(a)\circ \iota$ 
for all $a\in t^1(C_c(U))$. 
The proof of this equality for $a\in t^0(C_0(X))$ 
is much easier. 
Therefore we get the equality for all $a\in C^*(\Sigma)$. 
\end{proof}

\begin{corollary}\label{kerinf}
We have 
$\ker \pi_{(x_0,\gamma_0)}^{(\infty)}
=\bigcap_{\gamma\in\T}P_{(x_0,\gamma)}$. 
\end{corollary}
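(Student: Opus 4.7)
The strategy is to use Lemma~\ref{iota} to replace $\pi_{(x_0,\gamma_0)}^{(\infty)}$ by the direct-integral-type representation $\pi_{(x_0,[\gamma_0,\zeta_p\gamma_0))}$, and then exploit norm-continuity in $\gamma$ to pull the intersection over the parameter outside.

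By Lemma~\ref{iota}, $\ker \pi_{(x_0,\gamma_0)}^{(\infty)} = \ker \pi_{(x_0,[\gamma_0,\zeta_p\gamma_0))}$, where $p := p(x_0)$. The defining formula of $\pi_{(x_0,[\gamma_0,\zeta_p\gamma_0))}$ as a pointwise representation immediately gives the easy inclusion $\bigcap_{\gamma \in [1,\zeta_p)} P_{(x_0,\gamma\gamma_0)} \subset \ker \pi_{(x_0,[\gamma_0,\zeta_p\gamma_0))}$. Conversely, take $a$ in the latter kernel; testing on the constant vector-valued function $\Psi \equiv \delta_x$ for each $x \in \Orb(x_0)$ forces $\pi_{(x_0,\gamma\gamma_0)}(a)\delta_x = 0$ for Lebesgue-a.e.\ $\gamma \in [1,\zeta_p)$.

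The key input is that $\gamma \mapsto \pi_{(x_0,\gamma)}(a) \in B(H_{x_0})$ is norm continuous for every $a \in C^*(\Sigma)$. On the spanning set $\{t^n(\xi)t^m(\eta)^*\}$ of Lemma~\ref{Lem:cspa}, Lemma~\ref{compute} shows the $\gamma$-dependence is simply multiplication by $\gamma^{n-m}$, so continuity is clear on a dense $*$-subalgebra; the uniform bound $\|\pi_{(x_0,\gamma)}\|\le 1$ then extends it to all of $C^*(\Sigma)$. Consequently the set $\{\gamma \in [1,\zeta_p) : \pi_{(x_0,\gamma\gamma_0)}(a)\delta_x = 0\}$ is closed and of full Lebesgue measure, so it equals the whole interval (its complement is open with measure zero). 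Thus $a \in P_{(x_0,\gamma\gamma_0)}$ for every $\gamma \in [1,\zeta_p)$.

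It remains to identify $\bigcap_{\gamma \in [1,\zeta_p)} P_{(x_0,\gamma\gamma_0)}$ with $\bigcap_{\gamma \in \T} P_{(x_0,\gamma)}$. Every $z \in \T$ admits a unique factorization $z = \zeta_p^k \gamma \gamma_0$ with $0 \le k < p$ and $\gamma \in [1,\zeta_p)$, so $z^p = (\gamma\gamma_0)^p$; by Lemma~\ref{per1}, $P_{(x_0,z)} = P_{(x_0,\gamma\gamma_0)}$. The two intersections therefore coincide, completing the proof. The main potential obstacle is establishing the norm-continuity of $\gamma \mapsto \pi_{(x_0,\gamma)}$; after that everything is routine bookkeeping around the direct-integral structure and Lemma~\ref{per1}.
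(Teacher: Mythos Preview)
Your proof is correct and follows essentially the same route as the paper: reduce to $\pi_{(x_0,[\gamma_0,\zeta_p\gamma_0))}$ via Lemma~\ref{iota}, use pointwise norm continuity of $\gamma\mapsto\pi_{(x_0,\gamma)}(a)$ to identify the kernel with $\bigcap_{\gamma\in[\gamma_0,\zeta_p\gamma_0)}P_{(x_0,\gamma)}$, and then invoke Lemma~\ref{per1} to enlarge the index set to all of $\T$. The paper simply asserts the continuity and the resulting kernel identity in one line, whereas you spell out both the reason for continuity (the $\gamma^{n-m}$ dependence on the spanning set from Lemma~\ref{compute}) and the a.e.-to-everywhere upgrade; this extra detail is sound and fills in exactly what the paper leaves implicit.
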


\begin{proof}
By Lemma \ref{iota}, 
we have 
\[
\ker \pi_{(x_0,\gamma_0)}^{(\infty)}
=\ker \pi_{(x_0,[\gamma_0,\zeta_p\gamma_0))}. 
\]
Since $\gamma\mapsto \pi_{(x_0,\gamma)}$ is 
pointwise norm continuous, 
we have 
\[
\ker \pi_{(x_0,[\gamma_0,\zeta_p\gamma_0))}
=\bigcap_{\gamma\in [\gamma_0,\zeta_p\gamma_0)}P_{(x_0,\gamma)}. 
\]
By Lemma \ref{per1}, 
we get 
\[
\bigcap_{\gamma\in [\gamma_0,\zeta_p\gamma_0)}P_{(x_0,\gamma)} 
=\bigcap_{\gamma\in\T}P_{(x_0,\gamma)}. 
\]
This completes the proof. 
\end{proof}

As usual, 
we consider $\N\cup\{\infty\}$ 
as the one-point compactification of 
the discrete set $\N$. 

\begin{lemma}\label{weakconv2}
Let $\{(x_\lambda,\gamma_\lambda)\}_{\lambda\in\Lambda}$ 
be a net in $X\times\T$ converges to $(x_0,\gamma_0)$ 
such that $l(x_\lambda)+p(x_\lambda)$ converges to $\infty$. 
Then we have 
$\bigcap_{\lambda}P_{(x_\lambda,\gamma_\lambda)}\subset P_{(x_0,\gamma)}$ 
for all $\gamma\in\T$. 
\end{lemma}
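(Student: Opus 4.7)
The plan is to produce a single representation $\pi$ of $C^*(\Sigma)$ whose kernel is contained in $P_{(x_0,\gamma)}$ for every $\gamma \in \T$, and to show that $\bigcap_{\lambda} P_{(x_\lambda,\gamma_\lambda)} \subset \ker \pi$. When $x_0$ is aperiodic, I would take $\pi := \pi_{(x_0,\gamma_0)}$; Lemma~\ref{aper} then gives $\ker \pi = P_{(x_0,\gamma)}$ for every $\gamma$. When $x_0$ is periodic with period $p := p(x_0)$, I would take $\pi := \pi_{(x_0,\gamma_0)}^{(\infty)}$; Corollary~\ref{kerinf} then gives $\ker \pi = \bigcap_{\gamma \in \T} P_{(x_0,\gamma)}$, which lies inside every $P_{(x_0,\gamma)}$. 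Once the inclusion $\bigcap_\lambda P_{(x_\lambda,\gamma_\lambda)} \subset \ker \pi$ is established, the lemma follows in both cases.

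I would prove the inclusion by adapting the weak-convergence argument of Lemma~\ref{weakconv1}. Let $I$ denote the indexing set of the distinguished orthonormal basis of the representation space, so that $I = \Orb(x_0)$ in the aperiodic case and $I = \Orb(x_0) \times \Z$ (with shift $\sigma_\infty$) in the periodic case. For each $v \in I$ I would produce $\lambda_v \in \Lambda$ and approximating points $\varphi_\lambda(v) \in \Orb(x_\lambda)$ for $\lambda \succeq \lambda_v$ satisfying: (a) $\varphi_\lambda(v)$ converges to the $X$-coordinate of $v$; (b) $\sigma \circ \varphi_\lambda = \varphi_\lambda \circ \tau$ wherever both sides are defined, with $\tau = \sigma$ or $\tau = \sigma_\infty$ respectively; and (c) $\varphi_\lambda$ is eventually injective on every finite subset of $I$. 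Given such $\varphi_\lambda$, the matrix-coefficient computation at the end of the proof of Lemma~\ref{weakconv1}(iii), after expanding $a$ via Lemma~\ref{Lem:cspa} into finite sums of $t^{n_k}(\xi_k)t^{m_k}(\eta_k)^*$ and applying Lemma~\ref{compute}, would yield
\[
\lim_{\lambda}\ip{\delta_{\varphi_\lambda(v')}}{\pi_{(x_\lambda,\gamma_\lambda)}(a)\delta_{\varphi_\lambda(v)}}_{x_\lambda}
=\ip{\delta_{v'}}{\pi(a)\delta_v}
\]
for all $v,v' \in I$. When $a \in \bigcap_\lambda P_{(x_\lambda,\gamma_\lambda)}$ the left-hand side vanishes, so every matrix entry of $\pi(a)$ is zero, giving $a \in \ker \pi$.

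The heart of the proof, and the main obstacle, is the construction of $\varphi_\lambda$; this is where the hypothesis $l(x_\lambda)+p(x_\lambda)\to\infty$ is used. Passing to a cofinal subnet is harmless since that can only enlarge $\bigcap_\lambda P_{(x_\lambda,\gamma_\lambda)}$, so I may assume either $p(x_\lambda) \to \infty$, or $p(x_\lambda)$ is eventually a constant $q$ with $l(x_\lambda) \to \infty$. For indices corresponding to forward iterates of the base point (and non-negative $j$ in the periodic case), the scheme of Lemma~\ref{weakconv1}(ii) transfers: pick a neighborhood of the $X$-coordinate $x$ of $v$ on which $\sigma^{l(x)}$ is injective, and define $\varphi_\lambda(v)$ as the local $\sigma^{l(x)}$-preimage of the appropriate forward iterate $\sigma^{l(x_0)+i+jp}(x_\lambda)$. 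What is new is the treatment of the negative-$j$ portion in the periodic case, and of preimage vertices in the aperiodic case: here one must step backward along $\Orb(x_\lambda)$. When $p(x_\lambda)\to\infty$, arbitrarily long finite windows inside the lengthening cycle of $x_\lambda$ provide the necessary distinct representatives; when $l(x_\lambda) \to \infty$ with $p(x_\lambda)=q$ constant, the lengthening pre-period of $x_\lambda$ supplies them. In either case, any prescribed finite subset of $I$ can be lifted injectively and $\sigma$-equivariantly for all sufficiently large $\lambda$, which is exactly what the matrix-coefficient computation requires.
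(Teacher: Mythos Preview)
Your proposal is correct and follows essentially the same approach as the paper's proof: the same case split on whether $x_0$ is periodic (using $\pi_{(x_0,\gamma_0)}^{(\infty)}$ and Corollary~\ref{kerinf}) or aperiodic (using $\pi_{(x_0,\gamma_0)}$ and Lemma~\ref{aper}), the same construction of approximating points $\varphi_\lambda$ satisfying your (a)--(c), and the same matrix-coefficient limit argument from Lemma~\ref{weakconv1}(iii). The only notable difference is in the backward step for negative $j$: the paper builds $\varphi_\lambda(x'_0,-j)$ uniformly by iterating local $\sigma^p$-preimages in a fixed neighborhood of $x'_0=\sigma^{l(x_0)}(x_0)$ and then deduces eventual injectivity directly from $l(x_\lambda)+p(x_\lambda)\to\infty$, rather than passing to a subnet and using the cycle or pre-period structure of $x_\lambda$ as you suggest.
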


\begin{proof}
We divide the proof into two cases; 
the case that $x_0$ is periodic 
and the case that $x_0$ is aperiodic. 
In the both cases, 
the proofs go in a similar way to the proof of Lemma \ref{weakconv1}, 
and so we just sketch the proofs. 

First consider the case that $x_0$ is periodic 
with period $p(x_0)=p$. 
We first show that 
for each $(x,j)\in \Orb(x_0)\times \Z$, 
there exist $\lambda_{(x,j)}\in \Lambda$ 
and $\varphi_\lambda(x,j)\in \Orb(x_\lambda)$ 
for $\lambda\succeq\lambda_{(x,j)}$ satisfying 
\begin{itemize}
\item $\lim_{\lambda}\varphi_\lambda(x,j)=x$, 
\item $\sigma(\varphi_\lambda(x,j))=\varphi_\lambda(\sigma_\infty(x,j))$. 
\item for two distinct elements 
$(x,j),(x',j')\in \Orb(x_0)\times \Z$, 
we have $\varphi_\lambda(x,j)\neq \varphi_\lambda(x',j')$ 
for sufficiently large $\lambda$. 
\end{itemize}
We set $x'_0 := \sigma^{l(x_0)}(x_0)$. 
For $(x'_0,0)\in \Orb(x_0)\times \Z$, 
we set $\varphi_\lambda(x'_0,0)=\sigma^{l(x_0)}(x_\lambda)$ for all $\lambda$. 
There exists $\lambda_{(x'_0,1)} \in \Lambda$ 
such that $\varphi_\lambda(x'_0,0) \in U$ 
for all $\lambda\succeq \lambda_{(x'_0,0)}$. 
Set $\varphi_\lambda(\sigma(x'_0),0)=\sigma(\varphi_\lambda(x'_0,0))$ 
for all $\lambda\succeq \lambda_{(x'_0,0)}$. 
Similarly for $k=2,3,\ldots,p-1$ 
$\varphi_\lambda(\sigma^k(x'_0),0)=
\sigma(\varphi_\lambda(\sigma^{k-1}(x'_0),0))$ 
for sufficiently large $\lambda$. 
We also set for sufficiently large $\lambda$, 
$\varphi_\lambda(x'_0,1)=
\sigma(\varphi_\lambda(\sigma^{p-1}(x'_0),0))$ 
and $\varphi_\lambda(\sigma^k(x'_0),1)=
\sigma(\varphi_\lambda(\sigma^{k-1}(x'_0),1))$ 
for $k=1,2,\ldots,p-1$. 
For $j=2,3,\ldots$, 
we set for sufficiently large $\lambda$,  
$\varphi_\lambda(x'_0,j)=
\sigma(\varphi_\lambda(\sigma^{p-1}(x'_0),j-1))$ 
and $\varphi_\lambda(\sigma^k(x'_0),j)=
\sigma(\varphi_\lambda(\sigma^{k-1}(x'_0),j))$ 
for $k=1,2,\ldots,p-1$. 
Next we choose a neighborhood $V$ of $x'_0$ 
such that $\sigma^{p}$ is injective on $V$. 
We set $\lambda_{(x'_0,-1)} \in \Lambda$ 
such that $\varphi_\lambda(x'_0,0) \in \sigma^p(V)$ 
for all $\lambda\succeq \lambda_{(x'_0,-1)}$. 
We set $\varphi_\lambda(x'_0,-1) \in V$ 
such that $\sigma^p(\varphi_\lambda(x'_0,-1))=\varphi_\lambda(x'_0,0)$ 
and set $\varphi_\lambda(\sigma^k(x'_0),-1)=
\sigma(\varphi_\lambda(\sigma^{k-1}(x'_0),-1))$ 
for $k=1,2,\ldots,p-1$. 
We set $\lambda_{(x'_0,-2)} \in \Lambda$ 
such that $\varphi_\lambda(x'_0,-1) \in \sigma^p(V)$ 
for all $\lambda\succeq \lambda_{(x'_0,-2)}$. 
We set $\varphi_\lambda(x'_0,-2) \in V$ 
such that $\sigma^p(\varphi_\lambda(x'_0,-2))=\varphi_\lambda(x'_0,-1)$ 
and set $\varphi_\lambda(\sigma^k(x'_0),-2)=
\sigma(\varphi_\lambda(\sigma^{k-1}(x'_0),-2))$ 
for $k=1,2,\ldots,p-1$. 
For $j=3,4,\ldots$, 
we set $\lambda_{(x'_0,-j)} \in \Lambda$ 
such that $\varphi_\lambda(x'_0,-j+1) \in \sigma^p(V)$ 
for all $\lambda\succeq \lambda_{(x'_0,-j)}$. 
We set $\varphi_\lambda(x'_0,-j) \in V$ 
such that $\sigma^p(\varphi_\lambda(x'_0,-j))=\varphi_\lambda(x'_0,-j+1)$ 
and set $\varphi_\lambda(\sigma^k(x'_0),-j)=
\sigma(\varphi_\lambda(\sigma^{k-1}(x'_0),-j))$ 
for $k=1,2,\ldots,p-1$. 
We have defined $\varphi_\lambda(x,j)$ for all 
$(x,j)\in \Orb(x_0)\times \Z$ with $l(x)=0$. 
For $(x,j)\in \Orb(x_0)\times \Z$ with $l(x)\geq 1$, 
we choose a neighborhood $V$ of $x$ 
such that $\sigma^{k}$ is injective on $V$ where $k:=l(x)$. 
We set $\lambda_{(x,j)} \in \Lambda$ 
such that $\varphi_\lambda(\sigma^k(x),j) \in \sigma^k(V)$ 
for all $\lambda\succeq \lambda_{(x,j)}$. 
We set $\varphi_\lambda(x,j) \in V$ 
such that $\sigma^k(\varphi_\lambda(x,j))=\varphi_\lambda(\sigma^k(x),j)$. 
It is very similar as in the proof of Lemma \ref{weakconv1} (ii) 
to check that $\varphi_\lambda(x,j)$'s satisfy the desired conditions 
except the proof of the statement 
that for $x\in\Orb(x_0)$ and $j,j'\in\Z$ with $j < j'$, 
we have $\varphi_\lambda(x,j)\neq \varphi_\lambda(x,j')$ 
for sufficiently large $\lambda$. 
This follows from the fact that, 
we have 
\begin{align*}
\sigma^{l(x)+N}(\varphi_\lambda(x,j))&=\sigma^{N'}(x_\lambda),&
\sigma^{l(x)+N}(\varphi_\lambda(x,j'))&=\sigma^{N'+p(j'-j)}(x_\lambda) 
\end{align*}
for sufficiently large $N$ and some $N'$. 
Since $l(x_\lambda)+p(x_\lambda)$ converges to $\infty$ 
we have $\sigma^{N'}(x_\lambda) \neq \sigma^{N'+p(j'-j)}(x_\lambda)$
for sufficiently large $\lambda$. 
This shows $\varphi_\lambda(x,j)\neq \varphi_\lambda(x,j')$ 
for sufficiently large $\lambda$. 

Now, in a similar to the 
proof of Lemma \ref{weakconv1} (iii), 
we can show that for all $(x,j),(x',j')\in \Orb(x_0)\times \Z$ 
and all $a\in C^*(\Sigma)$, 
we get 
\[
\lim_{\lambda}
\ip{\delta_{\varphi_\lambda(x',j')}}{\pi_{(x_\lambda,\gamma_\lambda)}(a)
\delta_{\varphi_\lambda(x,j)}}_{x_\lambda}
=\ip{\delta_{(x',j')}}{\pi_{(x_0,\gamma_0)}^{(\infty)}(a)\delta_{(x,j)}}_{x_0}^{(\infty)}
\]
where $\ip{\cdot}{\cdot}_{x_0}^{(\infty)}$ is 
the inner product of $H_{x_0}^{(\infty)}$. 
Hence we have 
$\bigcap_{\lambda}P_{(x_\lambda,\gamma_\lambda)}
\subset \ker\pi_{(x_0,\gamma_0)}^{(\infty)}$. 
We finish the proof for the case that $x_0$ is periodic 
by Corollary \ref{kerinf}. 

For the case that $x_0$ is aperiodic, 
we can similarly construct $\varphi_\lambda(x)\in \Orb(x_\lambda)$ 
for $x\in \Orb(x_0)$ and sufficiently large $\lambda\in\Lambda$, 
so that we get 
\[
\lim_{\lambda}
\ip{\delta_{\varphi_\lambda(x')}}{\pi_{(x_\lambda,\gamma_\lambda)}(a)
\delta_{\varphi_\lambda(x)}}_{x_\lambda}
=\ip{\delta_{x'}}{\pi_{(x_0,\gamma_0)}(a)\delta_{x}}_{x_0},
\]
for all $x,x'\in \Orb(x_0)$ 
and all $a\in C^*(\Sigma)$. 
Hence we have 
$\bigcap_{\lambda}P_{(x_\lambda,\gamma_\lambda)}\subset P_{(x_0,\gamma_0)}$. 
Since $P_{(x_0,\gamma)}=P_{(x_0,\gamma_0)}$ for all $\gamma\in\T$ 
by Lemma~\ref{aper}, 
we finish the case that $x_0$ is aperiodic. 
\end{proof}

\begin{proposition}\label{Prop:YIinv}
For an ideal $I$, 
the set $Y_I$ is admissible. 
\end{proposition}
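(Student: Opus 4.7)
My plan is to check the three conditions of Definition~\ref{Def:adm} separately, with (iii) being the most substantial.

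Condition (ii) is essentially immediate: for $x\in U$, Lemma~\ref{inv0} gives $P_{(x,\gamma)}=P_{(\sigma(x),\gamma)}$ for every $\gamma\in\T$, so $(x,\gamma)\in Y_I$ if and only if $(\sigma(x),\gamma)\in Y_I$, i.e.\ $(Y_I)_x=(Y_I)_{\sigma(x)}$.

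For condition (i), I would take a net $(x_\lambda,\gamma_\lambda)$ in $Y_I$ converging to $(x_0,\gamma_0)$ and show $I\subset P_{(x_0,\gamma_0)}$. The natural dichotomy is according to whether $l(x_\lambda)+p(x_\lambda)$ stays bounded. If some subnet has $l(x_\lambda)+p(x_\lambda)$ bounded, a further subnet makes both $l(x_\lambda)$ and $p(x_\lambda)$ constant, and Corollary~\ref{closed} then yields $I\subset\bigcap_\lambda P_{(x_\lambda,\gamma_\lambda)}\subset P_{(x_0,\gamma_0)}$. Otherwise we can pass to a subnet with $l(x_\lambda)+p(x_\lambda)\to\infty$, and Lemma~\ref{weakconv2} gives the same containment (in fact with $\gamma_0$ replaceable by any $\gamma\in\T$). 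In either case $(x_0,\gamma_0)\in Y_I$.

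For condition (iii), assume $Y_{I,x_0}\neq\emptyset,\T$. That $x_0$ must be periodic is immediate from Lemma~\ref{aper}: if $x_0$ were aperiodic, all $P_{(x_0,\gamma)}$ would coincide and $Y_{I,x_0}$ would be either $\emptyset$ or $\T$. The invariance $\zeta_{p(x_0)}Y_{I,x_0}=Y_{I,x_0}$ is equally direct from Lemma~\ref{per1}, since $(\zeta_{p(x_0)}\gamma)^{p(x_0)}=\gamma^{p(x_0)}$ implies $P_{(x_0,\zeta_{p(x_0)}\gamma)}=P_{(x_0,\gamma)}$.

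The main obstacle is the neighborhood assertion in (iii), and I would attack it by contradiction. If no such $V$ exists, there is a net $x_\lambda\to x_0$ with $l(x_\lambda)\neq l(x_0)$ and $(Y_I)_{x_\lambda}\neq\emptyset$. Pick $\gamma_\lambda\in(Y_I)_{x_\lambda}$ and, by compactness of $\T$, pass to a subnet along which $\gamma_\lambda\to\gamma_0$. Apply the same boundedness dichotomy used for (i): either on a subnet $l(x_\lambda)+p(x_\lambda)$ is bounded, so a further subnet has constant $l$ and $p$ values, and Lemma~\ref{weakconv1}(i) forces $l(x_0)=l(x_\lambda)$, contradicting $l(x_\lambda)\neq l(x_0)$; or $l(x_\lambda)+p(x_\lambda)\to\infty$ on a subnet, in which case Lemma~\ref{weakconv2} gives $I\subset P_{(x_0,\gamma)}$ for every $\gamma\in\T$, contradicting $Y_{I,x_0}\neq\T$. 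Either horn produces a contradiction, so the required neighborhood exists.
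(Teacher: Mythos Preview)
Your proof is correct and follows essentially the same route as the paper: condition (ii) via Lemma~\ref{inv0}; condition (i) via the subnet dichotomy (constant finite $l,p$ handled by Corollary~\ref{closed}, $l+p\to\infty$ handled by Lemma~\ref{weakconv2}); and condition (iii) via Lemma~\ref{aper}, Lemma~\ref{per1}, and the same contradiction argument using Lemma~\ref{weakconv1}(i) and Lemma~\ref{weakconv2} for the neighborhood assertion. The only cosmetic difference is that the paper phrases the dichotomy as ``by passing to a subnet, either $l$ and $p$ are finite and constant or $l+p\to\infty$'' rather than your bounded/unbounded split, but these are equivalent.
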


\begin{proof}
Take a net $\{(x_\lambda,\gamma_\lambda)\}_{\lambda\in\Lambda}$ 
in $Y_I$ converges to $(x_0,\gamma_0)\in X\times\T$. 
By replacing the net to a subnet if necessary, 
we may assume that 
either $l(x_\lambda)$ and $p(x_\lambda)$ are finite and constant 
or $l(x_\lambda)+p(x_\lambda)$ converges $\infty$. 
When $l(x_\lambda)$ and $p(x_\lambda)$ are finite and constant, 
we have 
\[
I\subset \bigcap_{\lambda}P_{(x_\lambda,\gamma_\lambda)}
\subset P_{(x_0,\gamma_0)}
\] 
by Corollary \ref{closed}. 
When $l(x_\lambda)+p(x_\lambda)$ converges $\infty$, 
we have 
\[
I\subset \bigcap_{\lambda}P_{(x_\lambda,\gamma_\lambda)}
\subset P_{(x_0,\gamma_0)}
\] 
by Lemma \ref{weakconv2}. 
Hence we get $(x_0,\gamma_0)\in Y_I$. 
This shows that $Y_I$ is closed. 
By Lemma \ref{inv0}, 
$Y_I$ satisfies the condition (ii) 
in Definition \ref{Def:adm}. 

Take $x_0\in X$ with $(Y_I)_{x_0}\neq \emptyset,\T$. 
By Lemma \ref{aper}, 
we see that $x_0$ is periodic, 
and by Lemma \ref{per1} 
we see that $\zeta_{p(x_0)}(Y_I)_{x_0}=(Y_I)_{x_0}$. 
To the contrary, 
suppose that for any neighborhood $V$ of $x_0$ 
we can find $x\in V$ with $l(x)\neq l(x_0)$
satisfying $(Y_I)_x\neq\emptyset$. 
Then we can find a net $\{(x_\lambda,\gamma_\lambda)\}_{\lambda\in\Lambda}$ 
in $Y_I$ with $l(x_\lambda) \neq l(x_0)$ 
which converges $(x_0,\gamma_0)$ for some $\gamma_0\in\T$. 
By taking a subnet if necessary, 
we may assume that either 
$l(x_\lambda)$ and $p(x_\lambda)$ are finite and constant 
or $l(x_\lambda)+p(x_\lambda)$ converges to $\infty$ 
. 
If $l(x_\lambda)$ and $p(x_\lambda)$ are finite and constant, 
then $l(x_0)=l(x_\lambda)$ by Lemma~\ref{weakconv1} (i). 
This is a contradiction. 
If $l(x_\lambda)+p(x_\lambda)$ converges to $\infty$ 
then we have $(Y_I)_{x_0}=\T$ by Lemma~\ref{weakconv2}. 
This is also a contradiction. 
Therefore we can find a neighborhood $V$ of $x_0$ such that 
all $x\in V$ with $l(x)\neq l(x_0)$ satisfies $(Y_I)_x=\emptyset$. 
We have proved that $Y_I$ is admissible. 
\end{proof}

\section{A proof of $Y_{I_Y}=Y$}\label{Sec:YIY}

Take an admissible subset $Y$ of $X\times\T$. 
In this section we will prove that $Y_{I_Y}=Y$. 
By definition, we have $Y_{I_Y}\supset Y$. 
Thus all we have to do is to prove 
the other inclusion $Y_{I_Y}\subset Y$. 
We set $X' :=\{x\in X\mid Y_x\neq\emptyset\}$. 

\begin{lemma}
The set $X'$ is closed and $\sigma$-invariant. 
\end{lemma}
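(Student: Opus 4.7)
The plan is to prove the two properties separately, each using one of the defining conditions of admissibility.

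For $\sigma$-invariance, I would simply invoke condition (ii) in Definition~\ref{Def:adm}: for every $x \in U$ we have $Y_x = Y_{\sigma(x)}$. Hence $Y_x \neq \emptyset$ if and only if $Y_{\sigma(x)} \neq \emptyset$, which is exactly the statement that $x \in X'$ iff $\sigma(x) \in X'$. So $X'$ is $\sigma$-invariant.

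For closedness, I would take a net $\{x_\lambda\}_{\lambda \in \Lambda}$ in $X'$ converging to some $x_0 \in X$, and show $x_0 \in X'$. For each $\lambda$, by definition of $X'$ we may pick some $\gamma_\lambda \in Y_{x_\lambda}$, so that $(x_\lambda, \gamma_\lambda) \in Y$. Since $\T$ is compact, after passing to a subnet we may assume $\gamma_\lambda \to \gamma_0$ for some $\gamma_0 \in \T$. Then $(x_\lambda, \gamma_\lambda) \to (x_0, \gamma_0)$ in $X \times \T$. By condition (i) in Definition~\ref{Def:adm}, $Y$ is closed, so $(x_0, \gamma_0) \in Y$, hence $\gamma_0 \in Y_{x_0}$ and $Y_{x_0} \neq \emptyset$, meaning $x_0 \in X'$.

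There is no real obstacle here: the closedness is essentially compactness of $\T$ combined with the closedness of $Y$, and the $\sigma$-invariance is immediate from condition (ii). Condition (iii) of admissibility is not needed for this lemma.
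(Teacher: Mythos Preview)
Your proof is correct and follows the same approach as the paper. The paper's proof is terser --- it simply says ``$X'$ is closed because $Y$ is closed and $\T$ is compact'' (i.e., the projection $X\times\T\to X$ is a closed map) and invokes condition (ii) for $\sigma$-invariance --- but your net argument spells out exactly this reasoning.
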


\begin{proof}
The set $X'$ is closed because $Y$ is closed 
and $\T$ is compact. 
By the condition (ii) in Definition~\ref{Def:adm}, 
$X'$ is $\sigma$-invariant. 
\end{proof}

Take $x_0\notin X'$ and $\gamma_0\in\T$. 
We can find $f\in C_0(X)$ 
such that $f(x_0)=1$ and $f(x)=0$ for all $x\in X'$. 
Then we have $t^0(f)\in I_Y$ 
because $\pi_{(x,\gamma)}(t^0(f))=0$ 
for all $x\in X'$ and all $\gamma\in\T$. 
However $t^0(f)\notin P_{(x_0,\gamma_0)}$ 
because $\pi_{(x_0,\gamma_0)}(t^0(f))\delta_{x_0}=\delta_{x_0}$. 
This implies $(x_0,\gamma_0)\notin Y_{I_Y}$. 
Therefore we have $Y_{I_Y}\subset X'\times \T$. 

Take $(x_0,\gamma_0)\in (X'\times \T)\setminus Y$, 
and we will show that $(x_0,\gamma_0)\notin Y_{I_Y}$ 
which is equivalent to $I_Y \not\subset P_{(x_0,\gamma_0)}$. 
To do so, it suffices to construct $a \in C^*(\Sigma)$ 
such that $\pi_{(x_0,\gamma_0)}(a)\neq 0$ and 
$\pi_{(x,\gamma)}(a) = 0$ for all $(x,\gamma) \in Y$. 
Note that the representation $\pi_{(x_0,\gamma_0)}$ factors through 
the natural surjection $C^*(\Sigma)\to C^*(\Sigma')$ 
where $\Sigma'=(X',\sigma|_{U\cap X'})$. 
The induced representation of $C^*(\Sigma')$ 
can be denoted by the same notation $\pi_{(x_0,\gamma_0)}$ 
considered as $x_0 \in X'$. 
Similarly for each $(x,\gamma) \in Y$, 
the representation $\pi_{(x,\gamma)}\colon 
C^*(\Sigma) \to B(H_{x_0})$ factors through 
the representation $\pi_{(x,\gamma)}\colon 
C^*(\Sigma') \to B(H_{x_0})$ 
considered as $x \in X'$. 
Hence to finish the proof it suffices to 
construct $a \in C^*(\Sigma')$ 
such that $\pi_{(x_0,\gamma_0)}(a)\neq 0$ and 
$\pi_{(x,\gamma)}(a) = 0$ for all $(x,\gamma) \in Y$. 

We have $Y_{x_0}\neq\emptyset,\T$. 
Hence by the condition (iii) 
in Definition~\ref{Def:adm}, 
there exists an open neighborhood $V$ of $x_0$ such that 
$l(x)=l(x_0)$ for all $x\in X'\cap V$. 
We set $x_0'=\sigma^{l(x_0)}(x_0)$ 
which satisfies $l(x_0')=0$. 
Set $V'=\sigma^{l(x_0)}(V)\cap X'$ 
which is an open neighborhood of $x_0'\in X'$ 
such that all $x'\in V'$ satisfies $l(x')=0$. 
We have $(x_0',\gamma_0)\notin Y$ because $(x_0,\gamma_0)\notin Y$. 
Since $Y$ is closed, 
we can find an open neighborhood $W'$ of $x_0'\in X'$  
and an open neighborhood $Z$ of $\gamma_0\in\T$ such that 
$(W'\times Z)\cap Y=\emptyset$. 
We may assume that $W' \subset V'$. 
For each $x\in W'$, 
we have $Y_x\cap Z=\emptyset$ and $\zeta_{p(x)}Y_x=Y_x$. 
Hence $\sup_{x\in W'}p(x)<\infty$. 
Thus we can find $N\in\N$ 
such that $\sigma^N(x)=x$ for all $x\in W'$ 
(recall that $l(x)=0$ for $x\in W'$). 
Let us set $W=\bigcup_{k=0}^{N-1}\sigma^k(W')$ 
which is an open subset of $X'$ satisfying $\sigma(W)=W$, 
and we denote by $\sigma_W\colon W\to W$ 
the restriction of $\sigma$ to $W$, 
which is a homeomorphism satisfying 
$\sigma_W^N=\id_{W}$. 
By the condition (ii) in Definition~\ref{Def:adm}, 
we have 
$(W\times Z)\cap Y=\emptyset$. 

For $f\in C_0(W)$ and a negative integer $n$, 
we define $t^{n}(f)\in C^*(\Sigma)$ 
by $t^{n}(f)=t^{-n}(\overline{f}\circ\sigma_W^{-n})^*$. 

\begin{lemma}\label{Lem:tnf1}
Let $f\in C_0(W)$. 
For $(x,\gamma)\in X'\times\T$, $y\in \Orb(x)$ and $n\in\Z$,
we have 
\begin{equation*}
\pi_{(x,\gamma)}\big(t^{n}(f)\big)\delta_y
=\begin{cases}
\gamma_0^n f(\sigma_W^{-n}(y))\delta_{\sigma_W^{-n}(y)}
&\text{if $y\in W$}\\
0&\text{otherwise}.
\end{cases}
\end{equation*}
\end{lemma}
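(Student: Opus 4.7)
The plan is to verify the formula case by case on the sign of $n$, reducing everything to the explicit action formulas established in Lemma~\ref{compute}. (Note that the $\gamma_0^n$ in the statement should read $\gamma^n$, matching the representation $\pi_{(x,\gamma)}$; the argument below produces $\gamma^n$.)

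The preliminary observation is that since $\sigma(W)=W$ and $\sigma_W\colon W\to W$ is a homeomorphism with $\sigma_W^N=\id_W$, the set $W$ is contained in $U_n$ for every $n\in\N$, and $\sigma_W^{-n}$ makes sense as a self-homeomorphism of $W$ for every $n\in\Z$. In particular, for $y\in X'$ the equation $\sigma^n(z)=y$ with $z\in W$ has at most one solution (namely $\sigma_W^{-n}(y)$, when this lies in $W$), and since $\sigma^n(W)=W$ such a solution exists if and only if $y\in W$.

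First I would treat $n\geq 0$. For $n=0$ the claim is immediate from $\pi_{(x,\gamma)}(t^0(f))\delta_y=f(y)\delta_y$ and the fact that $f$ vanishes outside $W$. For $n\geq 1$, $f\in C_0(W)\subset C_0(U_n)$, and Lemma~\ref{compute} yields
\[
\pi_{(x,\gamma)}(t^n(f))\delta_y=\gamma^n\sum_{\sigma^n(z)=y}f(z)\delta_z.
\]
Since $f$ is supported in $W$, the preliminary observation reduces the sum to the single term $z=\sigma_W^{-n}(y)$ when $y\in W$, and gives $0$ otherwise, which is exactly the claim.

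Next I would treat $n<0$. Setting $m=-n>0$, the definition gives $t^n(f)=t^m(\overline{f}\circ\sigma_W^m)^*$, with $g:=\overline{f}\circ\sigma_W^m\in C_0(W)\subset C_0(U_m)$. The adjoint formula in Lemma~\ref{compute} gives
\[
\pi_{(x,\gamma)}(t^m(g))^*\delta_y=\begin{cases}\gamma^{-m}\overline{g}(y)\delta_{\sigma^m(y)}&\text{if $y\in U_m$,}\\0&\text{otherwise.}\end{cases}
\]
Since $\overline{g}(y)=f(\sigma_W^m(y))$ vanishes unless $y\in W$, and since $\sigma^m(y)=\sigma_W^m(y)=\sigma_W^{-n}(y)$ for $y\in W$, the right-hand side collapses to $\gamma^n f(\sigma_W^{-n}(y))\delta_{\sigma_W^{-n}(y)}$ when $y\in W$ and to $0$ otherwise, as required.

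There is no real obstacle here: the only point demanding care is the bookkeeping of the index $\sigma_W^{-n}$ for negative $n$, where one must use both $\sigma_W^N=\id_W$ and the fact that on $W$ the global map $\sigma^m$ and the restricted map $\sigma_W^m$ agree. Once these identifications are made, both cases follow directly from Lemma~\ref{compute}.
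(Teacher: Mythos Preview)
Your proof is correct and follows essentially the same approach as the paper: both split into the cases $n\geq 0$ and $n<0$, invoke Lemma~\ref{compute} for the explicit action formulas, and then use that $f$ is supported in $W$ together with the bijectivity of $\sigma_W$ on $W$ to collapse the sum (respectively, identify the adjoint term). You also correctly flag the typo $\gamma_0^n$ versus $\gamma^n$, which the paper's own proof carries over from the statement.
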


\begin{proof}
We first consider the case $n\geq 0$. 
By Lemma \ref{compute}, 
we have 
\[
\pi_{(x,\gamma)}(t^n(f))\delta_y
=\gamma_0^n\sum_{\sigma^n(z)=y}f(z)\delta_{z}.
\]
We have $f(z)\neq 0$ only when $z\in W$. 
There exists $z\in W$ with $\sigma^n(z)=y$ 
only when $y\in W$, 
and in this case $z=\sigma_W^{-n}(y)$ 
is the only element in $W$ satisfying $\sigma^n(z)=y$. 
This proves the case $n\geq 0$. 
Next, we consider the case $n<0$. 
By Lemma \ref{compute}, 
we have 
\[
\pi_{(x,\gamma)}\big(t^{n}(f)\big)\delta_y
=\pi_{(x,\gamma)}\big(t^{-n}(\overline{f}\circ\sigma_W^{-n})^*\big)\delta_y
=\begin{cases}
\gamma_0^{n}(f\circ\sigma_W^{-n})(y)
\delta_{\sigma^{-n}(y)}& \text{if $y\in U_n$,}\\
0 & \text{otherwise.}
\end{cases}
\]
When $y \notin W$, we have $(f\circ\sigma_W^{-n})(y)=0$. 
This proves the case $n<0$. 
\end{proof}

Choose $f\in C_0(W)$ with $f\geq 0$ and $f(x_0')\neq 0$ 
and set 
\[
f_0=\frac{1}{n}\sum_{k=0}^{n-1}f\circ \sigma_W^{k}.
\]
Then we have $f_0 \geq 0$, $f_0(x_0')\neq 0$ and 
$f_0\circ \sigma_W=f_0$. 

Take $(x,\gamma)\in X'\times\T$ and $y\in \Orb(x)$ with $y \in W$. 
For $k=0,1,\ldots,p(y)-1$, 
we set $\xi_{y,k} \in H_{x}$ by 
\[
\xi_{y,k} := \frac{1}{\sqrt{p(y)}}\sum_{j=0}^{p(y)-1} 
\zeta_{p(y)}^{jk}\delta_{\sigma_W^j(y)}. 
\]
Then $\{\xi_{y,k}\}_{k=0}^{p(y)-1}$ is a basis of 
the span of $\{\delta_{\sigma_W^k(y)}\}_{k=0}^{p(y)-1}$. 

In these situation, we have the following. 

\begin{lemma}\label{Lem:tnf2}
For $k=0,1,\ldots,p(y)-1$ and $n\in\Z$, we have 
\begin{equation*}
\pi_{(x,\gamma)}\big(t^{n}(f_0)\big)\xi_{y,k}
=(\gamma\zeta_{p(y)}^k)^{n}f_0(y)\xi_{y,k}.
\end{equation*}
\end{lemma}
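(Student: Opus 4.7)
The plan is a direct computation starting from Lemma~\ref{Lem:tnf1} applied to each basis vector $\delta_{\sigma_W^j(y)}$ appearing in $\xi_{y,k}$, then exploiting the $\sigma_W$-invariance $f_0\circ\sigma_W=f_0$ to factor out $f_0(y)$ and reindexing the sum cyclically modulo $p(y)$ to recover $\xi_{y,k}$ up to a root-of-unity scalar.

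First I would note that since $y\in W$ and $\sigma_W\colon W\to W$ is a homeomorphism with $\sigma_W^N=\id_W$, every point $\sigma_W^j(y)$ lies again in $W$, so Lemma~\ref{Lem:tnf1} (with $\gamma$ in place of the $\gamma_0$ appearing there) gives
\[
\pi_{(x,\gamma)}\big(t^{n}(f_0)\big)\delta_{\sigma_W^j(y)}
=\gamma^{n}f_0\big(\sigma_W^{j-n}(y)\big)\,\delta_{\sigma_W^{j-n}(y)}.
\]
Because $f_0\circ\sigma_W=f_0$, the value $f_0(\sigma_W^{j-n}(y))$ equals $f_0(y)$ independently of $j$ and $n$, which is the whole point of replacing the original $f$ by its $\sigma_W$-average $f_0$.

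Next I would expand
\[
\pi_{(x,\gamma)}\big(t^{n}(f_0)\big)\xi_{y,k}
=\frac{\gamma^{n}f_0(y)}{\sqrt{p(y)}}\sum_{j=0}^{p(y)-1}\zeta_{p(y)}^{jk}\,\delta_{\sigma_W^{j-n}(y)}
\]
and perform the substitution $j'\equiv j-n\pmod{p(y)}$. This is legitimate because $y$ has period exactly $p(y)$ with $l(y)=0$ (all points of $W$ have $l=0$ by construction of $W'$ and $W$), so $\sigma_W^{p(y)}(y)=y$ and the sum over $j\in\{0,\dots,p(y)-1\}$ is a sum over $\Z/p(y)\Z$. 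After reindexing, the factor $\zeta_{p(y)}^{jk}=\zeta_{p(y)}^{(j'+n)k}$ yields a global scalar $\zeta_{p(y)}^{nk}$ pulled outside the sum, and what remains inside is exactly $\sqrt{p(y)}\,\xi_{y,k}$. Combining these scalars gives $(\gamma\zeta_{p(y)}^k)^n f_0(y)\xi_{y,k}$, which is the claimed identity.

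There is no real obstacle here; the only points to keep track of are (a) that Lemma~\ref{Lem:tnf1} applies to all $\sigma_W^j(y)\in W$ uniformly, (b) that $f_0\circ\sigma_W=f_0$ really does kill the $j$-dependence of the coefficient, and (c) that the cyclic reindexing is valid, which rests on $\sigma_W^{p(y)}(y)=y$ (a consequence of $y\in W$ having $l(y)=0$ and period $p(y)$). The calculation then works equally well for positive, zero, and negative $n$, since Lemma~\ref{Lem:tnf1} is stated for all $n\in\Z$.
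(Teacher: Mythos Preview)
Your proof is correct and follows essentially the same approach as the paper: apply Lemma~\ref{Lem:tnf1} to each $\delta_{\sigma_W^j(y)}$, use $f_0\circ\sigma_W=f_0$ to pull out $f_0(y)$, and reindex the sum modulo $p(y)$ to recover $\xi_{y,k}$ with the scalar $(\gamma\zeta_{p(y)}^k)^n$. Your remark that the $\gamma_0$ in the displayed formula of Lemma~\ref{Lem:tnf1} should read $\gamma$ is also correct.
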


\begin{proof}
By Lemma~\ref{Lem:tnf1}, we have 
\begin{align*}
\pi_{(x,\gamma)}\big(t^{n}(f_0)\big)\xi_{y,k}
&=\frac{1}{\sqrt{p(y)}}\sum_{j=0}^{p(y)-1} 
\zeta_{p(y)}^{jk}\pi_{(x,\gamma)}\big(t^{n}(f_0)\big)\delta_{\sigma_W^j(y)}\\
&=\frac{1}{\sqrt{p(y)}}\sum_{j=0}^{p(y)-1} 
\zeta_{p(y)}^{jk}\gamma^{n}(f_0\circ\sigma_W^{-n})(\sigma_W^j(y))
\delta_{\sigma_W^{-n}(\sigma_W^j(y))}\\
&=\gamma^{n}\frac{1}{\sqrt{p(y)}}\sum_{j=0}^{p(y)-1} 
\zeta_{p(y)}^{jk}f_0(y)\delta_{\sigma_W^{j-n}(y)}\\
&=\gamma^{n}f_0(y)\frac{1}{\sqrt{p(y)}}\sum_{j=0}^{p(y)-1} 
\zeta_{p(y)}^{(j+n)k}\delta_{\sigma_W^{j}(y)}\\
&=(\gamma\zeta_{p(y)}^k)^{n}f_0(y)\xi_{y,k}. \qedhere
\end{align*}
\end{proof}

\begin{lemma}\label{Lem:pf}
For a trigonometric polynomial $q(z)=\sum_{n=-N}^N\alpha_nz^n$, 
we set $q(f_0) =\sum_{n=-N}^N\alpha_nt^n(f_0)$. 
Then for $k=0,1,\ldots,p(y)-1$, we have 
\begin{equation*}
\pi_{(x,\gamma)}\big(q(f_0)\big)\xi_{y,k}
=q(\gamma\zeta_{p(y)}^k)f_0(y)\xi_{y,k}. 
\end{equation*}
\end{lemma}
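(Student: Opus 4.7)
The plan is straightforward: the statement is a direct linearity consequence of Lemma~\ref{Lem:tnf2}. First I would observe that by the definition $q(f_0) = \sum_{n=-N}^{N}\alpha_n t^n(f_0)$, the operator $\pi_{(x,\gamma)}(q(f_0))$ is the finite linear combination $\sum_{n=-N}^{N}\alpha_n \pi_{(x,\gamma)}(t^n(f_0))$, since $\pi_{(x,\gamma)}$ is a $*$-homomorphism and hence linear.

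Next I would apply Lemma~\ref{Lem:tnf2} termwise: for each $n \in \{-N, \dots, N\}$, that lemma gives
\[
\pi_{(x,\gamma)}\big(t^n(f_0)\big)\xi_{y,k} = (\gamma\zeta_{p(y)}^k)^n f_0(y)\,\xi_{y,k}.
\]
Summing these equalities with the coefficients $\alpha_n$ yields
\[
\pi_{(x,\gamma)}\big(q(f_0)\big)\xi_{y,k}
= \sum_{n=-N}^{N}\alpha_n (\gamma\zeta_{p(y)}^k)^n f_0(y)\,\xi_{y,k}
= q(\gamma\zeta_{p(y)}^k) f_0(y)\,\xi_{y,k},
\]
where the last equality is just the definition of the trigonometric polynomial $q$ evaluated at $\gamma\zeta_{p(y)}^k$.

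There is essentially no obstacle here: all the work has been done in Lemma~\ref{Lem:tnf2}, which already verified that each $\xi_{y,k}$ is an eigenvector for every $\pi_{(x,\gamma)}(t^n(f_0))$ with eigenvalue $(\gamma\zeta_{p(y)}^k)^n f_0(y)$. The present lemma is the formal statement that the assignment $z^n \mapsto t^n(f_0)$ extends linearly to trigonometric polynomials and behaves as functional calculus on the one-dimensional invariant subspaces $\C\xi_{y,k}$. So this proof is a one-line calculation, and the only thing worth emphasizing is that the sum is finite, so no convergence issues arise.
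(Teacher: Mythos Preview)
Your proof is correct and matches the paper's approach exactly: the paper's proof is the single sentence ``This follows from Lemma~\ref{Lem:tnf2},'' and you have simply written out the obvious linearity argument that this sentence encodes.
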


\begin{proof}
This follows from Lemma~\ref{Lem:tnf2}. 
\end{proof}

Take $g \in C_c(Z) \subset \C(\T)$ with $g(\gamma_0)\neq 0$. 
Choose a sequence $q_1,q_2,\ldots$ of trigonometric polynomials 
converging to $g$ uniformly on $\T$. 
We have the following. 

\begin{lemma}\label{Lem:gf}
The sequence $q_1(f_0),q_2(f_0),\ldots$ converges to an 
element $a \in C^*(\Sigma')$. 
We also have 
\begin{equation*}
\pi_{(x,\gamma)}\big(a\big)\xi_{y,k}
=g(\gamma_0\zeta_{p(y)}^k)f_0(y)\xi_{y,k}. 
\end{equation*}
for $(x,\gamma)\in X'\times\T$, $y\in \Orb(x) \cap W$ and 
$k=0,1,\ldots,p(y)-1$. 
\end{lemma}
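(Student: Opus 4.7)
The plan is to promote the algebraic identity of Lemma~\ref{Lem:pf} to a convergence statement using a uniform norm estimate $\|q(f_0)\| \le \|f_0\|_\infty \|q\|_\infty$ valid for every trigonometric polynomial $q$; once that is in hand, uniform convergence $q_n \to g$ on $\T$ will force $\{q_n(f_0)\}$ to be Cauchy, and the formula for $\pi_{(x,\gamma)}(a)\xi_{y,k}$ will follow by passing to the limit.

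First I would establish the norm bound. By Corollary~\ref{Cor:inter} applied to the SGDS $\Sigma'$, the intersection of the ideals $P_{(x,\gamma)}$ with $(x,\gamma)\in X'\times\T$ is zero in $C^*(\Sigma')$, so the family $\{\pi_{(x,\gamma)}\}_{(x,\gamma)\in X'\times\T}$ is jointly faithful and the $C^*(\Sigma')$-norm agrees with $\sup_{(x,\gamma)}\|\pi_{(x,\gamma)}(\cdot)\|$. It therefore suffices to bound each $\|\pi_{(x,\gamma)}(q(f_0))\|$ by $\|f_0\|_\infty \|q\|_\infty$.

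Second, fix $(x,\gamma)\in X'\times\T$ and analyse the action of $q(f_0)=\sum_{n=-N}^{N}\alpha_n t^n(f_0)$ on $H_x$. By Lemma~\ref{Lem:tnf1}, every $t^n(f_0)$ annihilates $\delta_z$ for $z\in \Orb(x)\setminus W$, so $\pi_{(x,\gamma)}(q(f_0))$ vanishes on the closed span of these vectors. The complementary subspace is spanned by $\{\delta_z\}_{z\in\Orb(x)\cap W}$; since $\sigma_W$ is a homeomorphism of $W$ with $\sigma_W^N=\id_W$, this subspace splits as an orthogonal sum of finite-dimensional subspaces, one for each $\sigma_W$-orbit $\{\sigma_W^j(y)\}_{j=0}^{p(y)-1}$ in $\Orb(x)\cap W$, and each such subspace is invariant under $\pi_{(x,\gamma)}(t^n(f_0))$ again by Lemma~\ref{Lem:tnf1}. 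On the orbit through $y$, Lemma~\ref{Lem:pf} diagonalises $\pi_{(x,\gamma)}(q(f_0))$ in the basis $\{\xi_{y,k}\}_{k=0}^{p(y)-1}$ with eigenvalues $q(\gamma\zeta_{p(y)}^k)f_0(y)$, whose moduli are bounded by $\|q\|_\infty\|f_0\|_\infty$. This proves the estimate.

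Third, from $\|q_n(f_0)-q_m(f_0)\|\le \|f_0\|_\infty\|q_n-q_m\|_\infty$ and uniform convergence of $q_n$ to $g$, the sequence $\{q_n(f_0)\}$ is Cauchy and converges to some $a\in C^*(\Sigma')$. Applying Lemma~\ref{Lem:pf} to each $q_n$, taking $n\to\infty$, and using continuity of $\pi_{(x,\gamma)}$ together with $q_n(\gamma\zeta_{p(y)}^k)\to g(\gamma\zeta_{p(y)}^k)$ gives the asserted formula for $\pi_{(x,\gamma)}(a)\xi_{y,k}$. The only nontrivial step is the norm bound, and within it the key input is the joint faithfulness of the $\pi_{(x,\gamma)}$; everything else is a direct consequence of the diagonalisation already recorded in Lemma~\ref{Lem:pf}.
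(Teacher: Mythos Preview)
Your argument is correct and complete. The key norm bound $\|q(f_0)\|\le\|f_0\|_\infty\|q\|_\infty$ follows exactly as you say: joint faithfulness of the $\pi_{(x,\gamma)}$ (Corollary~\ref{Cor:inter} applied to $\Sigma'$) reduces matters to each $H_x$, where Lemma~\ref{Lem:tnf1} gives the block decomposition $H_x=H_W\oplus H_{W^c}$ with the operator vanishing on $H_{W^c}$ and preserving $H_W$, and Lemma~\ref{Lem:pf} diagonalises the action on each finite $\sigma_W$-orbit with eigenvalues of modulus at most $\|q\|_\infty\|f_0\|_\infty$.

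Your route is genuinely different from the paper's. The paper introduces the crossed product $B=C_0(W)\rtimes_{\sigma_W}\Z$, builds a \shom $\iota\colon B\to C^*(\Sigma')$ sending $f_0u^n\mapsto t^n(f_0)$, and then observes that $f_0q_k(u)\to f_0g(u)$ in $B$, so $q_k(f_0)=\iota(f_0q_k(u))$ converges to $a=\iota(f_0g(u))$. This is more structural: it identifies $a$ intrinsically and the convergence comes for free from the crossed-product norm. Your approach is more elementary and self-contained: it avoids any auxiliary algebra and instead proves the needed estimate directly from the representation theory already developed, at the cost of the extra step of diagonalising orbit by orbit. Both reach the eigenvector formula in the same way, by passing Lemma~\ref{Lem:pf} to the limit.
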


\begin{proof}
Let $B=C^*(W,\sigma_W)=C_0(W)\rtimes_{\sigma_W}\Z$ 
be the universal \Ca generated 
by the products of the elements in $C_0(W)\subset B$ 
and a unitary $u\in M(B)$ which satisfy 
$u fu^*=f\circ \sigma_W$ for $f\in C_0(W)$ 
where $M(B)$ is the multiplier algebra of $B$. 
By the universality, 
there exists a \shom $\iota\colon B\to C^*(\Sigma')$ 
such that $\iota(f)=t^0(f)$ and $\iota(fu)=t^1(f)$. 
(We know that $\iota$ is injective 
by Proposition~\ref{Prop:GIUT}.) 
For $n \in \N$, we have $\iota(f_0u^n)=t^n(f_0)$. 
To see this for $n \geq 0$, choose $\xi_1,\ldots,\xi_n\in C_c(U)$ 
such that 
\[
f_0(x)=\xi_1(x)\xi_2(\sigma(x))\cdots \xi_n(\sigma^{n-1}(x))
\] 
for all $x \in U_n$ by Lemma~\ref{Lem:fctn2}, 
and compute 
\begin{align*}
\iota(f_0u^n)
&=\iota(\xi_1 (\xi_2\circ\sigma) \cdots (\xi_n\circ \sigma^{n-1}) u^n)\\
&=\iota(\xi_1 u \xi_2 u \cdots \xi_{n-1} u \xi_n u)\\
&=\iota(\xi_1 u) \iota(\xi_2 u) \cdots \iota(\xi_{n-1} u)\iota( \xi_n u)\\
&=t_1(\xi_1)t_1(\xi_2)\cdots t_1(\xi_{n-1})t_1(\xi_n)\\
&=t^n(f_0). 
\end{align*}
For $n \leq -1$, we also have $\iota(f_0u^n)=t^n(f_0)$ 
because 
\begin{align*}
\iota(f_0u^n)
=\iota(u^{-n}\overline{f_0})^*
&=\iota((\overline{f_0}\circ \sigma_W^{-n}) u^{-n})^*\\
&=t^{-n}(\overline{f_0}\circ \sigma_W^{-n})^*
=t^n(f_0)
\end{align*}
Thus for a trigonometric polynomial $q$, 
we have $\iota(f_0q(u))=q(f_0)$. 
Since $(f_0q_k(u))_k$ converges $f_0g(u)$ in $B$, 
$(q_k(f_0))_k$ converges $\iota(f_0g(u))$ in $C^*(\Sigma')$. 
Thus we get $a = \iota(f_0g(u))$. 
The last equation follows easily from Lemma~\ref{Lem:pf}. 
\end{proof}

\begin{proposition}
Let $a \in C^*(\Sigma')$ be as in Lemma~\ref{Lem:gf}. 
We have $\pi_{(x_0,\gamma_0)}(a)\neq 0$ and 
$\pi_{(x,\gamma)}(a)= 0$ for all $(x,\gamma) \in Y$. 
\end{proposition}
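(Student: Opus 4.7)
The plan is to verify both required properties of $a$ by direct computation on the orthonormal family $\{\xi_{y,k}\}$ furnished by Lemma~\ref{Lem:gf}, using admissibility of $Y$ only in the ``vanishing'' direction.

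For the nonvanishing of $\pi_{(x_0,\gamma_0)}(a)$, I would simply exhibit a vector on which it acts nontrivially. The natural choice is $y=x_0'=\sigma^{l(x_0)}(x_0)\in W'\subset W$ with $k=0$; since $x_0$ is periodic (from $Y_{x_0}\neq\emptyset,\T$), so is $x_0'$, and Lemma~\ref{Lem:gf} gives
\[
\pi_{(x_0,\gamma_0)}(a)\xi_{x_0',0}=g(\gamma_0)\,f_0(x_0')\,\xi_{x_0',0}.
\]
This is nonzero because $g(\gamma_0)\neq 0$ by construction and $f_0(x_0')>0$ (the nonnegative sum defining $f_0$ contains the term $\tfrac{1}{N}f(x_0')>0$).

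For the vanishing of $\pi_{(x,\gamma)}(a)$ when $(x,\gamma)\in Y$, I would fix such a pair and show $\pi_{(x,\gamma)}(a)\delta_y=0$ for every $y\in\Orb(x)$, splitting on whether $y$ lies in $W$. If $y\notin W$, then Lemma~\ref{Lem:tnf1} gives $\pi_{(x,\gamma)}(t^n(f_0))\delta_y=0$ for every $n\in\Z$, so by linearity and norm-continuity $\pi_{(x,\gamma)}(a)\delta_y=0$ as well. If $y\in W$, then $y$ is periodic with $l(y)=0$ and $p(y)\mid N$, so $\delta_y$ lies in the span of $\{\xi_{y,k}\}_{k=0}^{p(y)-1}$; by Lemma~\ref{Lem:gf} it suffices to show $g(\gamma\zeta_{p(y)}^k)=0$ for every $k$, which (since $\supp g\subset Z$) reduces to showing $\gamma\zeta_{p(y)}^k\notin Z$.

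The heart of the argument, and the only place where admissibility is genuinely used, is this last reduction, and I expect it to be the step that deserves the most care. Iterating condition~(ii) on both sides of an orbit relation $\sigma^n(y)=\sigma^m(x)$ gives $Y_y=Y_x$, so $\gamma\in Y_y$ and in particular $Y_y\neq\emptyset$; moreover $Y_y\neq\T$, because otherwise $(y,\gamma')\in(W\times Z)\cap Y$ for any $\gamma'\in Z$, contradicting $(W\times Z)\cap Y=\emptyset$. Condition~(iii) then forces $\zeta_{p(y)}Y_y=Y_y$, so $\gamma\zeta_{p(y)}^k\in Y_y$ for all $k$, i.e.\ $(y,\gamma\zeta_{p(y)}^k)\in Y$; since $y\in W$ and $(W\times Z)\cap Y=\emptyset$, we conclude $\gamma\zeta_{p(y)}^k\notin Z$, as required. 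The main pitfall to avoid is forgetting that $\sigma$-invariance of $Y_\bullet$ propagates in both directions across the orbit relation, but this is immediate from applying (ii) to each side of $\sigma^n(y)=\sigma^m(x)$ separately.
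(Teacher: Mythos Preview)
Your proof is correct and follows essentially the same approach as the paper's: exhibit nonvanishing on $\xi_{x_0',0}$, then for $(x,\gamma)\in Y$ split on whether $y\in W$, using Lemma~\ref{Lem:tnf1} for $y\notin W$ and admissibility plus $(W\times Z)\cap Y=\emptyset$ for $y\in W$. Your version is in fact slightly more careful than the paper's in two places: you use $x_0'\in W$ rather than $x_0$ (which need not lie in $W$ when $l(x_0)>0$), and you explicitly check the hypothesis $Y_y\neq\emptyset,\T$ before invoking condition~(iii), whereas the paper applies it tacitly.
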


\begin{proof}
We have $\pi_{(x_0,\gamma_0)}(a)\neq 0$ because 
$\pi_{(x_0,\gamma_0)}(a)\xi_{x_0,0}
=g(\gamma_0)f_0(x_0)\xi_{x_0,0}\neq 0$ 
by Lemma~\ref{Lem:gf}. 

Take $(x,\gamma) \in Y$. 
Take $y \in \Orb(x)$. 
If $y \notin W$, then we can see 
$\pi_{(x,\gamma)}(a)\delta_y=0$ by Lemma~\ref{Lem:tnf1}. 
Suppose $y \in W$. 
By the condition (ii) in Definition~\ref{Def:adm}, 
we have $(y,\gamma) \in Y$. 
By the condition (iii) in Definition~\ref{Def:adm}, 
we have $(y,\zeta_{p(y)}^k\gamma) \in Y$ 
for $k=0,1,\ldots, p(y)-1$. 
Hence $\zeta_{p(y)}^k\gamma \notin Z$ for $k=0,1,\ldots, p(y)-1$. 
By Lemma~\ref{Lem:gf} 
we have 
$\pi_{(x,\gamma)}(a)\xi_{y,k}=0$ for $k=0,1,\ldots,p(y)-1$. 
Hence we get $\pi_{(x,\gamma)}(a)\delta_y=0$. 
These show that $\pi_{(x,\gamma)}(a)=0$. 
\end{proof}

We have shown the following. 

\begin{proposition}\label{Prop:YIY}
For an admissible subset $Y$ 
of $X\times\T$, 
we have $Y_{I_Y}=Y$. 
\end{proposition}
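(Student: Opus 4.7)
The inclusion $Y_{I_Y}\supset Y$ is immediate from the definitions, so the whole task is to prove $Y_{I_Y}\subset Y$, i.e.\ that for each $(x_0,\gamma_0)\notin Y$ there exists some $a\in I_Y$ with $a\notin P_{(x_0,\gamma_0)}$. The plan is to separate two cases according to the set $X':=\{x\in X\mid Y_x\neq\emptyset\}$, which is closed and $\sigma$-invariant by the closedness and condition (ii) in Definition~\ref{Def:adm}. If $(x_0,\gamma_0)\notin X'\times\T$, I would simply pick $f\in C_0(X)$ with $f(x_0)=1$ and $f|_{X'}=0$: then $\pi_{(x,\gamma)}(t^0(f))=0$ for every $(x,\gamma)\in Y$, so $t^0(f)\in I_Y$, while $\pi_{(x_0,\gamma_0)}(t^0(f))\delta_{x_0}=\delta_{x_0}\neq 0$.

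For the remaining case $(x_0,\gamma_0)\in(X'\times\T)\setminus Y$, since the representations $\pi_{(x,\gamma)}$ for $x\in X'$ all factor through the quotient $C^*(\Sigma)\to C^*(\Sigma')$ associated to the SGDS $\Sigma'=(X',\sigma|_{U\cap X'})$, it suffices to produce the separating element inside $C^*(\Sigma')$. Here is where admissibility really gets used. Since $Y_{x_0}\neq\emptyset,\T$, condition (iii) in Definition~\ref{Def:adm} forces $x_0$ to be periodic and gives a neighborhood $V$ on which $l(\cdot)=l(x_0)$. Replacing $x_0$ by $x_0':=\sigma^{l(x_0)}(x_0)$ (with $l(x_0')=0$) and using closedness of $Y$, I can find an open $W'\ni x_0'$ in $X'$ together with an open arc $Z\ni \gamma_0$ in $\T$ such that $(W'\times Z)\cap Y=\emptyset$ and every $x\in W'$ has $l(x)=0$ with uniformly bounded period. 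Then $W:=\bigcup_{k\geq 0}\sigma^k(W')$ is open, $\sigma$-invariant, and $\sigma_W:=\sigma|_W$ is a homeomorphism with $\sigma_W^N=\id_W$ for some $N$, while condition (ii) upgrades the disjointness to $(W\times Z)\cap Y=\emptyset$.

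The separating element will be built by imitating a crossed-product functional calculus inside $C^*(\Sigma')$. Define $t^n(f)$ for $f\in C_0(W)$ and negative $n$ via $t^{-n}(\overline f\circ\sigma_W^{-n})^*$, choose $f\in C_0(W)$ with $f(x_0')\neq 0$ and symmetrize to $f_0:=\tfrac{1}{N}\sum_{k=0}^{N-1}f\circ\sigma_W^k$ so that $f_0\circ\sigma_W=f_0$, and for each periodic orbit $\{\sigma_W^j(y)\}$ of period $p(y)$ in $W$ take the Fourier basis $\xi_{y,k}$. A direct computation shows $\pi_{(x,\gamma)}(t^n(f_0))\xi_{y,k}=(\gamma\zeta_{p(y)}^k)^n f_0(y)\xi_{y,k}$, so for any trigonometric polynomial $q$ we obtain $\pi_{(x,\gamma)}(q(f_0))\xi_{y,k}=q(\gamma\zeta_{p(y)}^k)f_0(y)\xi_{y,k}$. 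Picking $g\in C_c(Z)$ with $g(\gamma_0)\neq 0$, approximating by trigonometric polynomials, and identifying the limit with $\iota(f_0 g(u))$ inside the image of the canonical embedding $C_0(W)\rtimes_{\sigma_W}\Z\hookrightarrow C^*(\Sigma')$, I obtain a well-defined $a\in C^*(\Sigma')$ with the same spectral formula. Then $\pi_{(x_0,\gamma_0)}(a)\xi_{x_0',0}=g(\gamma_0)f_0(x_0')\xi_{x_0',0}\neq 0$; conversely, for $(x,\gamma)\in Y$ and $y\in\Orb(x)\cap W$, condition (iii) (the $\zeta_{p(y)}$-invariance of $Y_y$) together with (ii) forces $\zeta_{p(y)}^k\gamma\notin Z$ for all $k$, so $g$ vanishes at each relevant spectral value and $\pi_{(x,\gamma)}(a)=0$.

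The key obstacle I expect is the construction in the third paragraph: extracting the homeomorphism $\sigma_W$ of finite uniform period from the admissibility conditions, and then making sense of the would-be crossed-product element $f_0 g(u)$ inside $C^*(\Sigma')$ (where there is no globally defined unitary $u$). The trick is that $f_0$ is already $\sigma_W$-invariant, so the formal polynomials $f_0 u^n$ do correspond to bona fide elements $t^n(f_0)\in C^*(\Sigma')$; the norm estimate then comes from the injectivity of $C_0(W)\rtimes_{\sigma_W}\Z\hookrightarrow C^*(\Sigma')$, which is afforded by the gauge-invariant uniqueness theorem (Proposition~\ref{Prop:GIUT}) quoted in the appendix.
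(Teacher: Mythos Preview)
Your proposal is correct and follows essentially the same approach as the paper: the same reduction to $X'$, the same passage to $x_0'=\sigma^{l(x_0)}(x_0)$ via condition~(iii), the same construction of the finite-period patch $W$ with $\sigma_W^N=\id_W$, and the same crossed-product element $a=\iota(f_0 g(u))$ diagonalized in the Fourier basis $\xi_{y,k}$. Two minor remarks: you should take $f\geq 0$ before symmetrizing (otherwise the average $f_0$ might vanish at $x_0'$), and for the convergence of $q_k(f_0)$ you only need the \emph{existence} of the $*$-homomorphism $\iota\colon C_0(W)\rtimes_{\sigma_W}\Z\to C^*(\Sigma')$, not its injectivity (the paper notes injectivity only parenthetically).
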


\begin{theorem}\label{MainThm}
The set of ideals of $C^*(\Sigma)$ corresponds bijectively 
to the set of all admissible subsets of $X\times\T$ 
via the maps $I\mapsto Y_I$ and $Y\mapsto I_Y$. 
\end{theorem}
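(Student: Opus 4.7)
The plan is to observe that Theorem~\ref{MainThm} is an immediate synthesis of the three propositions established in Sections~\ref{Sec:IYI}, \ref{Sec:YIad}, and \ref{Sec:YIY}. All of the substantive work has already been done; the theorem just assembles it.

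First I would verify that both maps are well-defined with the claimed codomains. The map $I \mapsto Y_I$ lands in the collection of admissible subsets of $X \times \T$ by Proposition~\ref{Prop:YIinv}. The map $Y \mapsto I_Y$ is defined by intersection of primitive ideals and thus produces an ideal for every subset $Y \subseteq X \times \T$ (a fortiori, for every admissible $Y$). So we have two well-defined maps between the set of ideals of $C^*(\Sigma)$ and the set of admissible subsets of $X \times \T$.

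Next I would check that these maps are mutually inverse. The composition $I \mapsto Y_I \mapsto I_{Y_I}$ equals the identity on ideals by Proposition~\ref{Prop:IYI}, which established $I_{Y_I} = I$ for every ideal $I$ using Corollary~\ref{Cor:inter} and the fact that every ideal of a \Ca is the intersection of the primitive ideals containing it. The composition $Y \mapsto I_Y \mapsto Y_{I_Y}$ equals the identity on admissible subsets by Proposition~\ref{Prop:YIY}. This gives the claimed bijection.

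Because the preparatory work is already complete, there is no remaining obstacle at this step: the proof is a single sentence invoking Proposition~\ref{Prop:YIinv}, Proposition~\ref{Prop:IYI}, and Proposition~\ref{Prop:YIY}. The real difficulty of the theorem is packed into those propositions, with the heaviest lifting in Section~\ref{Sec:YIY}, where one had to construct, for each $(x_0,\gamma_0) \in (X' \times \T) \setminus Y$, an explicit element $a \in C^*(\Sigma')$ separating $\pi_{(x_0,\gamma_0)}$ from all $\pi_{(x,\gamma)}$ with $(x,\gamma) \in Y$ via a functional calculus on $t^n(f_0)$ using the periodicity assured by admissibility condition (iii).
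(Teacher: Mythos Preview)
Your proposal is correct and matches the paper's own proof, which is the single sentence ``This follows from Proposition~\ref{Prop:IYI}, Proposition~\ref{Prop:YIinv} and Proposition~\ref{Prop:YIY}.'' Your additional remarks on well-definedness of the two maps and on where the real work lies are accurate and simply elaborate what the paper leaves implicit.
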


\begin{proof}
This follows from Proposition~\ref{Prop:IYI}, 
Proposition~\ref{Prop:YIinv} and 
Proposition~\ref{Prop:YIY}. 
\end{proof}

\appendix

\section{Uniqueness theorems}\label{Sec:UT}

Let $\Sigma=(X,\sigma)$ be an SGDS. 
By the universality of $C^*(\Sigma)$, 
we have an action $\beta$ of the group 
$\T := \{z \in \C\mid |z|=1\}$ on $C^*(\Sigma)$ 
so that for $z \in \T$ we have 
$\beta_z(t^0(f))=t^0(f)$ for $f \in C_0(X)$ 
and $\beta_z(t^1(\xi))=zt^1(\xi)$ for $\xi \in C_c(U)$. 
This action $\beta$ is called the {\em gauge} action 
(see \cite[Section~4]{K1}). 

\begin{proposition}\label{Prop:GIUT}
Let $A$ be a \CA .
A \shom $\varPhi\colon C^*(\Sigma) \to A$ is injective 
if $\varPhi \circ t^0$ is injective and 
there exists an action of $\T$ on $A$ 
with which and the gauge action on $C^*(\Sigma)$, 
$\varPhi$ is equivariant. 
\end{proposition}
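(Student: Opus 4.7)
My plan is to follow the standard gauge-invariant uniqueness scheme, combining an averaging argument with a filtration analysis of the fixed-point algebra. First, I would use the gauge action to build a faithful conditional expectation $E\colon C^*(\Sigma)\to C^*(\Sigma)^{\beta}$ by setting $E(a)=\int_{\T}\beta_z(a)\,dz$, where $C^*(\Sigma)^{\beta}$ denotes the fixed-point subalgebra. Faithfulness of $E$ is standard: $E(a^*a)=0$ forces $a=0$ because $z\mapsto \varphi(\beta_z(a^*a))$ is continuous and nonnegative for every state $\varphi$. Similarly the given $\T$-action $\gamma$ on $A$ yields a conditional expectation $E'$ onto $A^{\gamma}$, and equivariance of $\varPhi$ gives $\varPhi\circ E=E'\circ\varPhi$. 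Consequently, if $\varPhi(a)=0$ then $\varPhi(E(a^*a))=E'(\varPhi(a^*a))=0$; hence if we know $\varPhi$ is injective on $C^*(\Sigma)^{\beta}$, then $E(a^*a)=0$, so $a=0$ by faithfulness. The problem is thus reduced to showing $\varPhi|_{C^*(\Sigma)^{\beta}}$ is injective.

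Next I would describe $C^*(\Sigma)^{\beta}$ concretely using Lemma~\ref{Lem:cspa}. Since $\beta_z\bigl(t^n(\xi)t^m(\eta)^*\bigr)=z^{n-m}t^n(\xi)t^m(\eta)^*$, averaging shows that $C^*(\Sigma)^{\beta}$ is the closure of the linear span of the elements $t^n(\xi)t^n(\eta)^*$ with $n\in\N$, $\xi,\eta\in C_c(U_n)$. Let $B_n$ denote the closed linear span of $\{t^k(\xi)t^k(\eta)^*\mid 0\leq k\leq n,\ \xi,\eta\in C_c(U_k)\}$; using Lemma~\ref{Lem:t^*t=t} and Lemma~\ref{Lem:tt=t} together with Lemma~\ref{Lem:t*t} one verifies that each $B_n$ is a $C^*$-subalgebra, that $B_n\subset B_{n+1}$, and that $C^*(\Sigma)^{\beta}=\overline{\bigcup_n B_n}$. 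It therefore suffices to prove injectivity of $\varPhi$ on each $B_n$ by induction on $n$.

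The base case $n=0$ is the hypothesis that $\varPhi\circ t^0$ is injective on $C_0(X)$. For the inductive step, the key structural fact is that $K_n:=\overline{\mathrm{span}}\{t^n(\xi)t^n(\eta)^*\mid \xi,\eta\in C_c(U_n)\}$ is a closed two-sided ideal of $B_n$ isomorphic to the algebra of compact operators on the right Hilbert $C_0(X)$-module obtained by completing $C_c(U_n)$ with respect to the inner product $\ip{\cdot}{\cdot}_n$; the map $\theta_{\xi,\eta}\mapsto t^n(\xi)t^n(\eta)^*$ is the isomorphism, with the compatibility checks supplied by Lemma~\ref{Lem:t*t}. Moreover, one shows $B_n/K_n$ embeds naturally into $B_{n-1}/K_{n-1}$ (or, more concretely, the composition of the quotient by $K_n$ with a suitable homomorphism factors through $B_{n-1}$). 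Granted this, injectivity of $\varPhi$ on $K_n$ follows from injectivity on $C_0(X)$ because a $*$-homomorphism from $\mathcal{K}(\mathcal{E})$ is injective as soon as its restriction to $\mathcal{K}(\mathcal{E})\cdot \ip{\mathcal{E}}{\mathcal{E}}$ is nonzero on each coefficient ideal, a fact equivalent to injectivity of $\varPhi\circ t^0$; then injectivity on $B_n$ follows from the short exact sequence $0\to K_n\to B_n\to B_n/K_n\to 0$ combined with the inductive hypothesis on $B_{n-1}$.

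The main obstacle will be verifying cleanly that $K_n$ really has the compact-operator structure I claim and that $B_n/K_n$ fits into the quotient picture with $B_{n-1}$, because the absence of second countability prohibits casual use of sequential arguments and forces care with nets and partitions of unity. In practice this is the content of the referenced \cite[Proposition~3.10]{K1}: once the identification $K_n\cong \mathcal{K}(\mathcal{E}_n)$ is set up, the extension-of-ideals step and the inductive limit passage are formal. The rest of the proof is bookkeeping; the conceptual work lives entirely in the filtration analysis of the fixed-point algebra.
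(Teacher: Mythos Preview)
The paper's own proof is a one-line citation to \cite[Theorem~4.5]{K1}; what you have sketched is essentially the argument behind that reference. The reduction to the fixed-point algebra via the two conditional expectations and the intertwining $\varPhi\circ E=E'\circ\varPhi$ is entirely correct, as is the description of $C^*(\Sigma)^\beta$ as the closure of $\bigcup_n B_n$.

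There is, however, a genuine misstatement in your inductive step. The claim that $B_n/K_n$ embeds into $B_{n-1}/K_{n-1}$ is false. Since $B_n=B_{n-1}+K_n$ (sum of a $C^*$-subalgebra and an ideal, hence closed) one has $B_n/K_n\cong B_{n-1}/(B_{n-1}\cap K_n)$, a \emph{quotient} of $B_{n-1}$. Already at $n=1$, where $K_0=B_0=t^0(C_0(X))$, one checks $B_0\cap K_1=t^0(C_0(U))$ and hence $B_1/K_1\cong C_0(X\setminus U)$, whereas $B_0/K_0=0$; so no such embedding exists unless $U=X$. More to the point, the short exact sequence $0\to K_n\to B_n\to B_n/K_n\to 0$ does not by itself reduce injectivity on $B_n$ to injectivity on $K_n$ together with the inductive hypothesis on $B_{n-1}$. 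What is actually needed is the identity $\varPhi(B_{n-1})\cap\varPhi(K_n)=\varPhi(B_{n-1}\cap K_n)$, so that $\varPhi(b)\in\varPhi(K_n)$ with $b\in B_{n-1}$ forces $b\in K_n$; then any $a=b+k\in\ker\varPhi$ lies in $K_n$ and injectivity of $\varPhi|_{K_n}$ finishes. Establishing that intersection identity is exactly where the covariance relation of Definition~\ref{Def:C*S}\,(ii) and an explicit description of $B_{n-1}\cap K_n$ inside the compacts on the $n$-th module are used, and this is the substance of the structural lemmas in \cite{K1}. You are right that this is the main obstacle, but the quotient relation you wrote will not carry the induction as stated.
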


\begin{proof}
This follows from \cite[Theorem~4.5]{K1}. 
\end{proof}

\begin{definition}[{cf.\ \cite[Definition~2.5]{R2}}]\label{Def:esfree}
An SGDS $\Sigma$ is said to be {\em essentially free} 
if the interior of the set $\{x \in X\mid l(x)=0\}$ 
is empty. 
\end{definition}

By Baire's category theorem, 
$\Sigma$ is essentially free if and only if 
the interior of the set $\{x \in X\mid \sigma^n(x)=x\}$ 
is empty for every positive integer $n$. 
Thus this definition coincides with \cite[Definition~2.5]{R2}.

\begin{proposition}\label{Prop:CKUT}
Suppose an SGDS $\Sigma$ is essentially free. 
Let $A$ be a \CA .
A \shom $\varPhi\colon C^*(\Sigma) \to A$ is injective 
if $\varPhi \circ t^0$ is injective
\end{proposition}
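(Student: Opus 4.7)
The plan is to deduce this directly from the Cuntz--Krieger type uniqueness theorem for topological graph \CA s established in \cite{K1}, exactly in parallel with the proof of Proposition~\ref{Prop:GIUT}. After Definition~\ref{Def:C*S} it was already observed that $C^*(\Sigma)$ is canonically isomorphic to the \Ca of the topological graph $E=(X,U,\sigma,\iota)$, with $\iota\colon U\hookrightarrow X$ the inclusion, and that under this isomorphism the \shom $t^0$ corresponds to the canonical embedding of $C_0(X)$ into the graph \CA. Hence the hypothesis that $\varPhi\circ t^0$ is injective translates exactly into the standard hypothesis appearing in the graph-algebraic uniqueness theorem from \cite{K1}.

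The second step is to match the essential freeness condition in Definition~\ref{Def:esfree} with the topological freeness hypothesis required by the Cuntz--Krieger theorem of \cite{K1}. By Baire's category theorem, as noted in the remark following Definition~\ref{Def:esfree}, $\Sigma$ is essentially free if and only if for every positive integer $n$ the interior of $\{x\in X\mid \sigma^n(x)=x\}$ is empty. This is precisely the topological freeness condition for the topological graph $E$ used in \cite{K1}, so the cited theorem applies and directly yields that $\varPhi$ is injective.

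The main obstacle is purely terminological: one needs to verify that the definition of topological freeness (or ``condition (L)'', depending on the formulation) in \cite{K1} is stated in a form directly compatible with Definition~\ref{Def:esfree} via the equivalence above, and that the version of the uniqueness theorem being quoted does not require additional hypotheses such as second countability. Once this bookkeeping has been checked, the proposition reduces to a single citation of the appropriate theorem in \cite{K1}, in exactly the same manner that Proposition~\ref{Prop:GIUT} reduces to \cite[Theorem~4.5]{K1}.
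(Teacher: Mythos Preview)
Your proposal is correct and follows essentially the same approach as the paper's proof: identify $C^*(\Sigma)$ with the topological graph \Ca of $E=(X,U,\sigma,\iota)$, observe that essential freeness of $\Sigma$ coincides with topological freeness of $E$ in the sense of \cite[Definition~5.4]{K1}, and then invoke the Cuntz--Krieger uniqueness theorem \cite[Theorem~5.12]{K1}. The paper's proof is a two-line citation of exactly these results, so your terminological bookkeeping is the only additional content, and it is resolved by those specific references (no second countability is assumed in \cite{K1}).
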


\begin{proof}
An SGDS $\Sigma$ is essentially free 
if and only if the associate topological graph $E=(X,U,\sigma,\iota)$ 
is topologically free in the sense of \cite[Definition~5.4]{K1}. 
Thus this proposition follows from \cite[Theorem~5.12]{K1}. 
\end{proof}

\end{document}